\begin{document}
\title[Pointed Hopf algebras of dimension 16 in characteristic $2$]{On non-connected   pointed Hopf algebras of dimension 16 in characteristic $2$}
\author{Rongchuan Xiong}
\address{Department of Mathematics, Changzhou University, Changzhou 213164, China}
\email{rcxiong@foxmail.com}

\subjclass[2010]{16T05, 16S35, 18D10}
\date{}
\maketitle

\newcommand{\tabincell}[2]{\begin{tabular}{@{}#1@{}}#2\end{tabular}}
\newtheorem{question}{Question}
\newtheorem{defi}{Definition}[section]
\newtheorem{conj}{Conjecture}
\newtheorem{thm}[defi]{Theorem}
\newtheorem{lem}[defi]{Lemma}
\newtheorem{pro}[defi]{Proposition}
\newtheorem{cor}[defi]{Corollary}
\newtheorem{rmk}[defi]{Remark}
\newtheorem{example}{Example}[section]

\theoremstyle{plain}
\newcounter{maint}
\renewcommand{\themaint}{\Alph{maint}}
\newtheorem{mainthm}[maint]{Theorem}

\newcommand{\AdL}{\text{ad}_L\,}
\newcommand{\AdR}{\text{ad}_R\,}
\newcommand{\K}{\mathds{k}}
\newcommand{\A}{\mathcal{A}}
\newcommand{\C}{\mathcal{C}}
\newcommand{\M}{\mathcal{M}}
\newcommand{\E}{\mathcal{E}}
\newcommand{\D}{\mathcal{D}}
\newcommand{\G}{\mathbf{G}}
\newcommand{\Z}{\mathbb{Z}}
\newcommand{\I}{\mathbb{I}}
\newcommand{\J}{\mathcal{J}}
\newcommand{\bJ}{\mathbb{J}}
\newcommand{\BN}{\mathcal{B}}
\newcommand{\Lam}{\lambda}
\newcommand{\Ome}{\omega}
\newcommand{\HYD}{{}^{H}_{H}\mathcal{YD}}
\newcommand{\KYD}{{}^{\widetilde{H}}_{\widetilde{H}}\mathcal{YD}}
\newcommand{\As}{^\ast}
\newcommand{\N}{\mathds{N}}
\newcommand{\Pp}{\mathcal{P}}
\newcommand{\HH}{\widetilde{H}}
\newcommand{\KK}{\mathcal{K}}
\newcommand\ad{\operatorname{ad}}
\newcommand\Ob{\operatorname{Ob}}
\newcommand{\Alg}{\Hom_{\text{alg}}}
\newcommand\Aut{\operatorname{Aut}}
\newcommand{\AuH}{\Aut_{\text{Hopf}}}
\newcommand\coker{\operatorname{coker}}
\newcommand\car{\operatorname{char}}
\newcommand\Der{\operatorname{Der}}
\newcommand\diag{\operatorname{diag}}
\newcommand\End{\operatorname{End}}
\newcommand\mult{\operatorname{mult}}
\newcommand\id{\operatorname{id}}
\newcommand\Char{\operatorname{char}}
\newcommand\gr{\operatorname{gr}}
\newcommand\GK{\operatorname{GKdim}}
\newcommand{\Hom}{\operatorname{Hom}}
\newcommand\ord{\operatorname{ord}}
\newcommand\rk{\operatorname{rk}}
\newcommand\Soc{\operatorname{soc}}
\newcommand\lgot{\operatorname{l}}
\newcommand\Top{\operatorname{top}}
\newcommand\supp{\operatorname{supp}}
\newcommand{\bp}{\mathbf{p}}
\newcommand{\bq}{\mathbf{q}}
\newcommand\Sb{\mathbb S}
\newcommand\cR{\mathcal{R}}
\newcommand\cH{\mathcal{H}}

\begin{abstract}
Let $\mathds{k}$ be an algebraically closed field. We give a complete  classification of non-connected pointed Hopf algebras of dimension $16$ with $\operatorname{char}\mathds{k}=2$ that are generated by group-like elements and skew-primitive elements. It turns out that there are infinitely many classes (up to isomorphism) of
pointed Hopf algebras of dimension 16. In particular, we obtain infinitely many new examples of non-commutative   non-cocommutative finite-dimensional pointed Hopf algebras.

\bigskip
\noindent {\bf Keywords:} Nichols algebra;  Pointed Hopf algebra;  Positive characteristic; Lifting method.
\end{abstract}
\section{Introduction}
Let $\mathds{k}$ be an algebraically closed field of positive characteristic. It is a difficult question to classify Hopf algebras over $\K$ of a given dimension. Indeed, the complete classifications have been done only for prime dimensions  (see \cite{NW18}).  One may obtain partial classification results by  determining Hopf algebras with some properties. To date, pointed ones  are the class best classified.

Let $p,q,r$ be distinct prime numbers and $\Char\K=p$. G. Henderson    classified cocommutative connected Hopf algebras  of dimension less than or equal to $p^3$ \cite{Hen95};    X. Wang classified connected Hopf algebras   of dimension $p^2$ \cite{W1} and  pointed ones with L. Wang \cite{WW};  V. C. Nguyen, L. Wang and X. Wang determined connected Hopf algebras of dimension $p^3$  \cite{NWW1,NWW2}; Nguyen-Wang \cite{NW} studied the classification of non-connected pointed Hopf algebras of dimension $p^3$ and classified coradically graded ones; motivated by \cite{SO, NW}, the author   gave a complete classification of pointed Hopf algebras of dimension $pq$, $pqr$, $p^2q$, $2q^2$, $4p$ and pointed Hopf algebras of dimension $pq^2$ whose diagrams are Nichols algebras \cite{X17}.  It should be mentioned that   S. Scherotzke classified finite-dimensional pointed Hopf algebras   whose infinitesimal braidings are one-dimensional and the diagrams are Nichols algebras \cite{S}; N. Hu, X. Wang and Z. Tong  constructed many examples of pointed Hopf algebras of dimension $p^n$ for some $n\in\N$ via quantizations of the restricted universal enveloping algebras of the restricted modular simple Lie algebras of Cartan type, see \cite{HW07,HW11,THW15,TH16}; C. Cibils, A. Lauve and S. Witherspoon constructed several examples of finite-dimensional pointed Hopf algebras whose diagrams are Nichols algebras of Jordan type \cite{CLW}; The authors in  \cite{AAH19,ABFF,AP}  constructed some examples of finite-dimensional coradically graded pointed Hopf algebras whose diagram are Nichols algebras of non-diagonal type. Until now, it is still an open question to give a complete classification of non-connected pointed Hopf algebras of dimension $p^3$ or pointed ones of dimension $pq^2$ whose diagrams are not Nichols algebras for odd prime numbers $p,q$.

In this paper, we study the isomorphism classification of  non-connected pointed Hopf algebras with $\Char\K=2$ of dimension $16$  that are generated by group-like elements and skew-primitive elements. Indeed,  S. Caenepeel, S. D\u{a}sc\u{a}lescu and S. Raianu  \cite{CDR00} classified all pointed complex Hopf algebras of dimension $16$. It turns out that there are finitely many isomorphism classes and all are generated by group-like elements and skew-primitive elements.

The strategy follows the ideas in \cite{AS98b}, that is, the so-called  Lifting Method. Let $H$ be a finite dimensional Hopf algebra such that the coradical $H_0$ is a Hopf subalgebra, then $\gr H$, the graded coalgebra of $H$ associated to the coradical filtration, is a Hopf algebra with projection onto the coradical $H_0$. Then there is a connected graded braided Hopf algebra $R=\oplus_{n=0}^{\infty}R(n)$ in ${}^{H_0}_{H_0}\mathcal{YD}$ such that $\gr H\cong R\sharp H_0$.  We call $R$ and $R(1)$ the \emph{diagram} and \emph{infinitesimal braiding} of $H$, respectively.   Furthermore,  the diagram $R$ is coradically graded and the subalgebra generated  by $V$ is the so-called Nichols algebra $\BN(V)$ over $V:=R(1)$, which plays a key role in the classification of pointed complex Hopf algebras. In particular,   pointed Hopf algebras are generated by group-like elements and skew-primitive elements if and only if the diagrams are Nichols algebras.

By means of the Lifting Method \cite{AS98b}, we classify all non-connected Hopf algebras of dimension $16$ with $\Char\K=2$ whose diagrams are Nichols algebras. See Theorem \ref{thm:16-diagram-Nichols-algebra} for the classification results. Contrary to the case of characteristic zero, there exist infinitely many isomorphism classes, which provides a counterexample to Kaplansky's 10-th conjecture, and  there are infinitely many classes of pointed Hopf algebras of dimension $16$ with non-abelian coradicals.

Besides, we also classify pointed Hopf algebras of dimension $p^4$ with some properties, see e.g. Theorem \ref{thm:p4-x1y0z0}.  In particular, we obtain infinitely many new examples of non-commutative   non-cocommutative finite-dimensional pointed Hopf algebras.

The paper is organized as below: In section  \ref{secPre}, we introduce necessary notations and  materials that we will need to study pointed Hopf algebras in positive characteristic. In section \ref{sec:p4}, we study  pointed Hopf algebras   of dimension $p^4$ with some properties. In section \ref{sec:16}, we classify non-connected pointed Hopf algebras of dimension $16$ whose diagrams are Nichols algebras. The classification of  pointed ones whose diagrams are not Nichols algebras is much more difficult and requires different techniques, such as   the Hochschild cohomology of coalgebras   (see e.g. \cite{SO,NW,WZZ}). We treat them in a subsequent work.

\section{Preliminaries}\label{secPre}
\subsection*{Conventions}
We work over an algebraically closed field $\K$ of positive characteristic.  Denote by $\Char\K$ the characteristic of $\K$,  by $\N$ the set of natural numbers, and by $\Z_n$ the cyclic group of order $n$.  $\K^{\times}=\K-\{0\}$.   Given~$n\geq k\geq 0$,    $\I_{k,n}=\{k,k+1,\ldots,n\}$. Let $C$ be a coalgebra. Then the set $\G(C):=\{c\in C\mid \Delta(c)=c\otimes c,\ \epsilon(c)=1\}$ is called the set of \emph{group-like} elements of $C$. For any $g,h\in\G(C)$, the set  $\Pp_{g,h}(C):=\{c\in C\mid \Delta(c)=c\otimes g+h\otimes c\} $ is called the space of $(g,h)$-\emph{skew primitive elements} of $C$. In particular, the linear space $\Pp(C):=\Pp_{1,1}(C)$ is called the set of \emph{primitive elements}. Unless otherwise stated, ``pointed" refers to ``nontrivial pointed" in our context.

Our references for Hopf algebra theory are \cite{R11}.

\subsection{Yetter-Drinfeld modules and bonsonizations}
Let $H$ be a Hopf algebra with bijective antipode and ${}^{H}_{H}\mathcal{YD}$  the category of left Yetter-Drinfeld modules over $H$. As well-known, ${}^{H}_{H}\mathcal{YD}$ is braided monoidal with the braiding $c_{V,W}$ for $V,W\in {}^{H}_{H}\mathcal{YD}$ given by
\begin{align}\label{equbraidingYDcat}
c_{V,W}:V\otimes W\mapsto W\otimes V,\ v\otimes w\mapsto v_{(-1)}\cdot w\otimes v_{(0)},\ \forall\,v\in V, w\in W.
\end{align}
In particular, $c:=c_{V,V}$ is a linear isomorphism satisfying the
braid equation $(c\otimes\text{id})(\text{id}\otimes c)(c\otimes\text{id})=(\text{id}\otimes c)(c\otimes\text{id})(\text{id}\otimes c)$, that is, $(V,c)$ is a braided vector space.

\begin{rmk}\label{rmk:dimV=1}
Let~$V\in\HYD$~such that~$\dim V=1$. Let $\{v\}$ be a basis of  $V$. By definition, there is an algebra map~$\chi:H\rightarrow\K$~and~$g\in\G(H)$~satisfying
\begin{align*}
h_{(1)}\chi(h_{(2)})g=gh_{(2)}\chi(h_{(1)}),
\end{align*}
such that~$\delta(v)=g\otimes v$, $h\cdot v=\chi(h)v$. Moreover, $g$~lies in the center of $\G(H)$.
\end{rmk}

\begin{rmk}\cite[Remark 1.5]{AS02}
Suppose that $H=\K[G]$, where $G$ is a group. We write ${}_G^G\mathcal{YD}$ for the category of Yetter-Drinfeld modules over $\K[G]$. Let $V\in{}_G^G\mathcal{YD}$. Then $V$ as a $G$-comodule is just a $G$-graded vector space $V:=\oplus_{g\in G}V_g$, where $V_g:=\{v\in V\mid \delta(v)=g\otimes v\}$.

Assume in addition that the action of $G$ is diagonalizable, that is, $V=\oplus_{\chi\in \widehat{G}}V^{\chi}$, where $V^{\chi}:=\{v\in V\mid g\cdot v=\chi(g)v,\;\forall g\in G\}$.  Then
\begin{align*}
V=\oplus_{g\in G,\chi\in \widehat{G}}V_{g}^{\chi}, \text{ where }V_{g}^{\chi}=V_g\cap V^{\chi}.
\end{align*}
\end{rmk}

Let $G$ be a finite group. For any $g\in G$, we denote by $\mathcal{O}_g$ the conjugacy class of $g$, by $C_G(g)$ the isotropy subgroup of $g$ and by $\mathcal{O}(G)$ be the set of conjugacy classes of $G$. For any  $\Omega\in\mathcal{O}(G)$, fix $g_{\Omega}\in\Omega$, then $G=\sqcup_{\Omega\in\mathcal{O}(G)}\mathcal{O}_{g_{\Omega}}$ is a decomposition of conjugacy classes of $G$. Let $\psi:\K[C_G(g_{\Omega})]\rightarrow \End(V)$ be a representation of  $\K[C_G(g_{\Omega})]$, denoted by $(V,\psi)$. Then the induced module $M(g_{\Omega},\psi):=\K[G]\otimes_{\K[C_G(g_{\Omega})]}V$ can be an object in ${}_G^G\mathcal{YD}$ by
\begin{gather*}
h\cdot (g\otimes v)=hg\otimes v, \quad \delta(g\otimes v)=gg_{\Omega}g^{-1}\otimes (g\otimes v),\quad h,g\in G, v\in V.
\end{gather*}
In particular, $\dim M(g_{\Omega},\psi)=[G,C_G(g_{\Omega})]\times \dim V$. Furthermore, indecomposable objects in ${}_G^G\mathcal{YD}$ are indexed by the pairs  $(V,\psi)$, see e.g. \cite{M,Witherspoon}.
 \begin{thm}\cite{M,Witherspoon}\label{thm:indecomposable-object-YD-over-groups}
$M(g_{\Omega},\psi)$ is an indecomposable object in ${}_G^G\mathcal{YD}$ if and only if $(V,\psi)$ is an indecomposable $\K[C_G(g_{\Omega})]$-module. Furthermore, any indecomposable object in ${}_G^G\mathcal{YD}$ is isomorphic to $M(g_{\Omega},\psi)$ for some $\Omega\in\mathcal{O}(G)$ and indecomposable $\K[C_G(g_{\Omega})]$-module $(V,\psi)$.
\end{thm}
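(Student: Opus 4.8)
The plan is to prove both assertions at once by exhibiting $M(g_\Omega,\psi)$ as the image of $(V,\psi)$ under a category equivalence; since equivalences preserve and reflect indecomposability and exhaust isomorphism classes, the theorem follows formally. First I would use the comodule structure to regard an arbitrary $M\in{}_G^G\mathcal{YD}$ as a $G$-graded space $M=\oplus_{g\in G}M_g$ with $M_g=\{m\mid\delta(m)=g\otimes m\}$, and recall from the preceding remark that the Yetter--Drinfeld compatibility reads $h\cdot M_g\subseteq M_{hgh^{-1}}$. Thus the $G$-action permutes the homogeneous layers lying over a single conjugacy class, so for each $\Omega\in\mathcal{O}(G)$ the subspace $M[\Omega]:=\oplus_{g\in\Omega}M_g$ is a Yetter--Drinfeld submodule and $M=\oplus_{\Omega\in\mathcal{O}(G)}M[\Omega]$. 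As any morphism preserves both grading and action, it carries $M[\Omega]$ into $N[\Omega]$, so $\Hom$-spaces split along supports and we obtain a decomposition into a product of full subcategories
\begin{align*}
{}_G^G\mathcal{YD}\;\simeq\;\prod_{\Omega\in\mathcal{O}(G)}{}_G^G\mathcal{YD}[\Omega],
\end{align*}
where ${}_G^G\mathcal{YD}[\Omega]$ consists of the objects supported on $\Omega$.

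Next I would identify each block with a centralizer module category. Fix $\Omega$ and set $g=g_\Omega$, $K=C_G(g)$. Since $h\cdot M_g\subseteq M_{hgh^{-1}}$, the component $M_g$ is stable exactly under $K$, so the fiber functor $F_\Omega\colon M\mapsto M_g$ lands in $\K[K]$-modules. In the reverse direction, induction $V\mapsto M(g,\psi)=\K[G]\otimes_{\K[K]}V$, equipped with the action and coaction from the statement, produces an object supported on $\Omega$: choosing coset representatives $\{t_i\}$ of $G/K$ gives $M(g,\psi)=\oplus_i\,t_i\otimes V$ with $\delta(t_i\otimes v)=t_igt_i^{-1}\otimes(t_i\otimes v)$, so the $i$-th summand sits in degree $t_igt_i^{-1}$ and these degrees exhaust $\Omega$ without repetition. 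I would then check that $F_\Omega$ and $M(g,-)$ are mutually quasi-inverse. The degree-$g$ part of $\K[G]\otimes_{\K[K]}V$ is $1\otimes V\cong V$ with its original $K$-action, giving $F_\Omega\circ M(g,-)\cong\mathrm{id}$; conversely, for $M$ supported on $\Omega$ the canonical map $\K[G]\otimes_{\K[K]}M_g\to M[\Omega]=M$, $t\otimes m\mapsto t\cdot m$, is a natural isomorphism respecting grading and action. Surjectivity holds because $t\cdot M_g=M_{tgt^{-1}}$ sweeps out every fiber, and injectivity follows from the dimension count $\dim\bigl(\K[G]\otimes_{\K[K]}M_g\bigr)=[G:K]\dim M_g=|\Omega|\dim M_g=\dim M[\Omega]$. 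This yields ${}_G^G\mathcal{YD}[\Omega]\simeq\K[C_G(g_\Omega)]\text{-mod}$.

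With these equivalences in place both claims are immediate. The product decomposition shows that an object of ${}_G^G\mathcal{YD}$ is indecomposable if and only if it is supported on a single class $\Omega$ and indecomposable in that block; transporting through the block equivalence, $M(g_\Omega,\psi)$ is indecomposable precisely when $(V,\psi)$ is indecomposable over $\K[C_G(g_\Omega)]$. For the second assertion, an arbitrary indecomposable $M$ must satisfy $M=M[\Omega]$ for a unique $\Omega$, and the block equivalence then gives $M\cong M(g_\Omega,\psi)$ with $\psi$ the $K$-action on the indecomposable module $M_{g_\Omega}$. I expect the only genuinely delicate point to be the bookkeeping that makes $F_\Omega$ and $M(g,-)$ into \emph{Yetter--Drinfeld} functors — verifying that the canonical isomorphism is simultaneously a map of $G$-comodules and of $\K[G]$-modules, and that the prescribed coaction on the induced module is the correct one; the conjugacy-class decomposition and the transfer of indecomposability are then purely formal.
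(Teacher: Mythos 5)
The paper does not prove this theorem at all — it is quoted from \cite{M,Witherspoon} — and your block-decomposition argument (support on conjugacy classes, then the induction/fiber-functor equivalence ${}_G^G\mathcal{YD}[\Omega]\simeq \K[C_G(g_\Omega)]\text{-mod}$) is precisely the standard proof underlying those references; it is correct as written. The only cosmetic improvement: injectivity of the counit $\K[G]\otimes_{\K[K]}M_g\to M$ follows directly from the fact that the summands $t_i\otimes M_g$ land in pairwise distinct graded components $M_{t_igt_i^{-1}}$ and each $t_i$ acts invertibly, which avoids the dimension count and hence any finiteness hypothesis.
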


Let $\Z_{p^s}:=\langle g\rangle$ and $\Char\K=p$. Then the $p^s$  non-isomorphic indecomposable $\Z_{p^s}$-modules consist of $r$-dimensional modules $V_r=\K\{v_1,v_2,\cdots,v_r\}$ for $r\in\I_{1,p^s}$,   whose   module structure given by
\begin{gather*}
g\cdot v_1=v_1,\quad g\cdot v_{m}=v_{m}+v_{m-1},\quad 1<m\leq r.
\end{gather*}

The following well-known result follows directly by Theorem \ref{thm:indecomposable-object-YD-over-groups}. See e.g.\cite{DC} for details.
 \begin{pro}\label{pro:indecomposable-object-cyclic-p-group}
Let $\Z_{p^s}:=\langle g\rangle$ and $\Char\K=p$. The  indecomposable objects in ${}_{\Z_{p^s}}^{\Z_{p^s}}\mathcal{YD}$ consist of $r$-dimensional objects $M_{i,r}:=M(g^i,V_r)=\K\{v_1,v_2,\cdots,v_r\}$ for $r\in\I_{1,p^s}$, $i\in\I_{0,p^s-1}$, whose Yetter-Drinfeld module structure is given by
\begin{gather*}
g\cdot v_1=v_1,\quad g\cdot v_{m}=v_{m}+v_{m-1},\quad 1<m\leq r;\quad \delta(v_n)=g^i\otimes v_n,\quad n\in\I_{1,r}.
\end{gather*}
\end{pro}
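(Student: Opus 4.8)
The plan is to specialize Theorem \ref{thm:indecomposable-object-YD-over-groups} to the abelian group $G = C_{p^s}$ and then make the induced modules explicit. First I would record the consequences of $G$ being abelian: every conjugacy class is a singleton, so the set of conjugacy classes is in bijection with $G = \{g^i \mid i \in \I_{0,p^s-1}\}$, and for each $i$ the centralizer is the whole group, $C_G(g^i) = G$. Thus the pairs $(g_\Omega,\psi)$ parametrizing indecomposable objects reduce to pairs $(g^i,\psi)$ with $\psi$ an indecomposable representation of $\K[C_{p^s}]$ itself.

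Next I would invoke the classification of indecomposable $\K[C_{p^s}]$-modules recalled just above the statement: in characteristic $p$ one has $\K[C_{p^s}] \cong \K[x]/\bigl((x-1)^{p^s}\bigr)$, a local ring whose indecomposable modules are exactly the Jordan-block modules $V_r = \K\{v_1,\ldots,v_r\}$ for $r \in \I_{1,p^s}$. By Theorem \ref{thm:indecomposable-object-YD-over-groups}, the objects $M(g^i,V_r)$ are therefore indecomposable, and every indecomposable object of ${}_{C_{p^s}}^{C_{p^s}}\mathcal{YD}$ is isomorphic to one of them; the ranges $r \in \I_{1,p^s}$ and $i \in \I_{0,p^s-1}$ record precisely the choices of module and of grading degree.

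It remains to read off the stated Yetter-Drinfeld structure from the general induced-module formulas. Since $C_G(g^i) = G$, the induction $M(g^i,V_r) = \K[G] \otimes_{\K[G]} V_r$ collapses to $V_r$ itself, identifying $1 \otimes v_n$ with $v_n$; the module action is then simply the action of $g$ on $V_r$, namely $g \cdot v_1 = v_1$ and $g \cdot v_m = v_m + v_{m-1}$ for $1 < m \leq r$. For the coaction, the general rule $\delta(g \otimes v) = g g_\Omega g^{-1} \otimes (g \otimes v)$ simplifies using commutativity to $\delta(v_n) = g^i \otimes v_n$, since $g g^i g^{-1} = g^i$. This yields exactly the $M_{i,r}$ of the statement. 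The only genuine point to check is that the compatibility condition \eqref{eq-YD-Def} holds for these data, and this is immediate because $G$ is abelian, so I expect no real obstacle: the substance of the result is carried entirely by Theorem \ref{thm:indecomposable-object-YD-over-groups} together with the classification of indecomposable $\K[C_{p^s}]$-modules.
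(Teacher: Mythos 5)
Your proposal is correct and follows exactly the route the paper intends: the paper gives no written proof but states that the proposition "follows directly by Theorem \ref{thm:indecomposable-object-YD-over-groups}," combined with the classification of indecomposable $\K[C_{p^s}]$-modules recalled immediately beforehand, which is precisely your specialization to singleton conjugacy classes and full centralizers. No gaps.
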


We consider Hopf algebras in ${}^{H}_{H}\mathcal{YD}$. For any finite-dimensional graded Hopf algebra in ${}^{H}_{H}\mathcal{YD}$, it satisfies the Poincar\'{e} duality:
\begin{pro}\cite[Proposition\,3.2.2]{AG99}\label{pro-P-duality}
Let $R=\oplus_{n=0}^N R(i)$ be a graded Hopf algebra in ${}^{H}_{H}\mathcal{YD}$, and suppose that $R(N)\neq 0$. Then $\dim R(i)=\dim R(N-i)$ for any $0\leq i<N$.
\end{pro}

 Let $R$ be a braided Hopf algebra in ${}^{H}_{H}\mathcal{YD}$. We write $\Delta_R(r)=r^{(1)}\otimes r^{(2)}$ for the comultiplication to avoid confusions. The \emph{bosonization or Radford biproduct} $R\sharp H$ of $R$ by $H$ is a   Hopf algebra over $\K$  defined as follows:
$R\sharp H=R\otimes H$ as a vector space, and the multiplication and comultiplication are given by the smash product and smash coproduct, respectively:
\begin{align*}
(r\sharp g)(s\sharp h)=r(g_{(1)}\cdot s)\sharp g_{(2)}h,\quad
\Delta(r\sharp g) =r^{(1)}\sharp (r^{(2)})_{(-1)}g_{(1)}\otimes (r^{(2)})_{(0)}\sharp g_{(2)}.
\end{align*}
See \cite{R11} for  details.

\subsection{Braided vector spaces and Nichols algebras}
We follow \cite{AS02} to introduce the definition of Nichols algebras.

Let  $(V, c)$ be a braided vector space. Then the tensor algebra $T(V)=\oplus_{n\geq 0}T^n(V):=\oplus_{n\geq 0}V^{\otimes n}$ admits a connected braided Hopf algebra structure in the usual way.

Let $\mathbb B_n$ be the braid group  presented by generators $(\tau_j)_{j \in \I_{1,n-1}}$
with the defining relations
\begin{align}\label{eq:braid-rel}
\tau_i\tau_j = \tau_j\tau_i,\quad \tau_i  \tau_{i+1}\tau_i = \tau_{i+1}\tau_i\tau_{i+1}, \quad\text{ for } i\in\I_{1,n-2},~j\neq i+1.
\end{align}

Then there exists naturally the representation $\varrho_n$ of
$\mathbb B_n$ on $T^n(V)$ for $n \geq 2$ given by $$\varrho_n: \tau_j \mapsto c_j:=\id_{V^{\otimes (j-1)}} \otimes c \otimes \id_{V^{\otimes (n - j-1)}}.$$

Let  $M_n: \mathbb{S}_n \rightarrow \mathbb B_n$ be the (set-theoretical)
Matsumoto section, that preserves the length and satisfies $M_n(s_j) = \tau_j$.
Then the \emph{quantum symmetrizer} $\Omega_n:V^{\otimes n}\rightarrow V^{\otimes n}$  is defined by
\begin{align*}
 \Omega_n = \sum_{\tau \in \mathbb{S}_n} \varrho_n (M_n(\tau)).
\end{align*}

\begin{defi}\label{defi-Nicholsalgebra}
Let $(V, c)$ be a braided vector space. The Nichols algebra $\BN(V)$ is  defined by
\begin{align}
\BN(V) =  T(V) / \J(V),\quad \text{where }\J(V) = \oplus_{n\geq 2}\J^n(V) \text{ and }\J^n(V) = \ker \Omega_n.
\end{align}
\end{defi}
Indeed, $\J(V)$ coincides with the largest homogeneous ideal of $T(V)$ generated by elements of degree bigger than $2$ that is also a coideal. Moreover, $\BN(V)=\oplus_{n\geq 0}\BN^n(V)$ is a connected $\N$-graded Hopf algebra.

\begin{example}
A braided vector space $(V, c)$ of rank $m$ is said to be of diagonal type, if there exists a basis $\{x_i\}_{i\in\I_{1,m}}$ such that $c(x_i\otimes x_j)=q_{ij}x_j\otimes x_i$ for $q_{i,j}\in\K^{\times}$.     Rank 2 and 3 Nichols algebras of   diagonal type with finite PBW-generators were classified in \cite{WH,W}.
\end{example}

\begin{example}\label{ex-1}
A braided vector space $(V,c)$ of rank $m>1$ is said to be of Jordan type, denoted by $\mathcal{V}(s,m)$, if there exists a basis $\{x_i\}_{i\in\I_{1,m}}$ such that
\begin{gather*}
c(x_i\otimes x_1)=sx_1\otimes x_i,\quad \text{and}\quad c(x_i\otimes x_j)=(sx_j+x_{j-1})\otimes x_i,\quad i\in\I_{1,m},j\in\I_{2,m}.
\end{gather*}

Let $V_i=\K\{x_1,x_2,\cdots,x_i\}$ for $i\in\I_{1,m}$. It is clear that $0\subset V_1\subset V_2\cdots \subset V_{m}=\mathcal{V}(s,m)$ is a flag of   $\mathcal{V}(s,m)$. Then $\gr\mathcal{V}(s,m)$ is of diagonal type with braiding $(q_{i,j})_{i,j\in\I_{1,m}}$, $q_{i,j}=s$ for all $i,j\in\I_{1,m}$.
In particular, if $\Char\K=p$, then $\dim\BN(\mathcal{V}(1,m))\geq \dim\BN(\gr\mathcal{V}(1,m))=p^m$. See e.g. \cite[3.4]{AAH19} for more details.
\end{example}

\begin{pro}\cite{AS02}
A $\N$-graded Hopf algebra $R=\oplus_{n\geq 0} R(n)$ in ${}^{H}_{H}\mathcal{YD}$ is a Nichols algebra if and only if
\begin{itemize}
\item[(1)] $R(0)\cong\K$,\quad
 $(2)$ $\Pp(R)=R(1)$,\quad
 $(3)$ $R$~ is generated  as an algebra by~$R(1)$.
\end{itemize}
\end{pro}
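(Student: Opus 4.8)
The plan is to exploit the universal description of $\BN(V)$ as the quotient $T(V)/\J(V)$, together with the fact recalled above that $\J(V)$ is the \emph{largest} homogeneous ideal of $T(V)$ supported in degrees $\geq 2$ which is simultaneously a coideal. Throughout I write $V:=R(1)$.

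For the ``only if'' direction, suppose $R=\BN(V)$. Conditions $(1)$ and $(3)$ are immediate: since $T(V)$ is connected and generated in degree $1$ and $\J(V)\subseteq\oplus_{n\geq 2}T^n(V)$, the quotient $\BN(V)$ inherits $\BN^0(V)=\K$ and is generated by $\BN^1(V)=V$. For $(2)$ I would argue by maximality of $\J(V)$: the inclusion $V\subseteq\Pp(\BN(V))$ holds in any $\N$-graded connected coalgebra, so it suffices to exclude a nonzero primitive $p\in\BN^n(V)$ with $n\geq 2$. Lifting $p$ to $\tilde p\in T^n(V)$ and applying the canonical projection, primitivity of $p$ forces the reduced coproduct $\bar\Delta(\tilde p)$ to lie in $\J(V)\otimes T(V)+T(V)\otimes\J(V)$; hence the ideal generated by $\J(V)$ and $\tilde p$ is again a coideal contained in $\oplus_{n\geq 2}T^n(V)$ and strictly larger than $\J(V)$, contradicting maximality. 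Thus $\Pp(\BN(V))=V$.

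For the ``if'' direction, assume $(1)$--$(3)$. Because $R(1)\subseteq\Pp(R)$ and $R$ is generated as an algebra by $R(1)=V$ by condition $(3)$, the inclusion $V\hookrightarrow R$ extends, via the universal property of the tensor algebra as a braided Hopf algebra in $\HYD$, to a surjective homomorphism of $\N$-graded braided Hopf algebras $\varpi:T(V)\twoheadrightarrow R$ which is the identity in degrees $0$ and $1$. Its kernel $I$ is a graded Hopf ideal, and since $\varpi$ is bijective in degrees $\leq 1$ we have $I\subseteq\oplus_{n\geq 2}T^n(V)$; by the maximality of $\J(V)$ this yields $I\subseteq\J(V)$, whence a surjection $q:R\twoheadrightarrow\BN(V)$ of graded braided Hopf algebras, again the identity in degrees $\leq 1$.

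It remains to show $q$ is injective, which is where condition $(2)$ enters and which I expect to be the crux. Let $K:=\ker q$, a graded coideal with $K\subseteq\oplus_{n\geq 2}R(n)$, and suppose $K\neq 0$; choose the minimal $n$ with $K(n)\neq 0$. For $x\in K(n)$ gradedness gives $\bar\Delta(x)\in\bigoplus_{0<i<n}R(i)\otimes R(n-i)$, and since $q$ is a coalgebra map with $q(x)=0$ we get $(q\otimes q)\bar\Delta(x)=\bar\Delta(q(x))=0$. As $q$ is injective in every degree $<n$ by minimality of $n$, the map $q\otimes q$ is injective on $\bigoplus_{0<i<n}R(i)\otimes R(n-i)$, forcing $\bar\Delta(x)=0$. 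Thus $x\in\Pp(R)\cap R(n)$ with $n\geq 2$, contradicting $(2)$. Hence $K=0$, $q$ is an isomorphism, and $R\cong\BN(V)$. The delicate points throughout are verifying that the ideals produced are genuinely coideals in the braided category $\HYD$ and that the reduced coproduct respects the grading as claimed; these are routine but must be carried out with the braiding in place.
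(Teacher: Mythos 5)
Your argument is correct and is exactly the standard proof of this characterization; the paper itself states the proposition without proof, quoting it as known from the lifting-method literature (cf.\ \cite{AS02}), and the maximality of $\J(V)$ among graded ideals-and-coideals in degrees $\geq 2$ that you invoke is precisely the property the paper records right after Definition \ref{defi-Nicholsalgebra}. The only point needing the care you already flag is that in the ``only if'' direction one should adjoin to $\J(V)$ the Yetter--Drinfeld submodule generated by the lift $\tilde p$ (not just $\K\tilde p$) so that the resulting ideal is $H$-stable and hence genuinely a coideal for the braided comultiplication; with that adjustment the proof is complete.
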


Recall that an object in the category of Yetter-Drinfeld modules is a braided vector space.
\begin{pro}\cite[Theorem 5.7]{T00}\label{pro-Nichols-YD-Realization}
Let $(V,c)$ be a rigid braided vector space. Then  $\BN(V)$ can be realized as a braided Hopf algebra in $\HYD$ for some Hopf algebra $H$.
\end{pro}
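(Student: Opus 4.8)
The plan is to produce the Hopf algebra $H$ by an FRT-type (Faddeev–Reshetikhin–Takhtajan) reconstruction and then to exploit the resulting coquasitriangular structure to transport the comodule $V$ into $\HYD$ with its braiding intact. Concretely, fix a basis $\{x_i\}_{i\in\I_{1,n}}$ of the finite-dimensional space $V$ and write $c(x_i\otimes x_j)=\sum_{k,l}c^{kl}_{ij}\,x_k\otimes x_l$, where the matrix $(c^{kl}_{ij})$ is invertible (invertibility of $c$ being built into the definition of a braiding, as noted after \eqref{equbraidingYDcat}). First I would build the free bialgebra $A(c)$ on symbols $\{t^i_j\}_{i,j\in\I_{1,n}}$, with $\Delta(t^i_j)=\sum_k t^i_k\otimes t^k_j$ and $\epsilon(t^i_j)=\delta_{ij}$, modulo the quadratic RTT-relations that force $c$ to be a morphism of $A(c)$-comodules, i.e. $\delta\circ c=(c\otimes\id)\circ\delta^{\otimes 2}$ on $V\otimes V$. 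Then $V$ is a right $A(c)$-comodule via $\delta(x_i)=\sum_j x_j\otimes t^j_i$.

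Next I would equip $A(c)$ with a universal $r$-form $\mathfrak{r}:A(c)\otimes A(c)\to\K$, that is, a coquasitriangular (cobraided) structure, determined on generators by $\mathfrak{r}(t^i_k\otimes t^j_l)=c^{ij}_{kl}$ and extended multiplicatively in both arguments; the RTT-relations are exactly what is needed for $\mathfrak{r}$ to be well defined and to satisfy the cobraiding axioms. The point of this step is that for any cobraided bialgebra the category of comodules is braided, the braiding being expressed through $\mathfrak{r}$, and on $V$ this reproduces precisely the original $c$.

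The remaining issue is that $A(c)$ is only a bialgebra, whereas $\HYD$ requires a Hopf algebra $H$. This is where rigidity enters and is the crux of the argument. Because $(V,c)$ is rigid, the dual braided vector space $V^{*}$ exists, and one enlarges $A(c)$ by adjoining the matrix coefficients of the dual coaction—equivalently, one formally inverts the matrix $(t^i_j)$ in a cobraided fashion, passing to a localization $A(c)\to\widehat{A(c)}$. The universal $r$-form extends to this localization and an antipode is manufactured from the duality morphisms, yielding a coquasitriangular Hopf algebra $H$ with bijective antipode in which $V$ is still a comodule whose induced braiding is $c$. I expect the careful bookkeeping here—checking that the $r$-form extends compatibly to the antipode-completion and that the braiding on $V$ is genuinely unchanged, rather than merely conjugate, under the enlargement—to be the main technical obstacle.

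Finally I would invoke the standard functor from right $H$-comodules to $\HYD$ available for any coquasitriangular $H$: a comodule $M$ with coaction $\delta$ becomes a Yetter–Drinfeld module whose $H$-action is defined through $\mathfrak{r}$, the coaction being converted from right to left via the antipode, in such a way that the intrinsic braiding of $\HYD$ from \eqref{equbraidingYDcat} restricts on the image of $V$ to the given $c$. Since the Nichols algebra $\BN(V)=T(V)/\J(V)$ depends only on $(V,c)$—the defining ideal $\J(V)=\oplus_{n\geq2}\ker\Omega_n$ being computed from $c$ alone—transporting $V$ along this functor carries $\BN(V)$ to a braided Hopf algebra in $\HYD$, which proves the claim.
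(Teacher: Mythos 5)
Your proposal is correct and follows essentially the same route as the proof in the cited reference \cite{T00}: the paper itself offers no argument beyond the citation, and Takeuchi's proof of Theorem 5.7 is exactly the FRT reconstruction of a coquasitriangular bialgebra $A(c)$, its Hopf envelope obtained via rigidity, and the standard transport of comodules over a coquasitriangular Hopf algebra into $\HYD$ so that the induced categorical braiding on $V$ recovers $c$. Since $\BN(V)$ depends only on $(V,c)$, this realizes the Nichols algebra in $\HYD$ as claimed.
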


\begin{rmk}
 By Definition \ref{defi-Nicholsalgebra} and Proposition \ref{pro-Nichols-YD-Realization},
 $\BN(V)$ depends only on $(V, c_{V,V})$ and  the same braided vector space  can be realized in $\HYD$ in many ways and for many $H$'s.
\end{rmk}

\subsection{Useful results in positive characteristic}
We introduce some important skills in positive characteristic.
Set $(\AdL x)(y) := [x, y]$ and $(x)(\AdR y)=[x, y]$.
\begin{pro}\cite[p. 186-187]{J}\label{proJ}
Let $A$ be any associative algebra over a field of characteristic $p$. For any $a, b\in A$,
\begin{gather*}
(\AdL a)^p(b)=[a^p,b],\quad
(\AdL a)^{p-1}(b)=\sum_{i=0}^{p-1}a^{i}ba^{p-1-i};\\
(a)(\AdR b)^{p}=[a,b^p],\quad
(a)(\AdR b)^{p-1}=\sum_{i=0}^{p-1}b^{p-1-i}ab^i.
\end{gather*}
Furthermore,
\begin{align*}
(a + b)^p=a^p+b^p+\sum_{i=1}^{p-1}s_i(a,b),
\end{align*}
where $is_i(a,b)$ is the coefficient of $\lambda^{i-1}$ in $(a)(\AdR \lambda a+b)^{p-1}$, $\lambda$ an indeterminate.
\end{pro}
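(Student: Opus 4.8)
The plan is to reduce the four adjoint identities to elementary manipulations with the left- and right-multiplication operators, and then obtain the expansion of $(a+b)^p$ by a formal differentiation argument. Write $L_a$ and $R_a$ for the operators $y\mapsto ay$ and $y\mapsto ya$ on $A$. These always commute, $L_aR_b=R_bL_a$, and $\AdL a=L_a-R_a$, while the right action satisfies $\AdR b=R_b-L_b$. First I would prove the two identities for $\AdL$; those for $\AdR$ follow verbatim with left and right interchanged. Since $L_a$ and $R_a$ commute, the binomial theorem applies to $(L_a-R_a)^n$. For $n=p$ the congruence $\binom{p}{k}\equiv 0\pmod p$ for $0<k<p$ leaves only the extreme terms, so $(\AdL a)^p=L_a^p+(-1)^pR_a^p=L_{a^p}-R_{a^p}=\AdL(a^p)$ in characteristic $p$ (here $(-1)^p=-1$ also when $p=2$, since $1=-1$), which is exactly $(\AdL a)^p(b)=[a^p,b]$. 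For $n=p-1$ I would use $\binom{p-1}{k}\equiv(-1)^k\pmod p$, so each term carries coefficient $\binom{p-1}{k}(-1)^{p-1-k}\equiv(-1)^{p-1}=1$; applying the operator to $b$ via $L_a^kR_a^{p-1-k}(b)=a^kba^{p-1-k}$ then gives $(\AdL a)^{p-1}(b)=\sum_{k=0}^{p-1}a^kba^{p-1-k}$.

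For the expansion of $(a+b)^p$, I would introduce a central indeterminate $\lambda$, work in $A[\lambda]$, set $c=\lambda a+b$, and write $c^p=\sum_{i=0}^p\lambda^iu_i$ with $u_i\in A$. The key device is the formal derivative $\frac{d}{d\lambda}$, which is additive and obeys the order-preserving Leibniz rule on noncommutative polynomials; by induction on the exponent it yields $\frac{d}{d\lambda}c^p=\sum_{j=0}^{p-1}c^j\,a\,c^{p-1-j}$, which after the reindexing $i=p-1-j$ is precisely $(a)(\AdR c)^{p-1}$ by the right-handed version of the second identity. Comparing the two expressions for $\frac{d}{d\lambda}c^p$ --- namely $\sum_{i=1}^{p-1}i\lambda^{i-1}u_i$, where the $i=p$ term dies because $p\equiv 0$, and the polynomial $(a)(\AdR(\lambda a+b))^{p-1}$ whose coefficient of $\lambda^{i-1}$ is $is_i(a,b)$ by the definition of $s_i$ --- I conclude $u_i=s_i(a,b)$ for $1\le i\le p-1$.

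Finally I would pin down the two endpoints separately, since differentiation does not detect them: the coefficient of $\lambda^p$ in $c^p$ arises only from the word $(\lambda a)^p$, so $u_p=a^p$, and evaluating at $\lambda=0$ gives $u_0=b^p$. Substituting $\lambda=1$ then produces $(a+b)^p=a^p+b^p+\sum_{i=1}^{p-1}s_i(a,b)$. The only genuine subtlety is this differentiation step in characteristic $p$: one must verify that the formal derivative still obeys the noncommutative product rule, so that the $\AdR$-formula can be invoked intact, and one must recover $u_0$ and $u_p$ by hand, precisely because the factor $i$ in the relation $iu_i=is_i(a,b)$ is invertible exactly for $1\le i\le p-1$ and annihilates the information at the endpoints $i=0,p$.
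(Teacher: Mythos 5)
Your proof is correct and is essentially the argument the paper relies on: the paper gives no proof of its own, citing Jacobson, and your derivation (commuting left/right multiplication operators plus the binomial congruences for the first four identities, then the formal-derivative comparison of coefficients in $(\lambda a+b)^p$ for the last one) is precisely the classical proof from that reference. The one subtlety you flag — that the Leibniz rule for $d/d\lambda$ survives noncommutativity and that the endpoints $u_0=b^p$, $u_p=a^p$ must be read off separately because the factor $i$ vanishes there — is handled correctly.
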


\begin{lem}\label{pqlem1}\cite[Lemma\,5.1(1)]{NW}
Assume that  $\Char\K=p>0$. Let $A$ be an associative algebra over $\K$ with generators $g$ and $x$, subject to  the relations $g^{p^n}=1, gx-xg=  g(1-g)$. Then
\begin{description}
  \item[(1)] $(g)(\AdR x)^{p-1}=g-g^p$, $[g,x^p]=(g)(\AdR x)^{p}=[g,x]$.
  \item[(2)] $(\AdL x)^{p-1}(g)=g-g^p$, $[x^p, g]=(\AdL x)^p(g)=[x,g]$.
\end{description}
\end{lem}

\begin{lem}\label{pqlem2}\cite[p.423]{NW}
Let $\Char\K=p>0$ and $\mu\in\I_{1,p-1}$. Let $A$ be an   associative algebra generated by $g$, $x$, $y$. Assume that the  relations
\begin{gather*}
g^{p}=1, \quad gx-xg=\lambda_1(g-g^2),\quad gy-yg=\lambda_2(g-g^{\mu+1}),\\x^p-\lambda_1x=0,\quad y^p-\lambda_2y=0,\quad xy-yx+\mu\lambda_1 y-\lambda_2x=\lambda_3(1-g^{\mu+1}),
\end{gather*}
hold in $A$ for some $\lambda_1,\lambda_2\in\I_{0,1},\lambda_3\in\K$. Then
\begin{description}
  \item[(1)] $(x)(\AdR y)^n=\lambda_2^{n-1}(x)(\AdR y)-\lambda_3\sum_{i=0}^{n-2}\lambda_2^i(g^{\mu+1})(\AdR y)^{n-1-i}$ for $n>0$. In particular,    $(x)(\AdR y)^p=\lambda_2^{p-1}(x)(\AdR y)$.
  \item[(2)] $(\AdL x)^n(y)=(-\mu\lambda_1)^{n-1}(\AdL x)(y)-\lambda_3\sum_{i=0}^{n-2}(-\mu\lambda_1)^i(\AdL x)^{n-1-i}(g^{\mu+1})$. In particular,   $(\AdL x)^p(y)=(-\mu\lambda_1)^{p-1}(\AdL x)(y)$.
  \end{description}
\end{lem}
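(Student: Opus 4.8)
The plan is to reduce all four identities to a single relation, the value of the commutator $[x,y]$, and then run two parallel inductions. First I would rewrite the last defining relation as
\[
[x,y] \;=\; \lambda_2 x - \mu\lambda_1 y + \lambda_3(1 - g^{\mu+1}),
\]
so that both $(x)(\AdR y)=[x,y]$ and $(\AdL x)(y)=[x,y]$ are given explicitly. Next I would record the two ``one-step'' identities that drive the inductions. Applying $\AdR y$ to the expression above and using $[y,y]=0$ and $[1,y]=0$, the summands $-\mu\lambda_1 y$ and $\lambda_3$ vanish, leaving
\[
(x)(\AdR y)^2 \;=\; \lambda_2\,(x)(\AdR y) - \lambda_3\,(g^{\mu+1})(\AdR y).
\]
Symmetrically, applying $\AdL x$ on the left and using $[x,x]=0$ and $[x,1]=0$ kills the $\lambda_2 x$ and $\lambda_3$ summands, giving
\[
(\AdL x)^2(y) \;=\; -\mu\lambda_1\,(\AdL x)(y) - \lambda_3\,(\AdL x)(g^{\mu+1}).
\]
These two formulas are the engine of the whole argument.

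For (1) I would induct on $n$, the base case $n=1$ being the empty-sum tautology. Applying $\AdR y$ to the formula for $(x)(\AdR y)^n$ and substituting the one-step identity for $(x)(\AdR y)^2$ produces the term $\lambda_2^n(x)(\AdR y)$, one isolated term $-\lambda_3\lambda_2^{n-1}(g^{\mu+1})(\AdR y)$, and the shifted sum $-\lambda_3\sum_{i=0}^{n-2}\lambda_2^i(g^{\mu+1})(\AdR y)^{n-i}$. Since the isolated term is exactly the missing $i=n-1$ summand, reindexing merges the two into $-\lambda_3\sum_{i=0}^{n-1}\lambda_2^i(g^{\mu+1})(\AdR y)^{n-i}$, which is the claim with $n$ replaced by $n+1$. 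Part (2) is formally identical after replacing $(x)(\AdR y)$ by $(\AdL x)(y)$, the scalar $\lambda_2$ by $-\mu\lambda_1$, and $(g^{\mu+1})(\AdR y)^m$ by $(\AdL x)^m(g^{\mu+1})$.

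Finally, for the two specializations at $k=1$ I would invoke Proposition \ref{proJ}, which gives $(x)(\AdR y)^p=[x,y^p]$ and $(\AdL x)^p(y)=[x^p,y]$. Combined with the power relations $y^p=\lambda_2 y$ and $x^p=\lambda_1 x$, these read $(x)(\AdR y)^p=\lambda_2\,(x)(\AdR y)$ and $(\AdL x)^p(y)=\lambda_1\,(\AdL x)(y)$, so it only remains to match coefficients. Since $\lambda_2\in\I_{0,1}$ we have $\lambda_2=\lambda_2^{p-1}$, which settles (1) (in fact for every $k$). For (2) I must verify $\lambda_1=(-\mu\lambda_1)^{p-1}$: this is trivial when $\lambda_1=0$ and reduces to $(-\mu)^{p-1}=1$ when $\lambda_1=1$. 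I expect this to be the only delicate point, and it is precisely where the hypothesis $k=1$ enters, since it forces $\mu\in\I_{1,p-1}$, hence $p\nmid\mu$, so that Fermat's little theorem yields $\mu^{p-1}=1$, while $(-1)^{p-1}=1$ holds for odd $p$ and also in characteristic $2$, where $-1=1$. Alternatively, one can bypass Proposition \ref{proJ} and deduce the vanishing of the residual sum directly by comparing the $n=p$ case of the general formula with these same $p$-power identities.
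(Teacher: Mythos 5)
Your argument is correct and complete. Note that the paper itself gives no proof of this lemma: it is imported verbatim from \cite{NW} with only a citation, so there is nothing internal to compare against; your write-up is the natural argument and almost certainly the one in the source. The two one-step identities $(x)(\AdR y)^2=\lambda_2(x)(\AdR y)-\lambda_3(g^{\mu+1})(\AdR y)$ and $(\AdL x)^2(y)=-\mu\lambda_1(\AdL x)(y)-\lambda_3(\AdL x)(g^{\mu+1})$ are exactly right, the reindexing in the induction closes correctly, and the specializations via Proposition \ref{proJ} together with $x^p=\lambda_1x$, $y^p=\lambda_2y$ are handled cleanly. You also correctly isolate the only place where $k=1$ is genuinely used, namely $(-\mu)^{p-1}=1$ in part (2), which requires $p\nmid\mu$ (forced by $\mu\in\I_{1,p-1}$), whereas the specialization in part (1) holds for all $k$ since $\lambda_2\in\I_{0,1}$ gives $\lambda_2=\lambda_2^{p-1}$.
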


Now we introduce the following proposition, which is useful to determine when a coalgebra   map is one-one.
\begin{pro}\cite[Proposition 4.3.3]{R11}\label{pro:R11-4.3.3}
Let $C, D$ be coalgebras over $\K$ and $f: C \rightarrow D$ is a coalgebra map. Assume that $C$ is pointed. Then the following are equivalent:
\begin{itemize}
  \item[(a)] $f$~is one-one.
  \item[(b)] For any~$g,h\in\G(C)$, $f|_{\Pp_{g,h}(C)}$~is one-one.
  \item[(c)] $f|_{C_1}$~is one-one.
\end{itemize}
\end{pro}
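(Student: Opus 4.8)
The plan is to prove the cyclic chain of implications $(a)\Rightarrow(b)\Rightarrow(c)\Rightarrow(a)$. The implication $(a)\Rightarrow(b)$ is immediate, since the restriction of an injective map to any subspace — in particular to $\Pp_{g,h}(C)\subseteq C_1\subseteq C$ — is again injective. Thus the content lies entirely in $(b)\Rightarrow(c)$, which reduces injectivity on all skew-primitive spaces to injectivity on $C_1$, and in $(c)\Rightarrow(a)$, which propagates injectivity from $C_1$ up the entire coradical filtration.

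For $(b)\Rightarrow(c)$ I would first invoke the Taft--Wilson description of the first term of the coradical filtration of a pointed coalgebra: $C_1=C_0\oplus\bigoplus_{g,h\in\G(C)}P'_{g,h}$, where $P'_{g,h}$ is a fixed complement of the line of trivial skew-primitives $\K(g-h)$ inside $\Pp_{g,h}(C)$ (one checks $\Pp_{g,h}(C)\cap C_0=\K(g-h)$ for $g\neq h$, and is $0$ for $g=h$). The first step is to note that $f$ is injective on $\G(C)$: since $g-h\in\Pp_{g,h}(C)$ is a nonzero skew-primitive whenever $g\neq h$, hypothesis $(b)$ forces $f(g)\neq f(h)$, so that $f|_{C_0}$ is injective and the assignment $(g,h)\mapsto(f(g),f(h))$ is injective on pairs. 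The second step uses that $f$ is a coalgebra map, whence $f(\Pp_{g,h}(C))\subseteq\Pp_{f(g),f(h)}(D)$, and that these images land in distinct summands of $D_1$. I would then take $x=x_0+\sum_{g,h}x_{g,h}\in\ker f$ with $x_0\in C_0$ and $x_{g,h}\in P'_{g,h}$, and chase it through the decomposition of $D_1$: summand-independence forces each $f(x_{g,h})$ into $\K(f(g)-f(h))\subseteq D_0$, so that $x_{g,h}\in\K(g-h)\cap P'_{g,h}=0$ by $(b)$, and finally $f(x_0)=0$ yields $x_0=0$ by injectivity on $C_0$.

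The crux is $(c)\Rightarrow(a)$, which is the Heyneman--Radford theorem. Here I would set $K=\ker f$ and observe that, $f$ being a coalgebra map, $K$ is a coideal, $\epsilon(K)=0$ and $\Delta(K)\subseteq K\otimes C+C\otimes K$, while $f$ respects the coradical filtration, $f(C_n)\subseteq D_n$. The goal is $K=0$, which I would establish by proving $K\cap C_n=0$ for all $n$ by induction. The cases $n=0,1$ are exactly hypothesis $(c)$ (together with $C_0\subseteq C_1$). For the inductive step, assuming $K\cap C_{n-1}=0$ with $n\geq 2$, I would take $w\in K\cap C_n$ and exploit two facts about $\Delta(w)$: the wedge/filtration identity $C_n=C_{n-1}\wedge C_0$, giving $\Delta(w)\in C_{n-1}\otimes C+C\otimes C_0$, and the relation $(\id\otimes f)\Delta(w)\in K\otimes D$ coming from $\Delta_D(f(w))=0$. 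Combining these with the coideal property and the inductive hypothesis — which makes $f|_{C_{n-1}}$ injective — forces $w\in C_{n-1}$, hence $w\in K\cap C_{n-1}=0$.

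I expect the inductive bookkeeping in $(c)\Rightarrow(a)$ to be the main obstacle. Unlike the other implications, it cannot be obtained from any grading or generation-in-degree-one argument, since coradical filtrations need not be generated in degree one; one must instead track carefully the interplay between the coideal condition on $K$, the wedge description $C_n=C_{n-1}\wedge C_0$, and the partial injectivity already secured. This is precisely the technical heart of the Heyneman--Radford theorem, which I would either reproduce in detail or cite directly to complete the proof.
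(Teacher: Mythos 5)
The paper does not prove this proposition; it is quoted verbatim from Radford's book \cite[Proposition 4.3.3]{R11}, so there is no in-paper argument to measure your proof against. Judged on its own terms, your plan is the standard one and the parts you actually carry out are sound: $(a)\Rightarrow(b)$ is trivial, and your $(b)\Rightarrow(c)$ argument via the Taft--Wilson decomposition $C_1=C_0\oplus\bigoplus_{g,h}P'_{g,h}$ is correct, resting on the two standard facts that $\Pp_{g,h}(C)\cap C_0=\K(g-h)$ for $g\neq h$ (and $=0$ for $g=h$) and that skew-primitive spaces attached to distinct pairs of group-likes of $D$ are independent modulo $D_0$. Note that $D$ need not be pointed, so ``land in distinct summands of $D_1$'' should be read as this independence-mod-$D_0$ statement rather than as a Taft--Wilson decomposition of $D_1$; with that reading the argument goes through.

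The one place where the proposal falls short of a proof is the inductive step of $(c)\Rightarrow(a)$. The sentence ``combining these with the coideal property and the inductive hypothesis forces $w\in C_{n-1}$'' states the goal rather than proving it: the two containments $\Delta(w)\in C_{n-1}\otimes C+C\otimes C_0$ and $(\id\otimes f)\Delta(w)\in K\otimes D$ cannot simply be intersected as tensor-product subspaces, and the genuine Heyneman--Radford induction requires a further device --- typically projecting to $(D/D_{n-1})\otimes D$, or using the Taft--Wilson description of $C_n$ modulo $C_{n-1}$ together with the linear independence of the images of the group-likes --- before one can conclude $w\in C_{n-1}$. You flag this yourself and offer to cite the Heyneman--Radford theorem for this implication; doing so closes the gap and is, at the level of the whole proposition, exactly what the paper does by citing \cite{R11}.
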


\section{On pointed Hopf algebras of dimension $p^4$}\label{sec:p4}
Let $p$ be a prime number and $\Char\K=p$. We study pointed Hopf algebras of dimension $p^4$ with some properties, which will be used to obtain our main results.  In particular, we obtain some classification results of pointed Hopf algebras of dimension $p^4$ with some properties. We mention that N. Andruskiewitsch and H. J. Schneider classified pointed complex Hopf algebra of $p^4$ for an odd prime $p$ \cite{AS00b}; S. Caenepeel, S. D\u{a}sc\u{a}lescu and S. Raianu classified all pointed complex Hopf algebras of dimension $16$ \cite{CDR00}; and the Hopf subalgebra of dimension $p^3$   have already appeared in \cite{NW}.

\begin{lem}\label{lem:cyclic-groups-dimV=2}
Let $\Char\K=p$, $\Z_{p^s}:=\langle g\rangle$ and $V$ be an object  in ${}_{\Z_{p^s}}^{\Z_{p^s}}\mathcal{YD}$ such that $\dim\BN(V)=p^2$. Then $\dim V=2$. Furthermore,
\begin{itemize}
  \item If $\BN(V)$ is of diagonal type, then $V\cong M_{i,1}\oplus M_{j,1}$ for $i,j\in\I_{0,p^s-1}$  or $M_{k,2}$ for $p\mid k\in\I_{0,p^s-1}$ and hence $\BN(V)\cong\K[x,y]/(x^p,y^p)$.
  \item If $\BN(V)$ is not of diagonal type, then $p>2$, $V\cong M_{i,2}$ for $p\nmid i\in\I_{1,p^s-1}$  and hence
   $\BN(V)\cong \K\langle x,y\rangle/(x^p,y^p,yx-xy+\frac{1}{2}x^2)$.
\end{itemize}
\end{lem}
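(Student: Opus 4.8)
The plan is to reduce everything to the classification of indecomposable Yetter-Drinfeld modules in Proposition \ref{pro:indecomposable-object-cyclic-p-group} and then to separate the diagonal from the non-diagonal case by a degree-$2$ computation with the quantum symmetrizer. Write $V = \bigoplus_{j=1}^{k} M_{i_j, r_j}$ as a direct sum of indecomposables. Since on $M_{i,r}$ the element $g^i$ acts by $g^i\cdot v_m = v_m + i v_{m-1} + \cdots$, the induced braiding $c(v_a\otimes v_b) = g^{i}\cdot v_b\otimes v_a$ is the flip whenever $p\mid i$ (so $M_{i,r}$ is of diagonal type with all $q=1$) and is exactly the Jordan type $\mathcal{V}(1,r)$ whenever $p\nmid i$.

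First I would pin down $\dim V = 2$. The bottom vectors $v_1^{(j)}$ of the blocks span a Yetter-Drinfeld submodule $V'$ of dimension $k$ on which $g$ acts trivially; its braiding is the flip, so $\BN(V')\cong\K[t_1,\dots,t_k]/(t_1^p,\dots,t_k^p)$ has dimension $p^k$. As $\BN(V')$ is a subalgebra of $\BN(V)$, we get $p^k\le p^2$, i.e. $k\le 2$. If some block has $r_j\ge 2$, then $M_{i_j,r_j}$ is either of diagonal type (flip) with $\dim\BN = p^{r_j}$ or of Jordan type with $\dim\BN(\mathcal{V}(1,r_j))\ge p^{r_j}$ by the Jordan-type example; combining with the remaining generator(s) through the standard lower bound $\dim\BN(W_1\oplus W_2)\ge\dim\BN(W_1)\,\dim\BN(W_2)$ (or directly via Lemma \ref{lem:R-V}) forces $\dim\BN(V) > p^2$ as soon as $\dim V\ge 3$. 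Hence $\dim V = 2$, and the only possibilities are $V\cong M_{i,1}\oplus M_{j,1}$, $V\cong M_{k,2}$ with $p\mid k$, or $V\cong M_{i,2}$ with $p\nmid i$.

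In the diagonal cases ($M_{i,1}\oplus M_{j,1}$, or $M_{k,2}$ with $p\mid k$) the braiding is the flip on a two-dimensional space, so $\BN(V)$ is the restricted symmetric algebra: the degree-$2$ kernel of $\Omega_2 = \id + c$ forces $xy = yx$, while the height relations $x^p = y^p = 0$ follow from Proposition \ref{proJ}. This gives $\BN(V)\cong\K[x,y]/(x^p,y^p)$ of dimension $p^2$, uniformly in $p$.

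The main obstacle, and where the characteristic intervenes, is the non-diagonal case $V\cong M_{i,2}$ with $p\nmid i$, i.e. $\mathcal{V}(1,2)$ with basis $\{x = v_1,\,y = v_2\}$. Here I would compute $\Omega_2$ on $V\otimes V$ explicitly; the decisive entry is $\Omega_2(x\otimes x) = 2\,x\otimes x$, which vanishes precisely when $p = 2$. If $p = 2$ this puts $x\otimes x$ into $\J^2(V)$ and one checks $\dim\J^2(V) = 1$, so $\dim\BN^2(V) = 3$ and already $\dim\BN(V)\ge 1 + 2 + 3 = 6 > 4 = p^2$, excluding this case and so forcing $p > 2$. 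If $p > 2$ the kernel of $\Omega_2$ is one-dimensional, spanned by $x\otimes x - 2\,x\otimes y + 2\,y\otimes x$, which yields the single quadratic relation $yx - xy + \tfrac12 x^2 = 0$; adjoining the height relations $x^p = y^p = 0$ (again via Proposition \ref{proJ}) and exhibiting the PBW basis $\{x^a y^b : 0\le a,b\le p-1\}$ gives $\BN(V)\cong\K\langle x,y\rangle/(x^p,y^p,\,yx - xy + \tfrac12 x^2)$ of dimension $p^2$. The hardest part is verifying that no further relations appear in higher degrees when $p > 2$ (equivalently, that the stated presentation already has dimension $p^2$), for which I would rely on the known structure of the Jordan plane together with the lower bound $\dim\BN(\mathcal{V}(1,2))\ge p^2$ from the Jordan-type example.
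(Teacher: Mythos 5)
Your proposal is correct and follows the same skeleton as the paper's proof --- reduce to the list of indecomposables in Proposition \ref{pro:indecomposable-object-cyclic-p-group}, then split into the trivially braided and Jordan cases --- but you replace the paper's two external citations with more hands-on arguments. For $\dim V=2$ the paper simply invokes \cite[Proposition 3.9]{NW}, whereas you bound the number of blocks by embedding the trivially braided span of the bottom vectors (giving $p^k\le p^2$, so $k\le 2$) and then appeal to tensor-product lower bounds for blocks of length $\ge 2$; this is more self-contained, though your blanket claim that every block $M_{i,r}$ is either flip-braided or exactly Jordan is only accurate for $r\le 2$ (for $r\ge 3$ the coefficients $\binom{i}{j}$ with $j\ge 2$ can survive mod $p$ even when $p\mid i$, e.g.\ $M_{2,3}$ in characteristic $2$), so the single-block case $M_{i,r}$ with $r\ge 3$ is not fully covered by your dichotomy and you end up leaning on the same kind of external dimension bound the paper cites. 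For the Jordan case the paper quotes \cite[Theorems 3.1 and 3.5]{CLW} both to exclude $p=2$ and to obtain the presentation, whereas you get the exclusion of $p=2$ and the relation $yx-xy+\tfrac12x^2$ from an explicit computation of $\ker\Omega_2$; your computations check out (the kernel is one-dimensional in either characteristic, and for $p=2$ the membership $x\otimes x\in\ker\Omega_2$ gives $\dim\BN^2(V)=3$, hence $\dim\BN(V)\ge 6>4$). The final point --- that no relations beyond $x^p$, $y^p$ and the quadratic one occur for $p>2$ --- you defer, as the paper does, to the known structure of the restricted Jordan plane, so both proofs ultimately rest on the same input there.
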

\begin{proof}
Assume that $\dim V=1$. Then by Proposition \ref{pro:indecomposable-object-cyclic-p-group}, $V\cong M_{i,1}$ for $i\in\I_{0,p^s-1}$  and hence  $\dim\BN(V)=p$, a contradiction.

Assume that $\dim V=2$. Then by Proposition \ref{pro:indecomposable-object-cyclic-p-group}, $V\cong M_{i,1}\oplus M_{j,1}$ for $i,j\in\I_{0,p^s-1}$  or $M_{k,2}$ for $k\in\I_{0,p^s-1}$.

If $V\cong M_{i,1}\oplus M_{j,1}$ for $i,j\in\I_{0,p^s-1}$, then $V$ is of diagonal type with trivial braiding, which implies that  $\BN(V)\cong\K[x,y]/(x^p,y^p)$.

If $V\cong M_{k,2}:=\K\{x,y\}$ for $k\in\I_{0,p^s-1}$, then the braiding of $V$ is
\begin{align*}
  c(\left[\begin{array}{ccc} x\\y \end{array}\right]\otimes\left[\begin{array}{ccc} x~y \end{array}\right])=
   \left[\begin{array}{ccc}
    x\otimes x    & (y+kx)\otimes x  \\
   x\otimes y   & (y+kx)\otimes y
   \end{array}\right].
  \end{align*}

If $p\mid k$, then $V$ is of diagonal type with trivial braiding and hence $\BN(V)\cong\K[x,y]/(x^p,y^p)$.

If $p\nmid k$, then $V$ is of Jordan type and hence by \cite[Theorem 3.1 and 3.5]{CLW}, $p>2$ and $\BN(V)\cong \K\langle x,y\rangle/(x^p,y^p,yx-xy+\frac{1}{2}x^2)$.

Assume that $\dim V>2$.  Then by Proposition \ref{pro:indecomposable-object-cyclic-p-group}, $V\cong \oplus_{i=1}^rM_{n_i,m_i}$ with $n_i\in\I_{0,p^s-1}$, $m_i>0$ and $\sum_{i=0}^rm_i=\dim V$.  If there exists some $i\in\I_{1,r}$ such that $m_i\geq 2$, then there must be a two-dimensional subobject $W$ of $V$, which is isomorphic to    $M_{n_i,2}$; otherwise $W\cong M_{i,1}\oplus M_{j,1}$ for $i,j\in\I_{0,p^s-1}$. Hence from the preceding discussions, $\dim\BN(V)>\dim\BN(W)\geq p^2$. Consequently, the assertions hold.
\end{proof}
\begin{rmk}\label{rmk-11-1--1}
Let $G$ be a finite group and $V\in{}_G^G\mathcal{YD}$ with $\dim V=2$. Then  by \cite[Proposition 3.3]{NW}, $V$ is either of diagonal type or of Jordan type. In particular, if $V$ is of Jordan type such that $\dim\BN(V)=p^2$, then $\Char\K=p>2$.
\end{rmk}

\begin{lem}\label{lem:cyclic-groups-dimV=3}
Let $\Char\K=p$, $\Z_p:=\langle g\rangle$ and $V$ be a decomposable object  in ${}_{\Z_p}^{\Z_p}\mathcal{YD}$ such that $\dim\BN(V)=p^3$. Then $\dim V=3$. Furthermore,
\begin{itemize}
  \item If $\BN(V)$ is of diagonal type, then $V\cong M_{i,1}\oplus M_{j,1}\oplus M_{k,1}$ for $i,j,k\in\I_{0,p-1}$  or $M_{0,2}\oplus M_{0,1}$ and hence $\BN(V)\cong\K[x,y,z]/(x^p,y^p,z^p)$.
  \item If $\BN(V)$ is not of diagonal type, then $p>2$, $V\cong M_{i,2}\oplus M_{0,1}$ for $i\in\I_{1,p-1}$ and hence
   $\BN(V)\cong \K\langle x,y,z\rangle/(x^p,y^p,z^p,yx-xy+\frac{1}{2}x^2,[x,z],[y,z])$.
\end{itemize}
\end{lem}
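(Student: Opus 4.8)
The plan is to bound $\dim V$ with Lemma~\ref{lem:R-V}, list the decomposable shapes via Proposition~\ref{pro:indecomposable-object-cyclic-p-group}, and then compute the braiding \eqref{equbraidingYDcat} on each. First I apply Lemma~\ref{lem:R-V} with $G=C_p$ (the case $m=1$): since $\dim\BN(V)=p^3$, we cannot have $\dim V>3$, so $\dim V\le3$. As $V$ is decomposable, $\dim V\ge2$; and if $\dim V=2$ the only option is $V\cong M_{i,1}\oplus M_{j,1}$, on which $g$ acts trivially, so the braiding is the flip and $\BN(V)\cong\K[x,y]/(x^p,y^p)$ has dimension $p^2\ne p^3$. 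Hence $\dim V=3$, and decomposability leaves exactly two shapes: $M_{i,1}\oplus M_{j,1}\oplus M_{k,1}$ and $M_{i,2}\oplus M_{j,1}$.

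For the first shape all three summands are one-dimensional with trivial $g$-action, so \eqref{equbraidingYDcat} gives the flip; thus $V$ is of diagonal type with every $q_{ab}=1$, and $\BN(V)\cong\K[x,y,z]/(x^p,y^p,z^p)$ has dimension $p^3$, with no constraint on $i,j,k$. This disposes of the diagonal branch for this shape.

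The real content is the shape $M_{i,2}\oplus M_{j,1}$; write $x,y$ for a basis of $M_{i,2}$ (so $g\cdot x=x$, $g\cdot y=y+x$, $\delta(x)=g^i\otimes x$, $\delta(y)=g^i\otimes y$) and $z$ for $M_{j,1}$. Evaluating \eqref{equbraidingYDcat}, the braiding between the two summands is the flip except for $c(z\otimes y)=(y+jx)\otimes z$; consequently the summands braid symmetrically, $c_{21}c_{12}=\id$ on $M_{i,2}\otimes M_{j,1}$, precisely when $p\mid j$, i.e. $j=0$. When $j=0$ the symmetric coupling makes $z$ central and yields the tensor factorization $\BN(V)\cong\BN(M_{i,2})\otimes\K[z]/(z^p)$ (standard for summands that braid symmetrically), so it remains to feed $M_{i,2}$ into Lemma~\ref{lem:cyclic-groups-dimV=2}: for $i=0$ the first factor is $\K[x,y]/(x^p,y^p)$, giving the diagonal case $M_{0,2}\oplus M_{0,1}$; for $i\ne0$ it is of Jordan type, so $p>2$ is forced and $\BN(M_{i,2})\cong\K\langle x,y\rangle/(x^p,y^p,yx-xy+\tfrac12x^2)$ is $p^2$-dimensional, whence the tensor factorization produces exactly the stated presentation. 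In characteristic $2$ this Jordan factor is excluded: $M_{1,2}\cong\mathcal{V}(1,2)$ has $\dim\BN(M_{1,2})\ge p^2$ but, by Lemma~\ref{lem:cyclic-groups-dimV=2}, $\ne p^2$, so $\dim\BN(V)=p\,\dim\BN(M_{1,2})>p^3$.

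The hardest step is to exclude $p\nmid j$, i.e. to prove that a non-symmetric cross-coupling forces $\dim\BN(V)>p^3$. No two-dimensional subobject of $V$ is of Jordan type, so there is no evident large Nichols subalgebra to embed and the obstruction is genuinely three-dimensional. I would detect it through the quadratic relations $\J^2(V)=\ker\Omega_2=\ker(\id+c)$: the coupling $c(z\otimes y)=(y+jx)\otimes z$ links $z\otimes y$ to $x\otimes z$ and thereby destroys the relation $yz-zy$ present when $j=0$, so $\dim\J^2(V)$ drops by exactly one relative to the model with $j=0$, and $\dim\BN^2(V)$ rises by one. Converting this single missing relation into the strict inequality $\dim\BN(V)>p^3$—by a PBW/Gr\"obner-basis growth argument in the spirit of the proof of \cite[Proposition~3.9]{NW}, with the characteristic-$2$ Jordan corners handled as in the previous paragraph—is where the genuine work lies, rather than in any individual braiding computation.
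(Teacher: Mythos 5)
Your overall architecture matches the paper's: bound $\dim V\le 3$ via Lemma~\ref{lem:R-V}, use decomposability and Proposition~\ref{pro:indecomposable-object-cyclic-p-group} to reduce to the shapes $M_{i,1}\oplus M_{j,1}\oplus M_{k,1}$ and $M_{i,2}\oplus M_{j,1}$, observe that the cross-braiding is symmetric iff $p\mid j$, and in that case factor $\BN(V)\cong\BN(M_{i,2})\otimes\BN(M_{0,1})$ (the paper cites \cite[Theorem 2.2]{G} for this) and feed $M_{i,2}$ into Lemma~\ref{lem:cyclic-groups-dimV=2}. All of that is fine. But you have not proved the one claim on which the lemma actually hinges, and you say so yourself: that $p\nmid j$ forces $\dim\BN(V)>p^3$. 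Your proposed route --- count that $\ker(\id+c)$ loses one dimension relative to the $j=0$ model and then ``convert'' this into a growth estimate --- is not a proof and is not obviously convertible into one: $\J(V)=\ker\Omega_n$ has components in every degree $n\ge 2$, so having one fewer quadratic relation does not by itself prevent compensating relations in higher degree, and no PBW basis is exhibited. The paper closes this case by citation and explicit computation: for $i=0$, $j\neq 0$ the braided vector space is the one in \cite[7.1]{AAH19}, where $\dim\BN(V)=2^pp^2>p^3$ for $p>2$, and for $p=2$ one checks directly that the $16$ monomials $x^ay^b[z,x]^cz^d$ ($a,b,c,d\in\I_{0,1}$) are linearly independent; for $i\neq 0$, $j\neq 0$ it invokes \cite[Theorem 3.1]{CLW} when $p=2$ and \cite{AAH19} when $p>2$. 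Without some such input your argument does not establish the second bullet of the statement.

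A secondary error: your assertion that ``no two-dimensional subobject of $V$ is of Jordan type'' in the $p\nmid j$ case is false whenever $i\neq 0$, since $M_{i,2}$ itself is then of Jordan type (its internal braiding sends $y\otimes y$ to $(y+ix)\otimes y$). Far from there being ``no evident large Nichols subalgebra to embed,'' this subobject is exactly what the paper uses to kill the subcase $i\neq 0$, $j\neq 0$ in characteristic $2$. The rest of your write-up --- the $\dim V=2$ exclusion, the trivial-braiding shape, the $j=0$ factorization, and the characteristic-$2$ exclusion of the Jordan factor via Lemma~\ref{lem:cyclic-groups-dimV=2} together with $\dim\BN(\mathcal{V}(1,2))\ge p^2$ --- is correct.
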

\begin{proof}
We first claim that $\dim V\geq 3$. Indeed,  if $\dim V=1$, then $V\cong M_{i,1}$ and hence $\dim\BN(V)=p$; if $\dim V=2$, then $V\cong M_{i,1}\oplus M_{j,1}$  or $M_{i,2}$ for $i,j\in\I_{0,p-1}$ and hence $V$ is of diagonal type or of Jordan type. By \cite[Proposition 3.7]{NW}, $\dim\BN(V)=p^2$ or $16$. Consequently, the claim follows.

Assume that $\dim V=3$. Then by assumption,  $V\cong M_{i,1}\oplus M_{j,1}\oplus M_{k,1}$  or $M_{i,2}\oplus M_{j,1}$ for $i,j,k\in\I_{0,p-1}$.

If $V\cong M_{i,1}\oplus M_{j,1}\oplus M_{k,1}$  for $i,j,k\in\I_{0,p-1}$, then  $V$ is of diagonal type with trivial braiding and hence $\BN(V)\cong\K[x,y,z]/(x^p,y^p,z^p)$.

If $V\cong M_{i,2}\oplus M_{j,1}:=\K\{x,y\}\oplus\K\{z\}$ for $i,j\in\I_{0,p-1}$, then the braiding of $V$ is
 \begin{align*}
  c(\left[\begin{array}{ccc} x\\y\\z\end{array}\right]\otimes\left[\begin{array}{ccc} x~y~z\end{array}\right])=
   \left[\begin{array}{ccc}
    x\otimes x    & (y+ix)\otimes x & z \otimes x\\
   x\otimes y   & (y+ix)\otimes y & z\otimes y\\
   x\otimes z, & (y+jx)\otimes z& z\otimes z
         \end{array}\right].
  \end{align*}

If   $i=0=j$, then $V$ has trivial braiding  and hence $\BN(V)\cong\K[x,y,z]/(x^p,y^p,z^p)$.

If $i=0$ and $j\neq 0$, then $V$ is not of diagonal type, which also appeared in \cite[7.1]{AAH19}. We claim that $\dim\BN(V)>p^3$. Indeed, if $p>2$, then by \cite[Proposition 7.1]{AAH19}, $\dim\BN(V)=2^pp^2$; if $p=2$, then the proof following the same lines. Indeed, it is easy to show that $\{x^iy^j[z,x]^kz^k\}_{i,j,k,l\in\I_{0,1}}$ is linearly independent in $\BN(V)$.

If $i\neq 0$ and $j\neq 0$, then without loss of generality, we assume that $i=1$. In this case, $V$ is  not of diagonal type, which also appeared in \cite[4.1]{AAH19}. If $p=2$, then by \cite[Theorem 3.1]{CLW}, $\dim\BN(V)>16$, a contradiction. If $p>2$, then by \cite[Proposition 4.10.]{AAH19}, $\dim\BN(V)>p^3$, a contradiction.

Assume that $\dim V>3$. Then  by Proposition \ref{pro:indecomposable-object-cyclic-p-group}, there must be a three-dimensional subobject $W$ of $V$, which is isomorphic to $M_{i,3}$,  $M_{i,2}\oplus M_{j,1}$ or $M_{i,1}\oplus M_{j,1}\oplus M_{k,1}$   for $i,j,k\in\I_{0,p-1}$. Hence from the preceding discussions and Example \ref{ex-1},   we have $\dim\BN(V)>\dim\BN(W)\geq p^3$.

Consequently, $\dim V=3$; if $V$ is not of diagonal type, then $p>2$ and $V\cong M_{i,2}\oplus M_{0,1}$ for $i\in\I_{1,p-1}$.  Clearly, $c^2=\id$ if and only if $j=0$. Hence by \cite[Theorem 2.2]{G}, $\BN(V)\cong\BN(M_{i,2})\otimes\BN(M_{0,1})$.
\end{proof}
\begin{rmk}
If $p=2$, then by Proposition \ref{pro:indecomposable-object-cyclic-p-group}, the objects of dimension greater than 2 in ${}_{\Z_2}^{\Z_2}\mathcal{YD}$ must be decomposable in ${}_{\Z_2}^{\Z_2}\mathcal{YD}$.
\end{rmk}

\begin{lem}\label{lem:p4-x1y1z1}
Let $p$ be a prime number and $\Char\K=p$. Let $H$ be a pointed Hopf algebra over $\K$ of dimension $p^4$. Assume that $\gr H=\K[g,x,y,z]/(g^p-1,x^p,y^p,z^p)$ with $g\in\G(H)$ and $x,y,z\in\Pp_{1,g}(H)$. Then the defining relations of $H$ are
\begin{gather*}
g^p=1,\quad gx-xg=\lambda_1g(1-g),\quad  gy-yg=\lambda_2g(1-g),\quad gz-zg=\lambda_3g(1-g),\\
x^p-\lambda_1x=0,\quad y^p-\lambda_2y=0,\quad z^p-\lambda_3z=0,\quad
xy-yx-\lambda_2x+\lambda_1y=\lambda_4(1-g^2),\\ xz-zx-\lambda_3x+\lambda_1z=\lambda_5(1-g^2),\quad  yz-zy-\lambda_3y+\lambda_2z=\lambda_6(1-g^2).
\end{gather*}
for some $\lambda_1,\lambda_2,\lambda_3\in\I_{0,1},~\lambda_4,\lambda_5,\lambda_6\in\K$ satisfying the conditions
\begin{align*}
\lambda_2\lambda_5=\lambda_3\lambda_4+\lambda_1\lambda_6.
\end{align*}
\end{lem}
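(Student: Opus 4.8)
The plan is to realize $H$ as a lifting of $\gr H$ and to determine the liftable relations degree by degree, controlling each skew-primitive correction by the small spaces $\Pp_{1,g^{j}}(H)$. From the hypothesis one reads off that $H_0=\K[\G(H)]$ with $\G(H)=\langle g\rangle\cong C_p$, so $g^p=1$, and that $H$ is generated by $g$ together with lifts $x,y,z\in\Pp_{1,g}(H)$ of a basis of the infinitesimal braiding $V=R(1)$; since $\gr H$ is commutative the braiding on $V$ is trivial, so $g$ acts trivially on $V$ and $\delta(v)=g\otimes v$ for $v\in V$. As $g$ is group-like, $gxg^{-1}-x$ again lies in $\Pp_{1,g}(H)$, and triviality of the $g$-action in $\gr H$ forces $gxg^{-1}-x\in H_0$; hence $gxg^{-1}-x\in\Pp_{1,g}(H)\cap H_0=\K(1-g)$, giving $gx-xg=\lambda_1 g(1-g)$ and likewise for $y,z$. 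Rescaling $x,y,z$ I may assume $\lambda_1,\lambda_2,\lambda_3\in\I_{0,1}$.

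For the $p$-th power relations I would first show $\Pp(H)=0$. Since $V=V_g$ with $g\neq1$, the coradically graded $\gr H$ has no nonzero $(1,1)$-primitives; any primitive of $H$ lies in $H_1$, maps to a primitive of $\gr H$, hence lands in $H_0=\K[C_p]$, where the only primitive is $0$. I would then expand $\Delta(x^p)=(x\otimes1+g\otimes x)^p$ in $H\otimes H$ using the Jacobson formula of Proposition \ref{proJ} with $a=x\otimes1$, $b=g\otimes x$: here $a^p=x^p\otimes1$, $b^p=1\otimes x^p$, and the correction terms collapse because $(a)(\AdR b)^2=0$, as $gx-xg=\lambda_1g(1-g)$ commutes with $g$ (cf. Lemma \ref{pqlem1}). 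The outcome is that $x^p-\lambda_1 x$ is primitive, so $x^p=\lambda_1 x$; symmetrically $y^p=\lambda_2 y$ and $z^p=\lambda_3 z$.

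For the mixed relations I would compute the coproduct of the candidate elements directly. Using $\Delta(x)=x\otimes1+g\otimes x$, $\Delta(y)=y\otimes1+g\otimes y$ and the commutators $[g,x]$, $[g,y]$ found above, a short calculation gives
\begin{align*}
\Delta\bigl(xy-yx-\lambda_2x+\lambda_1y\bigr)=\bigl(xy-yx-\lambda_2x+\lambda_1y\bigr)\otimes1+g^2\otimes\bigl(xy-yx-\lambda_2x+\lambda_1y\bigr),
\end{align*}
so $xy-yx-\lambda_2x+\lambda_1y\in\Pp_{1,g^2}(H)$. The same degree argument as before shows $\Pp_{1,g^2}(H)=\K(1-g^2)$ (which is $\Pp(H)=0$ when $p=2$, since then $V_{g^2}=0$), whence $xy-yx-\lambda_2x+\lambda_1y=\lambda_4(1-g^2)$; the analogous computations produce the relations for $[x,z]$ and $[y,z]$ with parameters $\lambda_5,\lambda_6$.

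Finally, the ambiguity condition encodes the compatibility of the three mixed relations. Substituting them into the associative Jacobi identity $[[x,y],z]+[[y,z],x]+[[z,x],y]=0$ and using $[g^2,x]=2\lambda_1(g^2-g^3)$, $[g^2,y]=2\lambda_2(g^2-g^3)$, $[g^2,z]=2\lambda_3(g^2-g^3)$, all terms linear in $x,y,z$ cancel and one is left with $2(\lambda_2\lambda_5-\lambda_3\lambda_4-\lambda_1\lambda_6)(1-g^3)=0$, which forces $\lambda_2\lambda_5=\lambda_3\lambda_4+\lambda_1\lambda_6$. That these relations are defining would then follow from a PBW/Diamond-Lemma argument: modulo the relations every word reduces to one of $\{x^ay^bz^cg^d:0\le a,b,c,d\le p-1\}$, the overlaps being confluent precisely under the ambiguity condition, so the presented algebra has dimension $p^4$ and the canonical surjection onto $H$ is an isomorphism. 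I expect the degree-$p$ step to be the main obstacle: evaluating $\Delta(x^p)$ sharply enough to isolate the primitive $x^p-\lambda_1 x$, together with the parallel bookkeeping of higher commutators against $g$; the small primes $p=2,3$, where $1-g^2$ or $1-g^3$ degenerates, also need separate (but routine) attention.
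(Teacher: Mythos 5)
Your proposal follows the same overall strategy as the paper's proof: pin $gx-xg$ inside $\Pp_{1,g}(H)\cap H_0=\K(1-g)$, use $\Pp(H)=0$ for the $p$-th power relations, use $\Pp_{1,g^2}(H)=\K\{1-g^2\}$ for the mixed commutators, and finish with a Diamond Lemma count. The one genuinely different step is your derivation of the ambiguity condition from the Jacobi identity $[[x,y],z]+[[y,z],x]+[[z,x],y]=0$ rather than from an explicit resolution of the overlap $x(yz)=(xy)z$; this is cleaner, and reassuringly both computations terminate in the identical expression $2(\lambda_2\lambda_5-\lambda_3\lambda_4-\lambda_1\lambda_6)(1-g^3)=0$, so the two routes carry exactly the same information --- including the fact, which neither you nor the paper dwells on, that for $p=2$ and $p=3$ this identity is automatic and forces nothing. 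Bear in mind, though, that the Jacobi identity only produces a necessary condition valid in $H$; to conclude the relations are \emph{defining} you still need confluence of all the remaining overlaps ($a^pb=a^{p-1}(ab)$, $a(b^p)=(ab)b^{p-1}$, $g(ab)=(ga)b$), which the paper verifies via Lemmas \ref{pqlem1} and \ref{pqlem2} and which you only assert.

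There is one flaw to repair in your degree-$p$ step. With $a=x\otimes 1$ and $b=g\otimes x$, the Jacobson correction terms $s_i(a,b)$ in Proposition \ref{proJ} are read off from $(a)(\AdR(\lambda a+b))^{p-1}$, not from powers of $\AdR b$ alone. It is true that $[[a,b],b]=0$, but the iterated brackets against $a$ do not vanish: one finds $(a)(\AdR(\lambda a+b))^{p-1}=-\lambda^{p-2}\bigl((g)(\AdR x)^{p-1}\bigr)\otimes x=\lambda^{p-2}\lambda_1(1-g)\otimes x$ by Lemma \ref{pqlem1}(2), and it is precisely this surviving term that yields $\Delta(x^p)=x^p\otimes 1+1\otimes x^p+\lambda_1(g-1)\otimes x$. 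If the corrections all collapsed as you claim, you would conclude that $x^p$ itself is primitive and hence zero, which is false when $\lambda_1=1$. Your stated conclusion ($x^p-\lambda_1x\in\Pp(H)$) is the right one, and for $p=2$ the issue is invisible since only $s_1=[a,b]$ occurs, but for odd $p$ the reason you give does not support it; the fix is exactly the paper's computation.
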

\begin{proof}

It follows by a direct computation that
\begin{align*}
\Delta(gx-xg)=(gx-xg)\otimes g+g^2\otimes (gx-xg)\Rightarrow gx-xg\in\Pp_{g,g^2}(H)\cap H_0.
\end{align*}
Hence  $gx-xg=\lambda_1g(1-g)$ for some $\lambda_1\in\K$. By rescaling $x$, we can take $\lambda_1\in\I_{0,1}$.  Then by Proposition \ref{proJ} and Lemma \ref{pqlem1},
\begin{align*}
\Delta(x^p)=(x\otimes 1+g\otimes x)^p=x^p\otimes 1+1\otimes x^p +\lambda_1(g-1)\otimes x,
\end{align*}
which implies that $x^p-\lambda_1x\in\Pp(H)$. Since $\Pp(H)=0$, it follows that $x^p-\lambda_1x=0$ in $H$. Similarly, we have
\begin{align*}
gy-yg=\lambda_2g(1-g),\quad y^p-\lambda_2y=0,\quad\lambda_2\in\I_{0,1};\\
gz-zg=\lambda_3g(1-g),\quad z^p-\lambda_3z=0,\quad\lambda_3\in\I_{0,1}.
\end{align*}
Then a direct computation shows that
\begin{align*}
\Delta(xy-yx)=(xy-yx)\otimes 1+\lambda_2g(1-g)\otimes x-\lambda_1g(1-g)\otimes y+g^2\otimes (xy-yx),
\end{align*}
which implies that  $xy-yx-\lambda_2x+\lambda_1y\in\Pp_{1,g^2}(H)$. Since $\Pp_{1,g^2}(H)=\K\{1-g^2\}$, it follows that $xy-yx-\lambda_2x+\lambda_1y=\lambda_4(1-g^2)$ for some $\lambda_4\in\K$. Similarly, we have
\begin{align*}
xz-zx-\lambda_3x+\lambda_1z=\lambda_5(1-g^2),\quad yz-zy-\lambda_3y+\lambda_2z=\lambda_6(1-g^2),
\end{align*}
 for some  $\lambda_5,\lambda_6\in\K$.

 Applying the Diamond Lemma \cite{B} to show that $\dim H=p^4$, it suffices to show that the following ambiguities
 \begin{gather*}
a^pb=a^{p-1}(ab), \quad a(b^p)=(ab)b^{p-1},\quad b<a,~\text{and}~a,b\in\{g,x,y,z\},\\
(ab)c=a(bc),\quad c<b<a~\text{and}~a,b,c\in\{g,x,y,z\},
 \end{gather*}
are resolvable.

By Lemma \ref{pqlem1}, $[g, x^p]=(g)(\AdR x)^p=\lambda_1^{p-1}[g,x]$ and $[g^p, x]=pg^{p-1}[g,x]=0$. Then a direct computation shows that the ambiguities $(g^{p})x=g^{p-1}(gx)$ and $g (x^p)=(gx)x^{p-1}$ are resolvable. Similarly, $(g^{p})a=g^{p-1}(ga)$ and $g (a^p)=(ga)a^{p-1}$ are resolvable for $a\in\{y,z\}$.

By Lemma \ref{pqlem2}, $[x,y^p]=(x)(\AdR y)^p=\lambda_2^{p-1}[x,y]$ and $[x^p,y]=(\AdL x)^p(y)=(-\lambda_1)^{p-1}[x,y]$. Then a direct computation shows that the ambiguity $(x^{p})y=x^{p-1}(xy)$ and $g (x^p)=(gx)x^{p-1}$ are resolvable. Similarly, $a^pb=a^{p-1}(ab)$ and $a(b^p)=(ab)b^{p-1}$, for $b<a$, $a,b\in\{x,y,z\}$.

Now we claim that the ambiguity $g(xy)=(gx)y$ is resolvable. Indeed
\begin{align*}
g(xy)&=g(yx+\lambda_2x-\lambda_1y+\lambda_4(1-g^2))=(gy)x+\lambda_2gx-\lambda_1gy+\lambda_4g(1-g^2)\\
&=y(gx)+\lambda_2gx+\lambda_2xg-\lambda_2g^2x+\lambda_4g(1-g^2)-\lambda_1yg\\
&=yxg-\lambda_1yg^2+2\lambda_2xg+\lambda_1\lambda_2(g-g^2)-\lambda_2xg^2-2\lambda_1\lambda_2g^2(1-g)+\lambda_4g(1-g^2)\\
&=xyg+\lambda_2x(g-g^2)+\lambda_1yg+\lambda_1\lambda_2(g-g^2)-\lambda_1yg^2-\lambda_1\lambda_2g^2(1-g)\\
&=x(gy)+\lambda_1gy-\lambda_1g^2y=(gx)y.
\end{align*}
Similarly, $g(xz)=(gx)z$ and $g(yz)=(gy)z$ are resolvable.

We claim that the ambiguity $x(yz)=(xy)z$ imposes   $\lambda_2\lambda_5=\lambda_3\lambda_4+\lambda_1\lambda_6$. Indeed,
\begin{align*}
(xy)z&=[yx+\lambda_2x-\lambda_1y+\lambda_4(1-g^2)]z=y(xz)+\lambda_2xz-\lambda_1yz+\lambda_4z-\lambda_4g^2z\\
&=(yz)x+\lambda_3yx-2\lambda_1yz+\lambda_2xz+\lambda_5y(1-g^2)+\lambda_4z(1-g^2)-2\lambda_3\lambda_4g^2(1-g)\\
&=zyx+2\lambda_3yx+\lambda_2[x,z]-2\lambda_1yz+\lambda_5y(1-g^2)+\lambda_6x(1-g^2)+\lambda_4z(1-g^2)\\&\quad
   -2\lambda_1\lambda_6g^2(1-g)-2\lambda_3\lambda_4g^2(1-g)\\
&=zyx+2\lambda_3yx+\lambda_2\lambda_3x+\lambda_1\lambda_2z-2\lambda_1zy-2\lambda_1\lambda_3y+\lambda_5y(1-g^2)\\&\quad
+\lambda_6x(1-g^2)+\lambda_4z(1-g^2)+\lambda_2\lambda_5(1-g^2)-2\lambda_1\lambda_6(1-g^2)\\&\quad
-2\lambda_1\lambda_6g^2(1-g)-2\lambda_3\lambda_4g^2(1-g);
\end{align*}
\begin{align*}
x(yz)&=x(zy+\lambda_3y-\lambda_2z+\lambda_6(1-g^2))=(xz)y+\lambda_3xy-\lambda_2xz+\lambda_6x(1-g^2)\\
&=z(xy)+2\lambda_3xy-\lambda_1zy+\lambda_5(1-g^2)y-\lambda_2xz+\lambda_6x(1-g^2)\\
&=zyx-\lambda_2[x,z]-2\lambda_1zy+2\lambda_3xy+\lambda_4z(1-g^2)+\lambda_6x(1-g^2)\\&\quad+\lambda_5y(1-g^2)-2\lambda_2\lambda_5g^2(1-g)\\
&=zyx+2\lambda_3yx+\lambda_2\lambda_3x+\lambda_1\lambda_2z-2\lambda_1zy-2\lambda_1\lambda_3y+\lambda_5y(1-g^2)\\&\quad
+\lambda_6x(1-g^2)+\lambda_4z(1-g^2)+2\lambda_3\lambda_4(1-g^2)
-2\lambda_2\lambda_5g^2(1-g)- \lambda_2\lambda_5 (1-g^2).
\end{align*}
\end{proof}
\begin{rmk}
The Hopf subalgebras of $H$   in Lemma \ref{lem:p4-x1y1z1} generated by $g,x,y$  appeared in \cite{NW} as examples of pointed Hopf algebras over $\K$ of dimension $p^3$.
\end{rmk}
\begin{thm}\label{thm:p4-x1y0z0}
Let $p$ be a prime number and $\Char\K=p$. Let $H$ be a  pointed Hopf algebra over $\K$ of dimension $p^4$. Assume that $\gr H=\K[g,x,y,z]/(g^p-1,x^p,y^p,z^p)$ with $g\in\G(H)$, $x\in\Pp_{1,g}$ and $y,z\in\Pp(H)$. Then $H$ is isomorphic to one of the following Hopf algebras:
\begin{description}
  \item[(1)] $H_1(\lambda):=\K\langle g,x,y,z\rangle/(g^p-1,[g,x],[g,y],[g,z],[x,y]-\lambda x,[x,z],[y,z]-z,x^p,y^p-y,z^p)$,
  \item[(2)] $\K\langle g,x,y,z\rangle/(g^p-1,[g,x],[g,y],[g,z],[x,y]-x,[x,z]-(1-g),[y,z]-z,x^p,y^p-y,z^p)$,
  \item[(3)] $\K\langle g,x\rangle/(g^p-1,gx-xg-g(1-g),x^p-x)\otimes \K\langle y,z\rangle/(y^p-y,z^p,[y,z]-z)$,

  \item[(4)] $\K[ g,x]/(g^p-1, x^p)\otimes \K[y,z]/(y^p-y,z^p-z)$,
  \item[(5)]  $H_2(\lambda):=\K[ g,x,y,z]/(g^p-1, x^p-y-\lambda z,y^p-y,z^p-z)$,
  \item[(6)]  $\K\langle g,x,y,z\rangle/(g^p-1,[g,x],[g,y],[g,z],[x,y]-x,[x,z],[y,z],x^p,y^p-y,z^p-z)$,
  \item[(7)]  $\K\langle g,x,y,z\rangle/(g^p-1,[g,x],[g,y],[g,z],[x,y]-x,[x,z],[y,z],x^p- z,y^p-y,z^p-z)$,
  \item[(8)]  $H_3(\lambda,\gamma):=\K\langle g,x,y,z\rangle/(g^p-1,[g,x]-g(1-g),[g,y],[g,z],[x,y],[x,z],[y,z],x^p-x-\lambda y-\gamma z,y^p-y,z^p-z)$,
  \item[(9)] $\K[g,x]/(g^p-1,x^p)\otimes\K[y,z]/(y^p-y,z^p)$,
  \item[(10)] $\K[g,x,y,z]/(g^p-1,x^p-z,y^p-y,z^p)$,
  \item[(11)] $\K[g,x,y,z]/(g^p-1,x^p-y,y^p-y,z^p)$,
  \item[(12)] $\K[g,x,y,z]/(g^p-1,x^p-y-z,y^p-y,z^p)$,
  \item[(13)] $\K\langle g,x,y,z\rangle/(g^p-1,[g,x],[g,y],[g,z],[x,y],[x,z]-(1-g),[y,z],x^p, y^p-y,z^p)$,
  \item[(14)] $\K\langle g,x,y,z\rangle/(g^p-1,[g,x],[g,y],[g,z],[x,y],[x,z]-(1-g),[y,z],x^p- y, y^p-y,z^p)$,
  \item[(15)] $\K\langle g,x,y,z\rangle/(g^p-1,[g,x],[g,y],[g,z],[x,y]-x,[x,z],[y,z],x^p, y^p-y,z^p)$,
  \item[(16)] $\K\langle g,x,y,z\rangle/(g^p-1,[g,x],[g,y],[g,z],[x,y]-x,[x,z],[y,z],x^p-z, y^p-y,z^p)$,
  \item[(17)]  $H_4(\lambda,i):=\K\langle g,x,y,z\rangle/(g^p-1,[g,x]-g(1-g),[g,y],[g,z],[x,y],[x,z],[y,z],x^p-x-\lambda y-iz,y^p-y,z^p)$, for $i\in\I_{0,1}$,
  \item[(18)]  $H_5(\lambda):=\K\langle g,x,y,z\rangle/(g^p-1,[g,x]-g(1-g),[g,y],[g,z],[x,y],[x,z]-(1-g),[y,z],x^p-x-\lambda y, y^p-y,z^p)$,

  \item[(19)] $\K[g,x]/(g^p-1,x^p)\otimes\K[y,z]/(y^p-z,z^p)$,
  \item[(20)] $\K[g,x,y,z]/(g^p-1,x^p-z,y^p-z,z^p)$,
  \item[(21)] $\K[g,x,y,z]/(g^p-1,x^p-y,y^p-z,z^p)$,
  \item[(22)] $\K\langle g,x,y,z\rangle/(g^p-1,[g,x],[g,y],[g,z],[x,y]-(1-g),[y,z],[x,z],x^p,y^p-z,z^p)$,
  \item[(23)] $\K\langle g,x,y,z\rangle/(g^p-1,[g,x],[g,y],[g,z],[x,y]-(1-g),[y,z],[x,z],x^p- z,y^p-z,z^p)$,
  \item[(24)] $\K\langle g,x\rangle/(g^p-1,gx-xg-g(1-g),x^p-x)\otimes\K[y,z]/(y^p-z,z^p)$,
  \item[(25)] $\K\langle g,x,y,z\rangle/(g^p-1,gx-xg-g(1-g),[g,y],[g,z],[x,y],[x,z],[y,z],x^p-x-z,y^p-z,z^p)$,
  \item[(26)] $\K\langle g,x,y,z\rangle/(g^p-1,gx-xg-g(1-g),[g,y],[g,z],[x,y],[x,z],[y,z],x^p-x-y,y^p-z,z^p)$,
  \item[(27)]  $H_6(\lambda):=\K\langle g,x,y,z\rangle/(g^p-1,gx-xg-g(1-g),[g,y],[g,z],[x,y]-(1-g),[x,z],[y,z],x^p-x-\lambda z,y^p-z,z^p)$,

  \item[(28)] $\K[g,x]/(g^p-1,x^p)\otimes\K[y,z]/(y^p,z^p)$,
  \item[(29)] $\K\langle g,x\rangle/(g^p-1,gx-xg-g(1-g),x^p-x)\otimes\K[y,z]/(y^p,z^p)$,
  \item[(30)] $\K[g,x,y,z]/(g^p-1,x^p-y, y^p,z^p)$,
  \item[(31)] $\K\langle g,x,y,z\rangle/(g^p-1,gx-xg=g(1-g),[g,y],[g,z],[x,y],[x,z],[y,z],x^p-x-y,y^p,z^p)$,
  \item[(32)] $\K\langle g,x,y,z\rangle/(g^p-1,[g,x],[g,y],[g,z],[x,y]-(1-g),[x,z],[y,z],x^p,y^p,z^p)$,
  \item[(33)] $\K\langle g,x,y,z\rangle/(g^p-1,[g,x],[g,y],[g,z],[x,y]-(1-g),[x,z],[y,z],x^p-z,y^p,z^p)$,
  \item[(34)] $\K\langle g,x,y,z\rangle/(g^p-1,gx-xg=g(1-g),[g,y],[g,z],[x,y]-(1-g),[x,z],[y,z],x^p-x,y^p,z^p)$,
  \item[(35)] $\K\langle g,x,y,z\rangle/(g^p-1,gx-xg=g(1-g),[g,y],[g,z],[x,y]-(1-g),[x,z],[y,z],x^p-x-z,y^p,z^p)$,
\end{description}
Furthermore, for $\lambda,\gamma\in\K$,
\begin{itemize}
\item $H_1(\lambda)\cong H_1(\gamma)$, if and only if, $\lambda=\gamma$;
\item $H_2(\lambda)\cong H_2(\gamma)$, if and only if, there exist $\alpha_1,\alpha_2,\beta_1,\beta_2\in\K$ satisfying $\alpha_i^p-\alpha_i=0=\beta_i^p-\beta_i$ for $i\in\I_{1,2}$ such that $(\alpha_1+\beta_1\lambda)\gamma=(\alpha_2+\beta_2\lambda)$ and $\alpha_1\beta_2-\alpha_2\beta_1\neq 0$;
\item $H_3(\lambda,\gamma)\cong H_3(\mu,\nu)$, if and only if, there exist $\alpha_i,\beta_i\in\K$  satisfying $\alpha_i^p-\alpha_i=0=\beta_i^p-\beta_i$ for $i\in\I_{0,1}$ such that $\alpha_1\beta_2-\alpha_2\beta_1\neq 0$ and $\lambda\alpha_1+\gamma\beta_1=\mu$,  $\lambda\alpha_2+\gamma\beta_2=\nu$;
\item $H_4(\lambda,i)\cong H_4(\gamma,j)$, if and only if, there is $\alpha\neq 0\in\K$ satsifying $\alpha^p=\alpha$ such that $\lambda\alpha=\gamma$ and $i=j$;
\item $H_5(\lambda)\cong H_5(\gamma)$, if and only if, there is $\alpha\neq 0\in\K$ satsifying $\alpha^p=\alpha$ such that $\lambda\alpha=\gamma$;
\item $H_6(\lambda)= H_6(\gamma)$, if and only if, $\lambda=\gamma$.
\end{itemize}
\end{thm}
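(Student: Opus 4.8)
The plan is to run the lifting method exactly as in the proof of Lemma \ref{lem:p4-x1y1z1}, in three stages: first fix the general shape of the defining relations, then reduce to normal forms through a restricted-Lie-algebra analysis together with admissible changes of variables, and finally settle the isomorphism problem. First I would identify the coradical $H_0=\K[\langle g\rangle]\cong\K[C_p]$ and read off from $\gr H$ that the infinitesimal braiding is $V\cong M_{1,1}\oplus M_{0,1}\oplus M_{0,1}$ with trivial braiding, consistent with $\BN(V)=\K[x,y,z]/(x^p,y^p,z^p)$. As in Lemma \ref{lem:p4-x1y1z1}, I compute $\Delta$ on each commutator and each $p$-th power and use that the result is forced into a skew-primitive space. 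Since $[g,y]=[g,z]=0$ (because $\Pp_{g,g}(\K[C_p])=0$), one gets $[x,y],[x,z]\in\Pp_{1,g}(H)=\K\{x,1-g\}$ and $[y,z]\in\Pp(H)$, while Proposition \ref{proJ} and Lemma \ref{pqlem1} give $x^p-\lambda_1 x\in\Pp(H)$ and $y^p,z^p\in\Pp(H)$, where $[g,x]=\lambda_1 g(1-g)$ with $\lambda_1\in\I_{0,1}$ after rescaling $x$. A coradical-filtration argument pins down $\Pp(H)=\K\{y,z\}$, so the general relations depend on finitely many scalar parameters.

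The structural heart is the observation that $\mathfrak{g}:=\Pp(H)=\K\{y,z\}$, equipped with the bracket $[y,z]$ and the $p$-power map $w\mapsto w^p$, is a two-dimensional restricted Lie algebra. Over the algebraically closed field $\K$ these are classified: either $\mathfrak{g}$ is non-abelian, in which case the restricted axiom $\ad(w^{[p]})=(\ad w)^p$ forces $[y,z]=z$, $y^{[p]}=y$, $z^{[p]}=0$; or $\mathfrak{g}$ is abelian and the $p$-map is a $p$-semilinear operator whose normal form (using Artin--Schreier to trivialize the toral eigenvalues) is one of $(y^{[p]},z^{[p]})\in\{(y,z),(y,0),(z,0),(0,0)\}$. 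These five types produce the five blocks of the list. The remaining data, namely the value of $\lambda_1$, the operator $\ad x$ restricted to $\mathfrak{g}$ recorded by $[x,y],[x,z]\in\K\{x,1-g\}$, and the extension datum $x^p=\lambda_1 x+(\ast)y+(\ast)z$, is then treated case by case.

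In each case I reduce the surviving parameters using the admissible changes of variable (rescaling $x$; replacing $x$ by $x$ plus a combination of $y,z,1-g$; and $\mathbb{F}_p$-linear recombination of any toral generators), and I invoke Bergman's Diamond Lemma \cite{B} exactly as in Lemma \ref{lem:p4-x1y1z1}, both to extract the compatibility conditions on the surviving parameters (of the type $\lambda_2\lambda_5=\lambda_3\lambda_4+\lambda_1\lambda_6$ obtained there) and to certify $\dim H=p^4$. Running through the five blocks for $\mathfrak{g}$ against the possible $\lambda_1$ and $\ad x$-data yields the thirty-five families.

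Finally, for the isomorphism statement I would use that any Hopf isomorphism preserves $\G(H)=\langle g\rangle$, the skew-primitive space $\Pp_{1,g}(H)=\K\{x,1-g\}$ and $\Pp(H)=\mathfrak{g}$, hence is determined by its effect on $g,x,y,z$ and must respect both the bracket and the $p$-map on $\mathfrak{g}$; injectivity is checked via Proposition \ref{pro:R11-4.3.3}. The key arithmetic point is that any linear map fixing a toral relation such as $y^p=y$ must have coefficients in the prime field, i.e. scalars $\alpha$ with $\alpha^p=\alpha$; substituting an admissible change of variables into the defining relations and matching parameters then produces precisely the stated conditions, for instance the $\mathrm{PGL}_2(\mathbb{F}_p)$-type action behind $H_2$ and $H_3$, and the scaling conditions behind $H_4,H_5$, while $H_1,H_6$ admit no nontrivial parameter identification. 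The main obstacle is not any single computation but the completeness and bookkeeping of this case analysis: verifying that the admissible changes of variable have been fully exploited so that the thirty-five families are pairwise non-isomorphic except for the listed identifications, and that the conditions $\alpha^p=\alpha$ are necessary as well as sufficient.
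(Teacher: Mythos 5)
Your proposal follows essentially the same route as the paper: the same lifting computation pins down the shape of the relations, the same key structural step identifies $\Pp(H)=\K\{y,z\}$ as a two-dimensional restricted Lie algebra whose five-fold classification (the paper cites \cite[Proposition A.3]{W1} for the corresponding Hopf subalgebra $L=U^L(\Pp(H))$) organizes the case analysis, and the same Diamond Lemma and change-of-variable arguments reduce to the thirty-five families, with the isomorphism conditions extracted exactly as you describe. No substantive difference.
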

\begin{proof}
Similar to the proof of Lemma \ref{lem:p4-x1y1z1}, we have
\begin{gather*}
gx-xg=\lambda_1g(1-g),\quad gy-yg=0,\quad gz-zg=0,\\
x^p-\lambda_1x\in\Pp(H),\quad y^p\in\Pp(H),\quad z^p\in\Pp(H),\\
xy-yx\in\Pp_{1,g}(H),\quad xz-zx\in\Pp_{1,g}(H),\quad yz-zy\in\Pp(H).
\end{gather*}
for some $\lambda_1\in\I_{0,1}$.
Since $\Pp(H)=\K\{y,z\}$ and $\Pp_{1,g}(H)=\K\{x,1-g\}$, it follows that
\begin{gather*}
x^p-\lambda_1x=\mu_1y+\mu_2z,\quad y^p=\mu_3y+\mu_4z,\quad z^p=\mu_5y+\mu_6z,\\
xy-yx=\nu_1x+\nu_2(1-g),\quad xz-zx=\nu_3x+\nu_4(1-g),\quad yz-zy=\nu_5y+\nu_6z,
\end{gather*}
for some $\mu_1,\cdots,\mu_6,\nu_1,\cdots,\nu_6\in\K$.

It follows by  Lemmas \ref{pqlem1}--\ref{pqlem2} that
\begin{align*}
 [g,x^p]&=(g)(\AdR x)^p=[g,x],\quad g^px=xg^p,\\
 [x^p,y]&=(\AdL x)^p(y)=-\nu_2(\AdL x)^{p-1}(g)=\nu_2\lambda_1(1-g),  \\
 [x^p,z]&=(\AdL x)^p(z)=-\nu_4(\AdL x)^{p-1}(g)=\nu_4\lambda_1(1-g),\\
 [x,y^p]&=(x)(\AdR y)^p=\nu_1(x)(\AdR y)^{p-1}=\nu_1^{p-1}[x,y]=\nu_1^px+\nu_1^{p-1}\nu_2(1-g),\\
 [x,z^p]&=(x)(\AdR z)^p=\nu_3(x)(\AdR z)^{p-1}=\nu_3^{p-1}[x,z]=\nu_3^px+\nu_3^{p-1}\nu_4(1-g),\\
 [y^p,z]&=(\AdL y)^p(z)=\nu_6(\AdL y)^{p-1}(z)=\nu_6^{p-1}[y,z]=\nu_6^{p-1}\nu_5y+\nu_6^pz,\\
 [y,z^p]&=(y)(\AdR z)^p=\nu_5(y)(\AdR z)^{p-1}=\nu_5^{p-1}[y,z]=\nu_5^py+\nu_5^{p-1}\nu_6z.
\end{align*}
Then the verification of $(a^p)b=a^{p-1}(ab)$ for $a,b\in\{g,x,y,z\}$ and $(gx)y=g(xy), g(xz)=(gx)z$ amounts to the conditions
\begin{gather}
\lambda_1\nu_1=\mu_2\nu_5=\mu_2\nu_6=0,\quad \lambda_1\nu_3=\mu_1\nu_5=\mu_1\nu_6=0, \label{eq_q4-1}\\
\mu_3\nu_1+\mu_4\nu_3=\nu_1^p,~ \mu_3\nu_2+\mu_4\nu_4=\nu_1^{p-1}\nu_2,~\mu_5\nu_1+\mu_6\nu_3=\nu_3^p,~ \mu_5\nu_2+\mu_6\nu_4=\nu_3^{p-1}\nu_4,\label{eq_q4-2} \\
\mu_3\nu_5=\nu_6^{p-1}\nu_5,\quad \mu_3\nu_6=\nu_6^{p},\quad \mu_6\nu_5=\nu_5^{p},\quad \mu_6\nu_6=\nu_5^{p-1}\nu_6, \label{eq_q4-3}\\
\mu_1\nu_1+\mu_2\nu_3=0=\mu_1\nu_2+\mu_2\nu_4,\quad \mu_4\nu_5=\mu_4\nu_6=\mu_5\nu_5=\mu_5\nu_6=0.\label{eq_q4-4}
\end{gather}
Finally, the verification of $(xy)z=x(yz)$ amounts to the conditions
\begin{gather}
\nu_1\nu_5=\nu_3\nu_6=0,\quad \nu_2\nu_3+\nu_2\nu_5+\nu_4\nu_6=\nu_1\nu_4.\label{eq_q4-5}
\end{gather}
By the Diamond lemma, $\dim H=p^4$.

Let $L$ be the subalgebra of $H$ generated by $y,z$. It is clear that $L$ is a Hopf subalgebra of $H$.  Indeed, $L\cong U(\Pp(H))$, where $U(\Pp(H))$ is the restricted universal enveloping algebra of $\Pp(H)$. By \cite[Proposition A.3]{W1}, $L$ is isomorphic to one of the following Hopf algebras
\begin{enumerate}
  \item[(a)] $\K\langle y,z\rangle/(y^p-y,z^p,[y,z]-z)$,
  \item[(b)] $\K[y,z]/(y^p-y,z^p-z)$,
  \item[(c)] $\K[y,z]/(y^p-y,z^p)$,
  \item[(d)] $\K[y,z]/(y^p-z,z^p)$,
  \item[(e)] $\K[y,z]/(y^p,z^p)$.
\end{enumerate}

\textbf{Case (a).} Assume that $L$ is isomorphic to the Hopf algebra described in $(a)$, that is, there is an isomorphism of Hopf algebras $\phi$ from $L$ to the Hopf algebra described in $(a)$. Then  there are some $\beta_1,\beta_2,\gamma_1,\gamma_2\in\K$ such that
\begin{align*}
\phi(y)=\beta_1y^{\prime}+\beta_2z^{\prime},\quad \phi(z)=\gamma_1y^{\prime}+\gamma_2z^{\prime},\quad\beta_1\gamma_2-\beta_2\gamma_1\neq 0.
\end{align*}
Applying $\phi$ to the relation $yz-zy=\nu_5y+\nu_6z$ in $L$, we have
\begin{align*}
\left(\begin{array}{cc}
   \nu_5 & \nu_6
 \end{array}\right)
\left( \begin{array}{cc}
   \beta_1 & \beta_2\\
   \gamma_1& \gamma_2
 \end{array}\right)
 \neq
 \left( \begin{array}{cc}
   0 & 0
 \end{array}\right)
 \Rightarrow (\nu_5,\nu_6)\neq (0,0).
\end{align*}

Then from Equations \eqref{eq_q4-1} and \eqref{eq_q4-4}, we have $\mu_1=\mu_2=0=\mu_4=\mu_5$.

If $\nu_5\nu_6=0$, then by swapping $y$ and $z$, we may assume that $\nu_5= 0$ and $\nu_6\neq 0$. Then Equations \eqref{eq_q4-3} and \eqref{eq_q4-5} yield $\mu_3=\nu_6^{p-1}\neq 0$, $\mu_6=0=\nu_3$ and $\nu_4\nu_6=\nu_1\nu_4$. Therefore, we can take $\mu_3=1=\nu_6$ via the linear translation $y\mapsto\nu_6^{-1}y$.

If $\nu_5\nu_6\neq 0$, then from Equations \eqref{eq_q4-3} and \eqref{eq_q4-5}, we have $\mu_3=\nu_6^{p-1}\neq0$, $\mu_6=\nu_5^{p-1}\neq 0$, $\nu_1=\nu_3=0$ and $\nu_2\nu_5+\nu_4\nu_6=0$. Furthermore, Equations \eqref{eq_q4-2} yield $\nu_2=0=\nu_4$. Therefore, we can take $\mu_3=1=\nu_6$ and $\mu_6=0=\nu_5$ via the linear translation $y\mapsto \nu_6^{-1}y, z\mapsto \nu_6^{-1}y+\nu_5^{-1}z$.

From the above discussion, without loss of generality, we can take $\mu_3-1=0=\mu_4=\mu_5=\mu_6$ and $\nu_5=0=\nu_6-1$. Then from Equations \eqref{eq_q4-1}--\eqref{eq_q4-5},  $\mu_1=0=\mu_2$, $\lambda_1\nu_1=0=\nu_3$, $\nu_1^p=\nu_1$, $\nu_4=\nu_1\nu_4$, $\nu_2=\nu_1^{p-1}\nu_2$ and we can take $\nu_4\in\I_{0,1}$ by rescaling $z$.

If $\lambda_1=0=\nu_4$,   then we can take $\nu_2=0$. Indeed, if  $\nu_1=0$, then $\nu_2=0$, otherwise we can take $\nu_2=0$ via the linear translation $x\mapsto x+a(1-g)$ satisfying $\nu_1a=\nu_2$. Hence $H\cong H_1(\nu_1)$   described in $(1)$.
If $\lambda_1=0=\nu_4-1$, then $\nu_1=1$ and we can take $\nu_2=0$ via the linear translation $x\mapsto x+\nu_2(1-g)$,  which gives one class of $H$ described in $(2)$.

If $\lambda_1=1$, then $\nu_1=0=\nu_2=\nu_4$, which gives one class of $H$   described in $(3)$.

\textbf{Claim:} $H_1(\lambda)\cong H_1(\gamma)$ for $\lambda,\gamma\in\K$, if and only if, $\lambda=\gamma$.

Write $g^{\prime},x^{\prime},y^{\prime},z^{\prime}$ to distinguish the generators of $H_{1}(\gamma)$. Observe that $\Pp_{1, g^{\prime} }(H_1(\gamma))=\K\{x^{\prime}\}\oplus \K\{1- g^{\prime} \}$ and $\Pp(H_1(\gamma))=\K\{y^{\prime},z^{\prime}\}$. Suppose that $\phi: H_{1}(\lambda)\rightarrow H_{1}(\gamma)$ for $\lambda,\gamma\in\K$ is a Hopf algebra isomorphism.    Then by Proposition \ref{pro:R11-4.3.3}, $\phi(\Pp(H_{1}(\lambda)))=\Pp(H_{1}(\lambda))$, $\phi(\Pp_{1,g}(H_{1}(\lambda)))=\Pp_{1,g^{\prime}}(H_{1}(\lambda))$. Therefore,
\begin{align}\label{eq:iso-1-0-0}
 \phi(g)=g^{\prime}, \quad \phi(x)=\alpha_1x^{\prime}+\alpha_2(1-g^{\prime}),\quad\phi(y)=\beta_1y^{\prime}+\beta_2z^{\prime},\quad \phi(z)=\gamma_1y^{\prime}+\gamma_2z^{\prime},
 \end{align}
 for some $\alpha_i,\beta_i,\gamma_i\in\K$ and $i\in\I_{1,2}$. Applying $\phi$ to the relation $[y,z]-z=0$,   we have
\begin{align*}
\gamma_1=0,\quad (\beta_1-1)\gamma_2=0\quad \Rightarrow \quad \beta_1=1.
\end{align*}
Then applying $\phi$ to the relation $[x,y]-\lambda x=0$, we have
\begin{align*}
\lambda=\gamma.
\end{align*}
 Conversely, it is easy to see that   $ H_{1}(\lambda)\cong H_{1}(\gamma)$ if  $\lambda=\gamma$. Consequently, the claim follows.

 Similarly, we can also show that the Hopf algebras described in $(1)$--$(3)$ are pairwise isomorphic.  Indeed, direct computations show that there are no elements $\alpha_i,\beta_i,\gamma_i\in\K$ for $i\in\I_{1,2}$ such that the morphism \eqref{eq:iso-1-0-0} is an isomorphism.

\textbf{Case (b).} Assume that $L$ is isomorphic to the Hopf algebra described in $(b)$, that is, there is an isomorphism of Hopf algebras $\phi$ from $L$ to the Hopf algebra described in $(b)$. Then  there are some $\beta_1,\beta_2,\gamma_1,\gamma_2\in\K$ such that
\begin{align*}
\phi(y)=\beta_1y^{\prime}+\beta_2z^{\prime},\quad \phi(z)=\gamma_1y^{\prime}+\gamma_2z^{\prime},\quad\beta_1\gamma_2-\beta_2\gamma_1\neq 0.
\end{align*}
Applying $\phi$ to the relation $yz-zy=\nu_5y+\nu_6z$ in $L$, we have
\begin{align*}
\left(\begin{array}{cc}
   \nu_5 & \nu_6
 \end{array}\right)
\left( \begin{array}{cc}
   \beta_1 & \beta_2\\
   \gamma_1& \gamma_2
 \end{array}\right)
 = 0
 \Rightarrow \nu_5=0=\nu_6.
\end{align*}
Applying $\phi$ to the relations $y^p=\mu_3y+\mu_4z$ and $y^p=\mu_5y+\mu_6z$, we have
\begin{align*}
\left(\begin{array}{cc}
 \mu_3 & \mu_4 \\
   \mu_5 & \mu_6
\end{array}\right)
\left(\begin{array}{cc}
  \beta_1 & \beta_2\\
  \gamma_1& \gamma_2
\end{array}\right)
=
 \left( \begin{array}{cc}
   \beta_1^p & \beta_2^p\\
   \gamma_1^p& \gamma_2^p
 \end{array}\right).
\end{align*}
Since $\beta_1\gamma_2-\beta_2\gamma_1\neq 0$,  by Proposition \ref{proJ}, $\beta_1^p\gamma_2^p-\beta_2^p\gamma_1^p\neq 0$ and hence $\mu_3\mu_6-\mu_4\mu_5\neq0$.

Denoted by $H(\lambda_1,\nu_1,\cdots,\nu_4,\mu_1,\cdots,\mu_6)$ the Hopf algebra $H$ in this case for $\lambda_1\in\I_{0,1}$, $\mu_1,\cdots,\mu_6$, $\nu_1,\cdots,\nu_4\in\K$ satisfying the relations \eqref{eq_q4-1}--\eqref{eq_q4-5}. Then there is an isomorphism of Hopf algebras:
\begin{align*}
\psi: H(\lambda_1,\nu_1,\cdots,\nu_4,\mu_1,\cdots,\mu_6)\mapsto H(\lambda_1,\nu_1^{\prime},\cdots,\nu_4^{\prime},\mu_1^{\prime},\mu_2^{\prime},1,0,0,1),
\end{align*}
given by
\begin{align*}
\psi(g)=g^{\prime},\quad\psi(x)=x^{\prime},\quad \psi(y)=\beta_1y^{\prime}+\beta_2z^{\prime},\quad \psi(z)=\gamma_1y^{\prime}+\gamma_2z^{\prime},
\end{align*}
where
 \begin{align*}
 \left(\begin{array}{cc}
   \mu_1 & \mu_2
 \end{array}\right)
\left( \begin{array}{cc}
   \beta_1 & \beta_2\\
   \gamma_1& \gamma_2
 \end{array}\right)=
 \left(\begin{array}{cc}
   \mu_1^{\prime} & \mu_2^{\prime}
 \end{array}\right),\\
 \left(\begin{array}{cc}
 \beta_1 & \beta_2\\
  \gamma_1& \gamma_2
\end{array}\right)
\left(\begin{array}{cc}
  \nu_1^{\prime} & \nu_2^{\prime}\\
   \nu_3^{\prime}& \nu_4^{\prime}
\end{array}\right)
=
 \left( \begin{array}{cc}
   \nu_1 & \nu_2\\
   \nu_3& \nu_4
 \end{array}\right).
 \end{align*}

From the above discussion, without loss of generality, we can assume that $\mu_3=1$, $\mu_4=\mu_5=0$, $\mu_6=1$ and $\nu_5=0=\nu_6$. Then $\lambda_1\nu_1=0=\lambda_1\nu_3$,   $\nu_1^p=\nu_1$, $\nu_3^p=\nu_3$, $\nu_4=\nu_3^{p-1}\nu_4$,  $\nu_2=\nu_1^{p-1}\nu_2$, $\mu_1\nu_1+\mu_2\nu_3=0=\mu_1\nu_2+\mu_2\nu_4$ and $\nu_2\nu_3=\nu_1\nu_4$. Hence we can take $\nu_1,\nu_3\in\{0,1\}$ by rescaling $y,z$.

If $\lambda_1=0$ and $\nu_1=0=\nu_3$, then $\nu_2=0=\nu_4$ and we can take $\mu_1\in\I_{0,1}$ or $\mu_2\in\I_{0,1}$ by rescaling $x$. If $\mu_1=0=\mu_2$, then $H$ is isomorphic to the Hopf algebra described in $(4)$.
If $\mu_1=1$, then $H\cong H_2(\mu_2)$ described in $(5)$.
If $\mu_1=0$ and $\mu_2\neq 0$, then by rescaling $x$, we have $\mu_2=1$, and hence by swapping $x$ and $y$, $H\cong H_2(0)$.

If $\lambda_1=0$ and $\nu_1-1=0=\nu_3$, then $\nu_4=0=\mu_1$, and we can take $\nu_2=0$ via the linear translation $x\mapsto x+\nu_2(1-g)$. Moreover, we can take $\mu_2\in\I_{0,1}$ by rescaling $x$, which gives two classes of $H$ described in $(6)$--$(7)$.

If $\lambda_1=0$ and $\nu_1=0=\nu_3-1$, then it can be reduced to the case $\lambda_1=0$ and $\nu_1-1=0=\nu_3$ by swapping $x$ and $y$.

If $\lambda_1=0$ and $\nu_1=1=\nu_3$, then $\mu_1+\mu_2=0=\nu_2-\nu_4$ and hence we can take $\nu_2=0=\nu_4$ via the linear translations $x\mapsto x+\nu_2(1-g)$. Therefore, it can be reduced to the case $\lambda_1=0$ and $\nu_1-1=0=\nu_3$  via the linear translation $z\mapsto z-y$.

If $\lambda_1=1$, then $\nu_1=0=\nu_3$ and hence $\nu_2=0=\nu_4$. Therefore  $H\cong H_3(\mu_1,\mu_2)$  described in $(8)$.

Similar to the proof of Case $(a)$,   $H_2(\lambda)\cong H_2(\gamma)$ for $\lambda,\gamma\in\K$, if and only if, there exist $\alpha_1,\alpha_2,\beta_1,\beta_2\in\K$ satisfying $\alpha_i^p-\alpha_i=0=\beta_i^p-\beta_i$ for $i\in\I_{1,2}$ such that $(\alpha_1+\beta_1\lambda)\gamma=(\alpha_2+\beta_2\lambda)$ and $\alpha_1\beta_2-\alpha_2\beta_1\neq 0$.
$H_3(\lambda,\gamma)\cong H_3(\mu,\nu)$ if and only if, there exist $\alpha_i,\beta_i\in\K$ satisfying $\alpha_i^p-\alpha_i=0=\beta_i^p-\beta_i$ for $i\in\I_{0,1}$ such that $\alpha_1\beta_2-\alpha_2\beta_1\neq 0$ and $\lambda\alpha_1+\gamma\beta_1=\mu$,  $\lambda\alpha_2+\gamma\beta_2=\nu$. The Hopf algebras from the different items are pairwise non-isomorphic.

\textbf{Case (c).} Assume that $L$ is isomorphic to the Hopf algebra described in $(c)$. Then similar to the proof of Case (b), we have $\nu_5=0=\nu_6$. Denoted by $H(\lambda_1,\nu_1,\cdots,\nu_4,\mu_1,\cdots,\mu_6)$ the Hopf algebra $H$ in this case. Then there is an isomorphism of Hopf algebras:
\begin{align*}
\psi: H(\lambda_1,\nu_1,\cdots,\nu_4,\mu_1,\cdots,\mu_6)\mapsto H(\lambda_1,\nu_1^{\prime},\cdots,\nu_4^{\prime},\mu_1^{\prime},\mu_2^{\prime},1,0,0,0),
\end{align*}
given by
\begin{align*}
\psi(g)=g^{\prime},\quad\psi(x)=x^{\prime},\quad \psi(y)=\beta_1y^{\prime}+\beta_2z^{\prime},\quad \psi(z)=\gamma_1y^{\prime}+\gamma_2z^{\prime},
\end{align*}
where $\beta_1\gamma_2-\beta_2\gamma_1\neq0$,
 \begin{gather*}
 \left(\begin{array}{cc}
   \mu_1 & \mu_2
 \end{array}\right)
\left( \begin{array}{cc}
   \beta_1 & \beta_2\\
   \gamma_1& \gamma_2
 \end{array}\right)=
 \left(\begin{array}{cc}
   \mu_1^{\prime} & \mu_2^{\prime}
 \end{array}\right),\\
 \left(\begin{array}{cc}
 \beta_1 & \beta_2\\
  \gamma_1& \gamma_2
\end{array}\right)
\left(\begin{array}{cc}
  \nu_1^{\prime} & \nu_2^{\prime}\\
   \nu_3^{\prime}& \nu_4^{\prime}
\end{array}\right)
=
 \left( \begin{array}{cc}
   \nu_1 & \nu_2\\
   \nu_3& \nu_4
 \end{array}\right),\\
 \left(\begin{array}{cc}
 \mu_3 & \mu_4 \\
   \mu_5 & \mu_6
\end{array}\right)
\left(\begin{array}{cc}
  \beta_1 & \beta_2\\
  \gamma_1& \gamma_2
\end{array}\right)
=
 \left( \begin{array}{cc}
   \beta_1^p & \beta_2^p\\
   \gamma_1^p& \gamma_2^p
 \end{array}\right)
 \left(\begin{array}{cc}
 1 & 0 \\
   0 & 0
\end{array}\right).
 \end{gather*}

Therefore,  we can assume that $\mu_3-1=0=\mu_4=\mu_5=\mu_6=\nu_5=\nu_6$. Then $\lambda_1\nu_1=0=\nu_3$, $\nu_1=\nu_1^p$, $\nu_2=\nu_1^{p-1}\nu_2$, $\mu_1\nu_2+\mu_2\nu_4=0=\mu_1\nu_1=\nu_1\nu_4$ and we can take $\nu_1\in\I_{0,1}$ by rescaling $y$.

If $\lambda_1=0=\nu_1$, then $\nu_2=0=\mu_2\nu_4$ and we can take $\nu_4\in\I_{0,1}$ by rescaling $z$. If $\nu_4=0$, then we can take $\mu_1,\mu_2\in\I_{0,1}$ by rescaling $x,z$, which gives four classes of $H$ described in $(9)$--$(12)$.
If $\nu_4=1$, then $\mu_2=0$ and we can take $\mu_1\in\I_{0,1}$ by rescaling $x,z$. Indeed, if $\mu_1\neq 0$, then we can take $\mu_1=1$ via $x\mapsto ax,z\mapsto a^{-1}z$ satisfying $a^p=\mu_1$. Therefore $H$ is isomorphic to one of the Hopf algebras in $(13)$--$(14)$.

If $\lambda_1=0=\nu_1-1$, then $\mu_1=0=\nu_4$ and we can take $\nu_2=0$ via the linear translation $x\mapsto x+\nu_2(1-g)$. Hence we can take $\mu_2\in\I_{0,1}$ by rescaling $x$, which gives two classes of $H$ described in $(15)$--$(16)$.

If $\lambda_1=1$, then $\nu_1=0=\nu_2=\mu_2\nu_4$ and we can take $\nu_4\in\I_{0,1}$ by rescaling $z$. If $\nu_4=0$, then we can take $\mu_2\in\I_{0,1}$ by rescaling $z$ and hence $H\cong H_4(\mu_1,\mu_2)$   described in $(17)$.
If $\nu_4=1$, then $\mu_2=0$ and hence $H\cong H_5(\mu_1)$ described in $(18)$.

Similar to the proof of Case $(a)$,   $H_4(\lambda,i)\cong H_4(\gamma,j)$ if and only if there is $\alpha\neq 0\in\K$ satsifying $\alpha^p=\alpha$ such that $\lambda\alpha=\gamma$ and $i=j$. $H_5(\lambda)\cong H_5(\gamma)$ if and only if there is $\alpha\neq 0\in\K$ satsifying $\alpha^p=\alpha$ such that $\lambda\alpha=\gamma$. The Hopf algebras from different items are pairwise non-isomorphic.

\textbf{Case (d).} Assume that $L$ is isomorphic to the Hopf algebra described in $(d)$. Similar to the proof of Case (b), without loss of generality, we can assume that $\mu_3=0=\mu_4-1=\mu_5=\mu_6=\nu_5=\nu_6$. Then $\nu_1=\nu_3=\nu_4=\mu_1\nu_2=0$.

If $\lambda_1=0$, then $\nu_2\in\I_{0,1}$ by rescaling $x$. If $\nu_2=0$, then we can take $\mu_1\in\I_{0,1}$ by rescaling $x$. If $\mu_1=0$,  then we can take $\mu_2\in\I_{0,1}$. If $\mu_1=1$, then we can take $\mu_2=0$ via the linear translation $y\mapsto y+\mu_2z$. Therefore, we obtain three classes of $H$ described in $(19)$--$(21)$.
If $\nu_2=1$, then $\mu_1=0$ and we can take $\mu_2\in\I_{0,1}$, which gives two classes of $H$ described in $(22)$--$(23)$. Indeed, if $\mu_2\neq 0$, then we can take $\mu_2=1$ via $x\mapsto ax,y\mapsto a^{-1}y,z\mapsto a^{-p}z$ satisfying $a^{-2p}=\mu_2$.

If $\lambda_1-1=0=\nu_2$, then we can take $\mu_1\in\I_{0,1}$ by rescaling $y,z$. If $\mu_1=0$, then we can take $\mu_2\in\I_{0,1}$ by rescaling $y,z$. If $\mu_1=1$, then we can take $\mu_2=0$ via the linear translation $y\mapsto y+\mu_2z$. Therefore, we obtain three classes of $H$ described in $(24)$--$(26)$.

If $\lambda_1=1$ and $\nu_2\neq 0$, then $\mu_1=0$ and we can take $\nu_2=1$ by rescaling $y,z$. Therefore, $H\cong H_7(\mu_2)$ described in $(27)$.

Similar to the proof of Case $(a)$, $H_7(\lambda)= H_7(\gamma)$, if and only if, $\lambda=\gamma$. The Hopf algebras from different items are pairwise non-isomorphic.

\textbf{Case (e).} Assume that $L$ is isomorphic to the Hopf algebra described in $(e)$. Then there is an isomorphism of Hopf algebras $\phi$ from $L$ to the Hopf algebra described in $(e)$ given by
\begin{align*}
\phi(y)=\beta_1y^{\prime}+\beta_2z^{\prime},\quad \phi(z)=\gamma_1y^{\prime}+\gamma_2z^{\prime},\quad\beta_1\gamma_2-\beta_2\gamma_1\neq 0.
\end{align*}
Applying $\phi$ to the relation $yz-zy=\nu_5y+\nu_6z$ in $L$, we have
\begin{align*}
\left(\begin{array}{cc}
   \nu_5 & \nu_6
 \end{array}\right)
\left( \begin{array}{cc}
   \beta_1 & \beta_2\\
   \gamma_1& \gamma_2
 \end{array}\right)
 =0
 \Rightarrow \nu_5=0=\nu_6.
\end{align*}
From the relations $y^p=\mu_3y+\mu_4z$ and $z^p=\mu_5y+\mu_6z$, we have
\begin{align*}
\left(\begin{array}{cc}
   \mu_3 & \mu_4\\
   \mu_5 & \mu_6
 \end{array}\right)
\left( \begin{array}{cc}
    \beta_1 & \beta_2\\
   \gamma_1& \gamma_2
 \end{array}\right)
 =
\left( \begin{array}{cc}
    0 & 0\\
   0& 0
 \end{array}\right)
\end{align*}
Therefore, we have $\mu_3=\mu_4=\mu_5=\mu_6=\nu_5=\nu_6=0$. Then
$\nu_1=0=\nu_3$, $\mu_1\nu_2+\mu_2\nu_4=0$ and we can take $\nu_2,\nu_4\in\I_{0,1}$ by rescaling $y,z$.

If $\nu_2=0=\nu_4$ and $\mu_1=0=\mu_2$, then $H$ is isomorphic to one of the Hopf algebras described in $(28)$--$(29)$.

If $\nu_2=0=\nu_4$ and $\mu_1\neq 0$ or $\mu_2\neq 0$, then $H$ is isomorphic to one of the Hopf algebras described in $(30)$--$(31)$.
Indeed, if $\mu_1\neq 0$, then we can take $\mu_1=1$ and $\mu_2=0$ via the linear translation $y\mapsto \mu_1y+\mu_2z$, $z\mapsto z$; if $\mu_2\neq 0$, then we can take $\mu_1=1$ and $\mu_2=0$ via the linear translation $y\mapsto \mu_1y+\mu_2z$, $z\mapsto y$;

If $\nu_2-1=0=\nu_4$, then $\mu_1=0$ and $\mu_2\in\I_{0,1}$ by rescaling $z$, which gives four classes of $H$ described in $(32)$--$(35)$.

If $\nu_2=0=\nu_4-1$, then it can be reduced to the case $\nu_2-1=0=\nu_4$ by swapping $y$ and $z$.

If $\nu_2=1=\nu_4$, then $\mu_1+\mu_2=0$ and hence it can be reduced to the case $\nu_2-1=0=\nu_4$ via the linear translation $z\mapsto z-y$.

Similar to the proof of Case (a), the Hopf algebras from different items are pairwise non-isomorphic.
\end{proof}

\begin{rmk}
The Hopf subalgebras of $H$ in Theorem \ref{thm:p4-x1y0z0} generated by $g, x, y$ or by $g, y, z$
appeared in \cite{NW} as examples of pointed Hopf algebras over k of dimension $p^3$.

\end{rmk}

\begin{rmk}
In Theorem \ref{thm:p4-x1y0z0}, there are six infinite families of Hopf algebras of dimension $p^4$, which constitute new examples of Hopf algebras. Moreover, the Hopf algebras described in $(1)$--$(2)$, $(6)$--$(8)$, $(13)$--$(18)$, $(22)$--$(23)$, $(25)$--$(27)$, $(31)$--$(35)$ are not tensor product Hopf algebras and constitute new examples of non-commutative and non-cocommutative pointed Hopf algebras. In particular, up to isomorphism,  there are infinitely many Hopf algebras of dimension $p^4$ that are generated by group-like elements and skew-primitive elements.
\end{rmk}

\begin{lem}\label{lem:p4-x1y1z0}
Let $p$ be a prime number and $\Char\K=p$. Let $H$ be a  pointed Hopf algebra over $\K$ of dimension $p^4$. Assume that $\gr H=\K[g,x,y,z]/(g^p-1,x^p,y^p,z^p)$ with $g\in\G(H)$, $x,y\in\Pp_{1,g}(H)$ and $z\in\Pp(H)$. Then the defining relations of $H$ have the following form
\begin{gather*}
g^p=1,\quad gx-xg=\lambda_1g(1-g),\quad gy-yg=\lambda_2g(1-g),\quad gz-zg=0,\\
x^p-\lambda_1x=\lambda_3z,\quad y^p-\lambda_2y=\lambda_4z,\quad z^p=\lambda_5z,\\
xz-zx=\gamma_1x+\gamma_2y+\gamma_3(1-g),\quad yz-zy=\gamma_4x+\gamma_5y+\gamma_6(1-g),\\
xy-yx-\lambda_2x+\lambda_1y=\left\{
  \begin{array}{ll}
    \lambda_6z, & p=2, \\
    \lambda_7(1-g^2), & p>2.
  \end{array}
\right.
\end{gather*}
for some $\lambda_1,\cdots,\lambda_7, \gamma_1,\cdots,\gamma_6\in\K$.

Suppose that $p=2$. Then $dim H=16$ if and only if the parameters satisfy the following conditions:
\begin{gather}
\lambda_6\gamma_1=\lambda_3\gamma_4, \quad\lambda_6\gamma_2=\lambda_3\gamma_5,\quad \lambda_6\gamma_3=\lambda_3\gamma_6,\label{eq:x1y1z0-1}\\
\lambda_1\gamma_1=\lambda_2\gamma_2,\quad \lambda_6\gamma_2=0,\quad \lambda_1\gamma_4=\lambda_2\gamma_5,\quad \lambda_6\gamma_4=0,\label{eq:x1y1z0-2}\\
\lambda_6\gamma_4=\lambda_4\gamma_1,\quad \lambda_6\gamma_5=\lambda_4\gamma_2,\quad\lambda_6\gamma_6=\lambda_4\gamma_3,\label{eq:x1y1z0-3}\\
(\lambda_5-\gamma_1)\gamma_1+\gamma_2\gamma_4=(\lambda_5-\gamma_1)\gamma_2+\gamma_2\gamma_5=(\lambda_5-\gamma_1)\gamma_3+\gamma_2\gamma_6=0,\label{eq:x1y1z0-4}\\
(\lambda_5-\gamma_5)\gamma_4+\gamma_1\gamma_4=(\lambda_5-\gamma_5)\gamma_5+\gamma_2\gamma_4=(\lambda_5-\gamma_5)\gamma_6+\gamma_3\gamma_4=0,\label{eq:x1y1z0-5}\\
\lambda_3\gamma_1=\lambda_3\gamma_2=\lambda_3\gamma_3=0=\lambda_4\gamma_4=\lambda_4\gamma_5=\lambda_4\gamma_6,\label{eq:x1y1z0-6}\\
\lambda_6\gamma_1=\lambda_6\gamma_5.\label{eq:x1y1z0-7}
\end{gather}
\end{lem}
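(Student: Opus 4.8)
The plan is to mirror the method already used for Lemma~\ref{lem:p4-x1y1z1} and Theorem~\ref{thm:p4-x1y0z0}: first extract the defining relations by computing coproducts of the relevant brackets and $p$-th powers and locating them in the appropriate spaces of (skew-)primitive elements, and then run the Diamond Lemma \cite{B} both to guarantee $\dim H=p^4$ and to read off the compatibility conditions. Since the relation-finding step is essentially identical to Lemma~\ref{lem:p4-x1y1z1}, the real content lies in the Diamond-Lemma bookkeeping for $p=2$.

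I would begin with the relations involving $g$. Since $x,y\in\Pp_{1,g}(H)$, a routine coproduct computation gives $gx-xg,\,gy-yg\in\Pp_{g,g^2}(H)\cap H_1=\K\{g(1-g)\}$, hence $gx-xg=\lambda_1 g(1-g)$ and $gy-yg=\lambda_2 g(1-g)$; since $z\in\Pp(H)$, one finds $gz-zg\in\Pp_{g,g}(H)\cap H_1=0$, whence $gz=zg$. Using Jacobson's formula (Proposition~\ref{proJ}) together with Lemma~\ref{pqlem1} to expand $\Delta(x^p)$, one gets $x^p-\lambda_1 x\in\Pp(H)=\K\{z\}$, so $x^p-\lambda_1 x=\lambda_3 z$; analogously $y^p-\lambda_2 y=\lambda_4 z$ and $z^p=\lambda_5 z$. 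For the mixed brackets, $gz=zg$ yields $\Delta(xz-zx)=(xz-zx)\otimes 1+g\otimes(xz-zx)$, so $xz-zx\in\Pp_{1,g}(H)=\K\{x,y,1-g\}$, giving the $\gamma_1,\gamma_2,\gamma_3$ relation, and likewise for $yz-zy$. Finally, exactly as in Lemma~\ref{lem:p4-x1y1z1}, $\Delta(xy-yx)$ shows $xy-yx-\lambda_2 x+\lambda_1 y\in\Pp_{1,g^2}(H)$. Here the characteristic intervenes: for $p>2$ one has $g^2\neq 1$ and $\Pp_{1,g^2}(H)=\K\{1-g^2\}$, producing the term $\lambda_7(1-g^2)$; for $p=2$ one has $g^2=1$, so $\Pp_{1,g^2}(H)=\Pp(H)=\K\{z\}$, producing the term $\lambda_6 z$. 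This yields the displayed list of relations.

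For the compatibility conditions when $p=2$, the plan is to apply the Diamond Lemma with the order $z<y<x<g$, treating the relations above as reduction rules with expected normal-form basis $\{g^a x^b y^c z^d\}_{a,b,c,d\in\I_{0,1}}$. The $g$-overlaps (those among $g^2$, $gx$, $gy$, $gz$ and the triple overlaps $g(ab)=(ga)b$) resolve automatically, as in Lemma~\ref{lem:p4-x1y1z1}. The remaining overlaps then produce the listed identities one family at a time. The self-overlap $x(x^2)=(x^2)x$ forces $\lambda_3(xz-zx)=0$, i.e.\ $\lambda_3\gamma_1=\lambda_3\gamma_2=\lambda_3\gamma_3=0$, and symmetrically for $y$, giving \eqref{eq:x1y1z0-6}. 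Comparing $[x^2,y]$ computed from $x^2=\lambda_1 x+\lambda_3 z$ with $x[x,y]+[x,y]x$ gives \eqref{eq:x1y1z0-1}, while the symmetric overlap of $y^2$ with $x$ gives \eqref{eq:x1y1z0-3}. The overlaps $(x^2)z$ and $(y^2)z$, via $[x^2,z]=\lambda_1[x,z]$ matched against $x[x,z]+[x,z]x$, give \eqref{eq:x1y1z0-2}; the overlaps $x(z^2)$ and $y(z^2)$, using $z^2=\lambda_5 z$, give \eqref{eq:x1y1z0-4} and \eqref{eq:x1y1z0-5}; and the triple overlap $(xy)z=x(yz)$ gives \eqref{eq:x1y1z0-7}. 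Throughout, the higher brackets such as $[x^p,a]$, $[a,z^p]$, $[y^p,z]$ are handled via Lemmas~\ref{pqlem1}--\ref{pqlem2}, adapted to the present right-hand sides.

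The main obstacle is this second stage, and in particular the honest bookkeeping of the overlaps in characteristic $2$. Because the right-hand sides are richer than those of Lemmas~\ref{pqlem1}--\ref{pqlem2} (they mix $x$, $y$, $z$ and the six parameters $\gamma_i$), each bracket such as $[x^2,z]$ or $[x,z^2]$ must be expanded carefully, keeping track of the many terms that collapse under $2=0$ (for instance $xz+zx=xz-zx$ and the doubling of $x^2$-type terms to $0$), and then matched coefficient-by-coefficient against the basis monomials. The delicate point is to confirm that the union of all these conditions is exactly \eqref{eq:x1y1z0-1}--\eqref{eq:x1y1z0-7}, with the triple overlap $(xy)z=x(yz)$ being both the most laborious computation and the one most likely to conceal an omitted or redundant relation.
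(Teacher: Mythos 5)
Your proposal is correct and follows essentially the same route as the paper: the relations are extracted by locating $gx-xg$, $x^p-\lambda_1x$, $xz-zx$, etc.\ in the appropriate spaces of (skew-)primitives (with the $p=2$ versus $p>2$ dichotomy for $xy-yx$ handled exactly as you describe), and the conditions \eqref{eq:x1y1z0-1}--\eqref{eq:x1y1z0-7} are then read off from the same family of associativity overlaps, which the paper records as the six bracket identities $[x,[x,y]]-[x^2,y]=\lambda_6[x,z]-\lambda_3[z,y]$, etc., together with the triple overlap $(xy)z=x(yz)$ for \eqref{eq:x1y1z0-7}. Your assignment of each condition to its generating overlap (self-overlaps for \eqref{eq:x1y1z0-6}, the $(x^2)y$- and $(xy)y$-type overlaps for \eqref{eq:x1y1z0-1} and \eqref{eq:x1y1z0-3}, the $(x^2)z$- and $(x z^2)$-type overlaps for \eqref{eq:x1y1z0-2}, \eqref{eq:x1y1z0-4}, \eqref{eq:x1y1z0-5}) agrees with the paper's computation.
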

\begin{proof}

Similar to the proof of Lemma \ref{lem:p4-x1y1z1}, we have $gx-xg=\lambda_1g(1-g)$, $gy-yg=\lambda_2g(1-g)$ and $gz-zg=0$ in $H$ for some $\lambda_1,\lambda_2\in\I_{0,1}$. Moreover, $x^p-\lambda_1x,y^p-\lambda_2y,z^p\in\Pp(H)$, $xy-yx-\lambda_2x+\lambda_1y\in\Pp_{1,g^2}(H)$ and $xz-zx,yz-zy\in\Pp_{1,g}(H)$. Since $\Pp(H)=\K\{z\}$ and $\Pp_{1,g}(H)=\K\{1-g,x,y\}$, it follows that
\begin{gather*}
x^p-\lambda_1x=\lambda_3z,\quad y^p-\lambda_2y=\lambda_4z,\quad z^p=\lambda_5z,\\
xz-zx=\gamma_1x+\gamma_2y+\gamma_3(1-g),\quad yz-zy=\gamma_4x+\gamma_5y+\gamma_6(1-g),
\end{gather*}
for $\lambda_3,\lambda_4,\lambda_5,\gamma_1,\cdots,\gamma_6\in\K$.

If $g^2=1$, then $xy-yx-\lambda_2x+\lambda_1y\in\Pp(H)$ and hence $xy-yx-\lambda_2x+\lambda_1y=\lambda_6z$ for some $\lambda_6\in\K$; otherwise, $xy-yx-\lambda_2x+\lambda_1y=\lambda_7(1-g^2)$ for some $\lambda_7\in\K$.

Assume that $p=2$. Then it follows by a direct computation that
\begin{align*}
[x,[x,y]]-[x^2,y]=\lambda_6[x,z]-\lambda_3[z,y],\\
[x,[x,z]]-[x^2,z]=\gamma_2[x,y]+\gamma_3[g,x]-\lambda_1[x,z],\\
[[x,y],y]-[x,y^2]=\lambda_6[y,z]-\lambda_4[x,z],\\
[[x,z],z]-[x,z^2]=\gamma_1[x,z]+\gamma_2[y,z]-\lambda_5[x,z],\\
[y,[y,z]]-[y^2,z]=\gamma_4[x,y]+\gamma_6[g,y]-\lambda_2[y,z],\\
[[y,z],z]-[y,z^2]=\gamma_4[x,z]+\gamma_5[y,z]-\lambda_5[y,z].
\end{align*}
Then the verification of $(a^2)b=a(ab)$ and $a(b^2)=(ab)b$ for $a,b\in\{g,x,y,z\}$ amounts to the conditions \eqref{eq:x1y1z0-1}--\eqref{eq:x1y1z0-6}.
Then it follows by a direct computation that the ambiguities $(ab)c=a(bc)$ for $a,b,c\in\{g,x,y,z\}$ give the conditions \eqref{eq:x1y1z0-7}.
\end{proof}

\begin{rmk}
The Hopf subalgebras of $H$ in Lemma \ref{lem:p4-x1y1z0} generated by $g, x, y$ or by $g, x, z$
appeared in \cite{NW} as examples of pointed Hopf algebras over k of dimension $p^3$.
\end{rmk}

\begin{lem}\label{lem:p4-x1yu}
Let $p$ be a prime number and $\Char\K=p$. Let $H$ be a  pointed Hopf algebra over $\K$.   Assume that $\gr H=\K[g,h,x,y]/(g^p-1,h^{p^n}-1,x^p,y^p)$ with $g,h\in\G(H)$, $x\in\Pp_{1,g}(H)$ and $y\in\Pp_{1,g^{\mu}}(H)$ for $\mu\in\I_{0,p-1}$. If $\mu=0$, then  the defining relations of $H$ are
\begin{gather*}
g^p=1,\quad h^{p^n}=1,\quad gx-xg=\lambda_1g(1-g), \quad gy-yg=0,\\
 hx-xh=\lambda_3h(1-g),\quad hy-yh=0,\\
x^p-\lambda_1x=\mu_1y,\quad y^p=\mu_2y,\quad xy-yx=\mu_3x+\mu_4(1-g),
\end{gather*}
for $\lambda_1\in\I_{0,1},\lambda_3,\mu_1,\cdots,\mu_4\in\K$ with  conditions
\begin{align*}
\mu_1\mu_3=0=\mu_1\mu_4,\quad \mu_2\mu_3=\mu_3^p,\quad \mu_2\mu_4=\mu_3^{p-1}\mu_4,\quad \lambda_1\mu_3=0=\mu_3\lambda_3.
\end{align*}
If $\mu\neq 0$, then the defining relations are
\begin{gather*}
g^p=1,\quad h^{p^n}=1,\quad  gx-xg=\lambda_1g(1-g), \quad gy-yg=\lambda_2g(1-g^{\mu}),\\
hx-xh=\lambda_3h(1-g),\quad hy-yh=\lambda_4h(1-g^{\mu}),\\
x^p-\lambda_1x=0,\quad y^p- \lambda_2y=0,\quad xy-yx+\mu\lambda_1y-\lambda_2x=\lambda_5(1-g^{\mu+1}).
\end{gather*}
for $\lambda_1,\lambda_2\in\I_{0,1},\lambda_3,\cdots,\lambda_5\in\K$ satisfying the conditions
\begin{align*}
\lambda_1\lambda_4(1-g^{\mu+1})=0=\lambda_2\lambda_3(1-g^{\mu+1}).
\end{align*}
\end{lem}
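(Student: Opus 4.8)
The plan is to follow the template of Lemma \ref{lem:p4-x1y1z1}: read off the group-likes and skew-primitives of $H$ from $\gr H$, pin down each defining relation by a coproduct computation, and then invoke the Diamond Lemma \cite{B} to extract the ambiguity conditions. Since $H_0=(\gr H)_0=\K[\G(H)]$ with $\G(H)=\G(\gr H)=\langle g\rangle\times\langle h\rangle\cong C_p\times C_{p^n}$ abelian, we have $g^p=1$, $h^{p^n}=1$ and $gh=hg$; moreover $\K[\G(H)]$ has no nontrivial primitives, so $\Pp(H)\cap H_0=0$ and $\Pp_{1,f}(H)\cap H_0=\K\{1-f\}$ for $f\in\G(H)$. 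Reading off the degree-one part gives $\Pp_{1,g}(H)=\K\{1-g,x\}$; when $\mu=0$ one has $\Pp(H)=\K\{y\}$, and when $\mu\neq 0$ one has $\Pp_{1,g^\mu}(H)=\K\{1-g^\mu,y\}$ while $\Pp(H)=0$. I would use the monomial order $y<x<h<g$ throughout.

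First I would derive the relations. Whenever two generators $a,b$ commute in $\gr H$, the commutator $[a,b]$ drops one step in the coradical filtration, and a direct coproduct computation identifies it as skew-primitive of a determined type; intersecting with the spaces above fixes it up to scalars. Thus $gx-xg\in\Pp_{g,g^2}(H)\cap H_0=g\K\{1-g\}$ yields $gx-xg=\lambda_1 g(1-g)$ and likewise $hx-xh=\lambda_3 h(1-g)$; the two cases diverge for $y$, giving $gy-yg=0=hy-yh$ when $\mu=0$ and $gy-yg=\lambda_2 g(1-g^\mu)$, $hy-yh=\lambda_4 h(1-g^\mu)$ when $\mu\neq 0$. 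For the $p$-th powers, Proposition \ref{proJ} together with $g^p=1$ gives $\Delta(x^p)=x^p\otimes 1+1\otimes x^p+\lambda_1(g-1)\otimes x$, so $x^p-\lambda_1 x\in\Pp(H)$; hence $x^p=\lambda_1 x+\mu_1 y$ when $\mu=0$ and $x^p=\lambda_1 x$ when $\mu\neq 0$, with the analogous statement for $y^p$. Finally $xy-yx$ becomes skew-primitive after the correct lower-order correction: for $\mu=0$ it lands in $\Pp_{1,g}(H)$, giving $xy-yx=\mu_3 x+\mu_4(1-g)$, while for $\mu\neq 0$ the coproduct of $xy-yx+\mu\lambda_1 y-\lambda_2 x$ shows it lies in $\Pp_{1,g^{\mu+1}}(H)\cap H_1=\K\{1-g^{\mu+1}\}$, giving the stated relation. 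Rescaling $x$ (and $y$ when $\mu\neq 0$) normalizes $\lambda_1$ (and $\lambda_2$) into $\I_{0,1}$.

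Next I would run the Diamond Lemma. The purely group-like overlaps resolve trivially since $\G(H)$ is abelian, and the identities $h^i x=xh^i+i\lambda_3 h^i(1-g)$, $h^i y=yh^i+i\lambda_4 h^i(1-g^\mu)$, proved by induction from the $h$-relations using $gh=hg$, make the $h^{p^n}$-power overlaps vanish automatically. For the overlaps internal to $\langle g,x,y\rangle$: when $\mu\neq 0$ these relations are exactly those of Lemma \ref{pqlem2} with $k=1$, which computes $(\AdL x)^p(y)$ and $(x)(\AdR y)^p$ and resolves the $x,y$-power overlaps with no further condition (indeed $(-\mu)^{p-1}=1$ by Fermat); when $\mu=0$ I would instead compute $(x)(\AdR y)^k=\mu_3^k x+\mu_3^{k-1}\mu_4(1-g)$ directly from $gy=yg$, and substitute $x^p=\lambda_1 x+\mu_1 y$, $y^p=\mu_2 y$ into the overlaps $x^p y=x^{p-1}(xy)$ and $xy^p=(xy)y^{p-1}$; comparing normal forms then forces $\lambda_1\mu_3=\mu_1\mu_3=\mu_1\mu_4=0$ together with $\mu_2\mu_3=\mu_3^p$ and $\mu_2\mu_4=\mu_3^{p-1}\mu_4$.

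The hard part is the mixed triple overlaps of $h$ with $x$ and $y$: the relations $hx-xh=\lambda_3 h(1-g)$ and $hy-yh=\lambda_4 h(1-g^\mu)$ have their defect built from $g$ rather than $h$, so they fall outside the scope of Lemmas \ref{pqlem1} and \ref{pqlem2} and must be handled by hand. I would expand $(hx)y$ against $h(xy)$ (and $(hy)x$ against $h(yx)$) into normal form. When $\mu=0$, using $(1-g)y=y(1-g)$, one gets $h(xy)-(hx)y=\mu_3\lambda_3\,h(1-g)$, whence $\mu_3\lambda_3=0$. When $\mu\neq 0$, one must first push the defects through via $gy-yg=\lambda_2(g-g^{\mu+1})$ and $g^\mu x-xg^\mu=\lambda_1\mu(g^\mu-g^{\mu+1})$, after which the surviving cross-terms collapse to $\lambda_2\lambda_3(1-g^{\mu+1})$ and $\lambda_1\lambda_4(1-g^{\mu+1})$, whose vanishing is exactly the stated condition. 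Once all ambiguities are resolvable, the Diamond Lemma \cite{B} delivers the monomial basis $\{y^a x^b h^c g^d\}$ and $\dim H=p^{n+3}=\dim\gr H$, completing the proof.
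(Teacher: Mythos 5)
Your proposal is correct and follows essentially the same route as the paper: identify the skew-primitive spaces of $H$ from $\gr H$, pin down each relation by a coproduct computation (splitting into $\mu=0$ and $\mu\neq 0$), and resolve the Diamond Lemma overlaps using Proposition \ref{proJ} and Lemmas \ref{pqlem1}--\ref{pqlem2} for the $p$-th power ambiguities and a direct expansion of $(hx)y=h(xy)$ for the mixed ones, arriving at the same ambiguity conditions. The only cosmetic difference is the bookkeeping of which condition comes from which overlap (e.g.\ the paper extracts $\mu_1\mu_3=0=\mu_1\mu_4$ from $(x^p)x=x(x^p)$ rather than from $x^py=x^{p-1}(xy)$), which does not affect the result.
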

\begin{proof}
By similar computations as before, we have
\begin{gather*}
gx-xg=\lambda_1g(1-g), \quad gy-yg=\lambda_2g(1-g^{\mu}),\\
hx-xh=\lambda_3h(1-g),\quad hy-yh=\lambda_4h(1-g^{\mu}),\\
x^p-\lambda_1x\in\Pp(H),\quad y^p-\mu^{p-1}\lambda_2y\in\Pp(H),\quad xy-yx+\mu\lambda_1y-\lambda_2x\in\Pp_{1,g^{\mu+1}}(H).
\end{gather*}
for some $\lambda_1,\lambda_2\in\I_{0,1}$, $\lambda_3,\lambda_4\in\K$.

If $\mu=0$, then $\Pp(H)=\K\{y\}$ and $\Pp_{1,g}(H)=\K\{1-g,x\}$. Hence
\begin{align*}
x^p-\lambda_1x=\mu_1y,\quad y^p=\mu_2y,\quad xy-yx=\mu_3x+\mu_4(1-g).
\end{align*}
for some $\mu_1,\cdots,\mu_4\in\K$. The verification of $(x^p)x=x(x^p)$ and $(y^p)y=y(y^p)$ amounts to the conditions
\begin{align*}
\mu_1\mu_3=0=\mu_1\mu_4.
\end{align*}
By induction, for any $n>1$, we have $(x)(\AdR y)^n=\mu_3(x)(\AdR y)^{n-1}$ and $(\AdL x)^n(y)=(-\mu_4)(\AdL x)^{n-1}(g)$. Then by Lemma \ref{pqlem1},
\begin{align*}
[x,y^p]&=\mu_2[x,y]=\mu_2\mu_3x+\mu_2\mu_4(1-g),\\
(x)(\AdR y)^p&=\mu_3(x)(\AdR y)^{p-1}=\mu_3^{p-1}[x,y]=\mu_3^{p}x+\mu_3^{p-1}\mu_4(1-g);\\
[x^p,y]&=\lambda_1[x,y]=\lambda_1\mu_3x+\lambda_1\mu_4(1-g),\\
(\AdL x)^p(y)&=-\mu_4(\AdL x)^{p-1}(g)=-\mu_4\lambda_1^{p-1}(g-1)=\mu_4\lambda_1^{p-1}(1-g).
\end{align*}
Hence by Proposition \ref{proJ}, $[x,y^p]=(x)(\AdR y)^p$ and $[x^p,y]=(\AdL x)^p(y)$, which implies that
\begin{align*}
\mu_2\mu_3=\mu_3^p,\quad \mu_2\mu_4=\mu_3^{p-1}\mu_4,\quad \lambda_1\mu_3=0.
\end{align*}
Finally, it follows by a direct computation that $a(xy)=(ax)y$ and $(gh)b=g(hb)$ for $a\in\{g,h\}, b\in\{x,y\}$ amounts to the conditions
\begin{align*}
\mu_3\lambda_3=0=\mu_3\lambda_1.
\end{align*}

If $\mu\neq 0$, then $\Pp(H)=0$ and $\Pp_{1,g^{\mu+1}}(H)=\K\{1-g^{\mu+1}\}$. By Fermat's little theorem, $\mu^{p-1}=1$. Hence
\begin{align*}
x^p-\lambda_1x=0,\quad y^p- \lambda_2y=0,\quad xy-yx+\mu\lambda_1y-\lambda_2x=\lambda_5(1-g^{\mu+1}).
\end{align*}
The verification of $(hx)y=h(xy)$ amounts to the conditions
\begin{align*}
\lambda_1\lambda_4(1-g^{\mu+1})=0=\lambda_2\lambda_3(1-g^{\mu+1}).
\end{align*}
Then using Lemmas \ref{pqlem1} and \ref{pqlem2}, it follows by a direct computation that the ambiguities $a^{p-1}(ab)=(a^p)b$, $(ab)b^{p-1}=a(b^p)$ for $a,b\in\{g,x,y\}$  and $g(xy)=(gx)y$ are resolvable. By the Diamond lemma, $\dim H=p^{3+n}$.
\end{proof}

\begin{rmk}
The Hopf subalgebras of $H$ in Lemma \ref{lem:p4-x1yu} generated by $g, h, y$ appeared
in \cite{NW} as examples of pointed Hopf algebras over $\K$ of dimension $p^3$.
\end{rmk}

\begin{lem}\label{lem:p4-xg1yhu}
Let $p$ be a prime number and $\Char\K=p$. Let $H$ be a  pointed Hopf algebra over $\K$ of dimension $p^4$. Assume that $\gr H=\K[g,h,x,y]/(g^p-1,h^p-1,x^p,y^p)$ with $g,h\in\G(H)$, $x\in\Pp_{1,g}(H)$ and $y\in\Pp_{1,h^{\mu}}(H)$ for $\mu\in\I_{1,p-1}$.
 Then the defining relations of $H$ have the following form
\begin{gather*}
gx-xg=\lambda_1g(1-g),\quad hx-xh=\lambda_2h(1-g), \quad x^p-\lambda_1x=0\\
gy-yg=\lambda_3g(1-h^{\mu}), \quad hy-yh=\lambda_4h(1-h^{\mu}),\quad y^p- \lambda_4y=0,\\
 xy-yx-\lambda_3x+\mu\lambda_2y=\lambda_4(1-gh^{\mu}).
\end{gather*}
for some $\lambda_1,\lambda_4\in\I_{0,1}$, $\lambda_2,\lambda_3,\lambda_4\in\K$.
\end{lem}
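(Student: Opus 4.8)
The plan is to follow the same strategy as in Lemmas \ref{lem:p4-x1y1z1} and \ref{lem:p4-x1yu}: read off each defining relation by computing the comultiplication of the relevant commutator or $p$-th power and then locating the result inside a space of skew-primitive elements. The starting point is that $\gr H$ is commutative, so $gh=hg$ holds in $H_0$ and, for any two generators, the associated-graded class of their commutator vanishes; hence every commutator of a group-like with a skew-primitive drops one step in the coradical filtration, i.e. lands in $H_0$. I will use repeatedly that for group-likes $a,b$ one has $\Pp_{a,b}(H)\cap H_0=\K(a-b)$, and that $\Pp(H)=0$ while $\Pp_{1,gh^\mu}(H)=\K(1-gh^\mu)$. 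The latter two facts hold because, for $\dim H=p^4$, the subgroup $\langle g,h\rangle$ must be isomorphic to $C_p\times C_p$ (otherwise $\dim\gr H<p^4$), so that $1,g,h^\mu,gh^\mu$ are pairwise distinct and the only degree-one skew-primitives of $\gr H$ sit in degrees $g$ and $h^\mu$.

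First I would treat the four commutators of $g,h$ with $x,y$. Since $\Delta(x)=x\otimes1+g\otimes x$ and $\Delta(y)=y\otimes1+h^\mu\otimes y$, direct computations give $\Delta(gx-xg)=(gx-xg)\otimes g+g^2\otimes(gx-xg)$ and the analogous identities for the other three, placing $gx-xg\in\Pp_{g,g^2}(H)$, $hx-xh\in\Pp_{h,gh}(H)$, $gy-yg\in\Pp_{g,gh^\mu}(H)$ and $hy-yh\in\Pp_{h,h^{\mu+1}}(H)$. Intersecting each with $H_0$ yields
\begin{gather*}
gx-xg=\lambda_1g(1-g),\quad hx-xh=\lambda_2h(1-g),\\
gy-yg=\lambda_3g(1-h^\mu),\quad hy-yh=\lambda_4h(1-h^\mu),
\end{gather*}
and rescaling $x$ and $y$ lets me normalize $\lambda_1,\lambda_4\in\I_{0,1}$.

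Next I would derive the $p$-th power relations. Applying Proposition \ref{proJ} together with Lemma \ref{pqlem1} to $\Delta(x^p)=(x\otimes1+g\otimes x)^p$ shows, exactly as in Lemma \ref{lem:p4-x1y1z1}, that $x^p-\lambda_1x\in\Pp(H)=0$, hence $x^p-\lambda_1x=0$; here the normalization $\lambda_1\in\I_{0,1}$ is precisely what forces the coefficient to be $\lambda_1$ rather than $\lambda_1^{p-1}$. For $y$ the same computation first requires the identity $h^\mu y-yh^\mu=\mu\lambda_4h^\mu(1-h^\mu)$, obtained by induction from $hy-yh=\lambda_4h(1-h^\mu)$; feeding it into $\Delta(y^p)=(y\otimes1+h^\mu\otimes y)^p$ via Lemmas \ref{pqlem1}--\ref{pqlem2} produces $y^p-\mu^{p-1}\lambda_4y\in\Pp(H)$, and Fermat's little theorem ($\mu^{p-1}=1$) together with $\Pp(H)=0$ gives $y^p-\lambda_4y=0$.

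Finally, for the cross relation I would expand
\begin{align*}
\Delta(xy-yx)=(xy-yx)\otimes1+gh^\mu\otimes(xy-yx)+(xh^\mu-h^\mu x)\otimes y+(gy-yg)\otimes x,
\end{align*}
substitute $xh^\mu-h^\mu x=-\mu\lambda_2(h^\mu-gh^\mu)$ and $gy-yg=\lambda_3(g-gh^\mu)$, and then verify that the corrected element $w:=xy-yx-\lambda_3x+\mu\lambda_2y$ satisfies $\Delta(w)=w\otimes1+gh^\mu\otimes w$, i.e. $w\in\Pp_{1,gh^\mu}(H)=\K(1-gh^\mu)$. This gives $xy-yx-\lambda_3x+\mu\lambda_2y=\lambda_4(1-gh^\mu)$ for a scalar in $\K$, completing the list. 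I expect the main bookkeeping obstacle to be this last coproduct computation — tracking which $\otimes y$ and $\otimes x$ terms recombine into $gh^\mu\otimes w$ — and, secondarily, the inductive evaluation of $h^\mu y-yh^\mu$ and the Fermat step needed for the $y^p$ relation.
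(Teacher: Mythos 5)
Your proposal is correct and follows essentially the same route as the paper's own proof: the four commutator relations and the two $p$-th power relations are obtained by the standard coradical-filtration argument (with Fermat's little theorem handling the $\mu^{p-1}$ coefficient), and the cross relation comes from the identical coproduct computation using $h^{\mu}x=xh^{\mu}+\mu\lambda_2h^{\mu}(1-g)$ to show $xy-yx-\lambda_3x+\mu\lambda_2y\in\Pp_{1,gh^{\mu}}(H)=\K(1-gh^{\mu})$. No gaps.
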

\begin{proof}

Observe that $\mu\neq 0$, then $\Pp(H)=0$ and $\Pp_{1,gh^{\mu}}(H)=\K\{1-gh^{\mu}\}$. By Fermat's little theorem $\mu^{p-1}=1$. By similar computations as before, we have
\begin{align*}
gx-xg&=\lambda_1g(1-g),\quad hx-xh=\lambda_2h(1-g), \quad x^p-\lambda_1x=0,\\
gy-yg&=\lambda_3g(1-h^{\mu}), \quad hy-yh=\lambda_4h(1-h^{\mu}),\quad y^p- \lambda_4y=0.
\end{align*}
for some $\lambda_1,\lambda_4\in\I_{0,1}$, $\lambda_2,\lambda_3\in\K$.  Now we determine $\Delta(xy-yx)$. Observe that $h^{\mu}x=xh^{\mu}+\lambda_2\mu h^{\mu}(1-g)$. Then
\begin{align*}
\Delta(xy-yx)&=(x\otimes 1+g\otimes x)(y\otimes 1+h^{\mu}\otimes y)-(y\otimes 1+h^{\mu}\otimes y)(x\otimes 1+g\otimes x)\\
&=(xy-yx)\otimes 1+(gy-yg)\otimes x-(h^{\mu}x-xh^{\mu})\otimes y+gh^{\mu}\otimes (xy-yx)\\
&=(xy-yx)\otimes 1+\lambda_3g(1-h^{\mu})\otimes x-\lambda_2\mu h^{\mu}(1-g)\otimes y+gh^{\mu}\otimes(xy-yx).
\end{align*}
One can check that $xy-yx-\lambda_3x+\mu\lambda_2y\in\Pp_{1,gh^{\mu}}(H)$, which implies that
\begin{align*}
  xy-yx-\lambda_3x+\mu\lambda_2y=\lambda_5(1-gh^{\mu}).
\end{align*}
for some $\lambda_5\in\K$.
\end{proof}

\begin{rmk}
The Hopf subalgebras of $H$ in Lemma \ref{lem:p4-xg1yhu} generated by $g, h, y$ appeared
in \cite{NW} as examples of pointed Hopf algebras over $\K$ of dimension $p^3$.
\end{rmk}
\section{Non-connected pointed Hopf algebras of dimension $16$ whose diagrams are Nichols algebras}\label{sec:16}
In this section, we assume that $\Char\K=2$ and give a complete classification of non-connected pointed Hopf algebras of dimension $16$ over $\K$ whose diagrams are Nichols algebras by means of the Lifting Method. The strategy can be divided into the following steps:
\begin{enumerate}
  \item Determine all possible coradicals $H_0$ such that $\dim H_0\mid 16$;
  \item Given $H_0$ as in the previous item, determine all possible Nichols algebras $\BN(V)$ in ${}_{H_0}^{H_0}\mathcal{YD}$ such that $\dim\BN(V)\sharp H_0=16$;
  \item Given $H_0$ and $\BN(V)$ as in previous items, determine isomorphism classes of $\BN(V)\sharp H_0$;
  \item Given $\BN(V)\sharp H_0$, compute all possible deformations $H$ of $\BN(V)\sharp H_0$ such
that $\dim H=\dim\BN(V)\sharp H_0$ and finally determine their isomorphism classes.
\end{enumerate}

\begin{lem}\label{lem:16-group-like}
Let $H$ be a pointed non-connected Hopf algebra over $\K$ of dimension $16$. Then $\G(H)$ is isomorphic to the Dihedral group $D_4$, the quaternions group $Q_8$, $\Z_8$, $\Z_4\times \Z_2$, $\Z_2\times \Z_2\times \Z_2$, $\Z_4$, $\Z_2\times \Z_2$ or $\Z_2$.
\end{lem}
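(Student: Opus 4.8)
The plan is to first pin down the order of $G:=\G(H)$ from the structure of $\gr H$, and then to enumerate all isomorphism types of groups of the resulting orders. Since $H$ is pointed, its coradical is the group algebra $H_0=\K[G]$, which is automatically a Hopf subalgebra; hence the structure result recalled in the introduction applies and $\gr H\cong R\sharp H_0$ for the diagram $R=\oplus_{n\geq 0}R(n)$ in ${}^{H_0}_{H_0}\mathcal{YD}$ (see \cite{R85}). Counting dimensions then yields the key identity
\begin{align*}
16=\dim H=\dim\gr H=\dim R\cdot\dim H_0=\dim R\cdot|G|,
\end{align*}
so that $|G|$ divides $16$ and $\dim R=16/|G|$.

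Next I would extract the two constraints $|G|\geq 2$ and $|G|\leq 8$. Since $H$ is non-connected we have $H_0\neq\K$, whence $|G|\geq 2$. For the upper bound, note that $R$ is a connected $\N$-graded coalgebra with $R(0)=\K$: if $R=\K$, then $\gr H=H_0$, forcing $H=H_0=\K[G]$ to be a group algebra, contrary to the standing convention that ``pointed'' means ``nontrivial pointed''. Hence $R\neq\K$, so $\dim R\geq 2$ and therefore $|G|=16/\dim R\leq 8$. Combining this with divisibility by $16$ leaves $|G|\in\{2,4,8\}$.

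Finally I would invoke the elementary classification of groups of small order: the only group of order $2$ is $C_2$; the groups of order $4$ are $C_4$ and $C_2\times C_2$; and the groups of order $8$ are the three abelian ones $C_8$, $C_4\times C_2$, $C_2\times C_2\times C_2$ together with the two non-abelian ones $D_4$ and $Q_8$. These are exactly the eight groups listed in the statement, which completes the argument.

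I do not expect a genuine obstacle here: the whole argument is a bookkeeping consequence of the bosonization decomposition $\gr H\cong R\sharp H_0$ together with the enumeration of groups of order at most $8$. The one point that deserves care is the exclusion of $|G|=16$, which rests entirely on reading ``non-connected pointed'' as ``nontrivial pointed'' (that is, $H$ is not merely a group algebra), equivalently $\dim R\geq 2$; this is precisely what prevents $\G(H)$ from being one of the fourteen groups of order $16$.
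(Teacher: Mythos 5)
Your proposal is correct, and its overall skeleton matches the paper's one-line proof: show $|\G(H)|$ divides $16$, rule out $1$ and $16$ using ``non-connected'' and the standing convention that ``pointed'' means ``nontrivial pointed'', and then enumerate the groups of order $2$, $4$ and $8$. The only substantive difference is how you get the divisibility: the paper simply invokes the Nichols--Zoeller freeness theorem for the Hopf subalgebra $\K[\G(H)]\subseteq H$, whereas you pass to $\gr H\cong R\sharp H_0$ and read off $\dim H=\dim R\cdot|\G(H)|$ from the biproduct. Both are legitimate; your route has the small advantage of simultaneously producing the identity $\dim R=16/|\G(H)|$, which is exactly the bookkeeping used at the start of each case in Section 4, while the paper's citation of Nichols--Zoeller is shorter and does not require knowing that the coradical is a Hopf subalgebra plus Radford's projection theorem. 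Your treatment of the two boundary cases ($R=\K$ forces $H=H_0=\K[\G(H)]$, hence trivial; connectedness forces $\G(H)=\{1\}$) is also sound, and the list of groups of order at most $8$ is complete.
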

\begin{proof}
By Nichols Zoeller theorem, $|\G(H)|$ must divide $16$. By the assumption, $|\G(H)|\in\{8,4,2\}$ and hence the lemma follows.
\end{proof}

Recall that $D_4:=\langle g,h\mid g^4=1,h^2=1,hg=g^3h\rangle$, $Q_8:=\langle g,h\mid g^4=1,hg=g^3h,g^2=h^2\rangle$. Now we give a complete classification of non-connected pointed Hopf algebras of dimension $16$ whose diagrams are Nichols algebras.
\begin{thm}\label{thm:16-diagram-Nichols-algebra}
Let $H$ be a non-trivial non-connected pointed  Hopf algebra over $\K$ of dimension $16$ whose diagram is a Nichols algebra. Then $H$ is isomorphic to one of the following Hopf algebras
\begin{description}
  \item[(1)] $\K[D_4]\otimes\K[x]/(x^2)$, with $x\in\Pp(H)$;
  \item[(2)] $\K[D_4]\otimes\K[x]/(x^2-x)$, with $x\in\Pp(H)$;

  \item[(3)] $\K\langle g,h,x\rangle/(g^4-1,h^2-1,hg-g^3h,[g,x],[h,x],x^2)$, with $g,~h\in\G(H)$ and $x\in\Pp_{1,g^2}(H)$;
  \item[(4)] $\K\langle g,h,x\rangle/(g^4-1,h^2-1,hg-g^3h,[g,x],[h,x]-h(1-g^2),x^2)$, with $g,~h\in\G(H)$ and $x\in\Pp_{1,g^2}(H)$;
  \item[(5)] $\HH_1(\lambda):=\K\langle g,h,x\rangle/(g^4-1,h^2-1,hg-g^3h,[g,x]-g(1-g^2),[h,x]-\lambda h(1-g^2),x^2)$, for $\lambda\in\K$,
with $g,~h\in\G(H)$ and $x\in\Pp_{1,g^2}(H)$; moreover,
\begin{itemize}
\item $\HH_1(\lambda)\cong\HH_1(\gamma)$ for $\lambda,\gamma\in\K$, if and only if, $\lambda=\gamma+i$ for some $i\in\I_{0,1}$;
\end{itemize}

  \item[(6)] $\K[Q_8]\otimes\K[x]/(x^2)$, with $x\in\Pp(H)$;
  \item[(7)] $\K[Q_8]\otimes\K[x]/(x^2-x)$, with $x\in\Pp(H)$;

  \item[(8)] $\K\langle g,h,x\rangle/(g^4-1,hg-g^3h,g^2-h^2,[g,x],[h,x],x^2)$, with $g,~h\in\G(H)$,  $x\in\Pp_{1,g^2}(H)$;
  \item[(9)] $\HH_2(\lambda):=\K\langle g,h,x\rangle/(g^4-1,hg-g^3h,g^2-h^2,[g,x]-g(1-g^2),[h,x]-\lambda h(1-g^2),x^2)$, for $\lambda \in\K$,
with $g,~h\in\G(H)$, $x\in\Pp_{1,g^2}(H)$; moreover,
\begin{itemize}
\item $\HH_2(\lambda)\cong\HH_2(\gamma)$ for $\lambda,\gamma\in\K $, if and only if, $\lambda=\gamma+i$ or $(\lambda-j)(\gamma-i)=1$ for some  $i,j\in\I_{0,1}$;
\end{itemize}

  \item[(10)] $\K[\Z_8]\otimes\K[x]/(x^2)$, with $x\in\Pp(H)$;
  \item[(11)] $\K[\Z_8]\otimes\K[x]/(x^2-x)$, with $x\in\Pp(H)$;

  \item[(12)] $\K[g,x]/(g^8-1,x^2)$, with $g\in\G(H)$, $x\in\Pp_{1,g^{\mu}}(H)$ for $\mu\in\{1,2,4\}$;
  \item[(13)] $\K\langle g,x\rangle/(g^8-1,[g,x]-g(1-g^{\mu}),x^2-\mu x)$,
  with $g\in\G(H)$, $x\in\Pp_{1,g^{\mu}}(H)$ for $\mu\in\{1,4\}$;

  \item[(14)] $\K[\Z_4\times \Z_2]\otimes\K[x]/(x^2)$, with $x\in\Pp(H)$;
  \item[(15)] $\K[\Z_4\times \Z_2]\otimes\K[x]/(x^2-x)$, with $x\in\Pp(H)$;

  \item[(16)] $\K[g,h,x]/(g^4-1,h^2-1, x^2)$, with $g,h\in\G(H)$, $x\in\Pp_{1,g^{\mu}}(H)$ for $\mu\in\{1,2\}$;
  \item[(17)] $\K\langle g,h,x\rangle/(g^4-1,h^2-1,[g,h], [g,x], [h,x]-h(1-g^{\mu}),x^2)$, with $g,h\in\G(H)$, $x\in\Pp_{1,g^{\mu}}(H)$ for $\mu\in\{1,2\}$;
  \item[(18)] $\HH_{3,\mu}(\lambda):=\K\langle g,h,x\rangle/(g^4-1,h^2-1,[g,h], [g,x]-g(1-g^{\mu}), [h,x]-\lambda h(1-g^{\mu}),x^2-\mu x)$ for   $\lambda\in\K$,
  with $g,h\in\G(H)$, $x\in\Pp_{1,g^{\mu}}(H)$ for $\mu\in\{1,2\}$;

  \item[(19)] $\K[g,h,x]/(g^4-1,h^2-1,x^2)$, with $g,h\in\G(H)$ and $x\in\Pp_{1,h}(H)$;
  \item[(20)] $\K\langle g,h,x\rangle/(g^4-1,h^2-1,[g,h], [g,x]- g(1-h), [h,x],x^2)$, with $g,h\in\G(H)$ and $x\in\Pp_{1,h}(H)$;
  \item[(21)] $\HH_{4}(\lambda):=\K\langle g,h,x\rangle/(g^4-1,h^2-1, [g,h], [g,x]-\lambda g(1-h), [h,x]-h(1-h),x^2-x)$ for $\lambda\in\K$,
with $g,h\in\G(H)$ and $x\in\Pp_{1,h}(H)$; moreover,
\begin{itemize}
\item $\HH_{3,1}(\lambda)\cong\HH_{3,1}(\gamma)$, if and only if, $\lambda=\gamma$;
\item $\HH_{3,2}(\lambda)\cong\HH_{3,2}(\gamma)$, if and only if, $\lambda=\gamma$ or $\lambda\gamma=\lambda+\gamma$;
\item $\HH_4(\lambda)\cong\HH_4(\gamma)$, if and only if, $\lambda=\gamma+i$ for $i\in\I_{0,1}$;

\end{itemize}

  \item[(22)] $\K[\Z_2\times \Z_2\times \Z_2]\otimes \K[x]/(x^2)$, with $x\in\Pp(H)$;
  \item[(23)] $\K[\Z_2\times \Z_2\times \Z_2]\otimes \K[x]/(x^2-x)$, with $x\in\Pp(H)$;

  \item[(24)] $\K[g,h,k,x]/(g^2-1,h^2-1,k^2-1,x^2)$, with $g,h,k\in\G(H)$, $x\in\Pp_{1,g}(H)$;
  \item[(25)] $\HH_5(\lambda):=\K\langle g,h,k,x\rangle/(g^2-1,h^2-1,k^2-1, [g,h],[g,k],[h,k], [g,x], [h,x]-h(1-g), [k,x]-\lambda k(1-g),x^2)$ for $\lambda\in\K$, with $g,h,k\in\G(H)$, $x\in\Pp_{1,g}(H)$;
  \item[(26)] $\HH_6(\lambda,\gamma):=\K\langle g,h,k,x\rangle/(g^2-1,h^2-1,k^2-1, [g,h], [g,k], [h,k], [g,x]-g(1-g), [h,x]-\lambda h(1-g), [k,x]-\gamma k(1-g),x^2-x)$ for $\lambda,\gamma\in\K$,
with $g,h,k\in\G(H)$, $x\in\Pp_{1,g}(H)$; moreover,
\begin{itemize}
\item $\HH_5(\lambda)\cong\HH_5(\gamma)$, if and only if,
\begin{align*}
\lambda\gamma=\lambda+\gamma,\quad\text{or }(1+\lambda)\gamma=1, \quad\text{or } \lambda=\gamma+i,\quad \text{or } 1+i\gamma=\lambda\gamma,\quad i\in\I_{0,1};
\end{align*}
\item $\HH_6(\lambda_1,\lambda_2)\cong\HH_6(\gamma_1,\gamma_2)$, if and only if, there exist $q,r,\nu,\iota\in\I_{0,1}$ such that
\begin{align*}
q\iota+r\nu=1,\quad q\gamma_1+r\gamma_2 =\lambda_1,\quad \nu\gamma_1+\iota\gamma_2=\lambda_2;
\end{align*}
\end{itemize}

  \item[(27)] $\K[\Z_4]\otimes\K[x,y]/(x^2,y^2)$, with $x,y\in\Pp(H)$;
  \item[(28)] $\K[\Z_4]\otimes\K[x,y]/(x^2-x,y^2)$, with $x,y\in\Pp(H)$;
  \item[(29)] $\K[\Z_4]\otimes\K[x,y]/(x^2-y,y^2)$, with $x,y\in\Pp(H)$;
  \item[(30)] $\K[\Z_4]\otimes\K[x,y]/(x^2-x,y^2-y)$, with $x,y\in\Pp(H)$;
  \item[(31)] $\K[\Z_4]\otimes\K\langle x,y\rangle/([x,y]-y,x^2-x,y^2)$, with $x,y\in\Pp(H)$;

  \item[(32)] $\K[g,x,y]/(g^4-1,x^2,y^2)$, with $g\in\G(H)$, $x\in\Pp(H)$ and $y\in\Pp_{1,g}(H)$;
  \item[(33)] $\K\langle g,x,y\rangle/(g^4-1,[g,x],[g,y],x^2,y^2,[x,y]-(1-g))$, with $g\in\G(H)$, $x\in\Pp(H)$ and $y\in\Pp_{1,g}(H)$;
  \item[(34)] $\K[g,x,y]/(g^4-1,x^2-x,y^2)$, with $g\in\G(H)$, $x\in\Pp(H)$ and $y\in\Pp_{1,g}(H)$;

  \item[(35)] $\K\langle g,x,y\rangle/(g^4-1,[g,x],[g,y],x^2-x,y^2,[x,y]-y)$, with $g\in\G(H)$, $x\in\Pp(H)$ and $y\in\Pp_{1,g}(H)$;
  \item[(36)] $\K\langle g,y\rangle/(g^4-1,[g,y]-g(1-g),y^2-y)\otimes\K[x]/(x^2)$, with $g\in\G(H)$, $x\in\Pp(H)$ and $y\in\Pp_{1,g}(H)$;
  \item[(37)] $\K\langle g,y\rangle/(g^4-1,[g,y]-g(1-g),y^2-y)\otimes\K[x]/(x^2-x)$, with $g\in\G(H)$, $x\in\Pp(H)$ and $y\in\Pp_{1,g}(H)$;

  \item[(38)] $\K[g,y]/(g^4-1,y^2)\otimes\K[x]/(x^2)$, with $g\in\G(H)$, $x\in\Pp(H)$ and $y\in\Pp_{1,g^2}(H)$;
  \item[(39)] $\K\langle g,y\rangle/(g^4-1,[g,y]-g(1-g^2),y^2)\otimes\K[x]/(x^2)$, with $g\in\G(H)$, $x\in\Pp(H)$ and $y\in\Pp_{1,g^2}(H)$;
  \item[(40)] $\K[g,x,y]/(g^4-1, x^2,y^2-x)$, with $g\in\G(H)$, $x\in\Pp(H)$ and $y\in\Pp_{1,g^2}(H)$;
  \item[(41)] $\K\langle g,x,y\rangle/(g^4-1,[g,x],[g,y]-g(1-g^2),x^2,y^2-x,[x,y])$, with $g\in\G(H)$, $x\in\Pp(H)$ and $y\in\Pp_{1,g^2}(H)$;
  \item[(42)] $\K\langle g,x,y\rangle/(g^4-1,[g,x],[g,y],x^2,y^2,[x,y]-(1-g^2))$, with $g\in\G(H)$, $x\in\Pp(H)$ and $y\in\Pp_{1,g^2}(H)$;
  \item[(43)] $\K\langle g,x,y\rangle/(g^4-1,[g,x],[g,y]-g(1-g^2),x^2,y^2,[x,y]-(1-g^2))$, with $g\in\G(H)$, $x\in\Pp(H)$ and $y\in\Pp_{1,g^2}(H)$;
  \item[(44)] $\K[g,y]/(g^4-1,y^2)\otimes\K[x]/(x^2-x)$, with $g\in\G(H)$, $x\in\Pp(H)$ and $y\in\Pp_{1,g^2}(H)$;
  \item[(45)] $\K[g,x,y]/(g^4-1,x^2-x,y^2-x)$, with $g\in\G(H)$, $x\in\Pp(H)$ and $y\in\Pp_{1,g^2}(H)$;
  \item[(46)] $\HH_{7}(\lambda):=\K\langle g,x,y\rangle/(g^4-1,[g,x],[g,y]-g(1-g^2),x^2-x,y^2-\lambda x,[x,y])$, with $g\in\G(H)$, $x\in\Pp(H)$ and $y\in\Pp_{1,g^2}(H)$;
  \item[(47)] $\K\langle g,x,y\rangle/(g^4-1,[g,x],[g,y],x^2-x,y^2,[x,y]-y)$, with $g\in\G(H)$, $x\in\Pp(H)$ and $y\in\Pp_{1,g^2}(H)$;
  \item[(48)] $\K\langle g,x,y\rangle/(g^4-1,[g,x],[g,y]-g(1-g^2),x^2-x,y^2,[x,y]-y)$,
with $g\in\G(H)$, $x\in\Pp(H)$ and $y\in\Pp_{1,g^2}(H)$;
\begin{itemize}
\item $\HH_7(\lambda)\cong\HH_7(\gamma)$, if and only if, $\lambda=\gamma$;
\end{itemize}

  \item[(49)] $\K[g,x,y]/(g^4-1,x^2,y^2)$, with $g\in\G(H)$, $x,y\in\Pp_{1,g}(H)$;
  \item[(50)]  $\K\langle g,x,y\rangle/(g^4-1,[g,x],[g,y],x^2,y^2,[x,y]-(1-g^2))$, with $g\in\G(H)$, $x,y\in\Pp_{1,g}(H)$;
  \item[(51)]  $\K\langle g,x,y\rangle/(g^4-1,[g,x]-g(1-g),[g,y],x^2-x,y^2,[x,y]+y)$, with $g\in\G(H)$, $x,y\in\Pp_{1,g}(H)$;
  \item[(52)]  $\K\langle g,x,y\rangle/(g^4-1,[g,x]-g(1-g),[g,y],x^2-x,y^2,[x,y]+y-(1-g^2))$, with $g\in\G(H)$, $x,y\in\Pp_{1,g}(H)$;

  \item[(53)]  $\K[g,x,y]/(g^4-1,x^2,y^2)$,  with $g\in\G(H)$, $x\in\Pp_{1,g}(H)$ and $y\in\Pp_{1,g^2}(H)$;
  \item[(54)] $\K[g,x,y]/(g^4-1,x^2-y,y^2)$,  with $g\in\G(H)$, $x\in\Pp_{1,g}(H)$ and $y\in\Pp_{1,g^2}(H)$;
  \item[(55)] $\K\langle g,x,y\rangle/(g^4-1,[g,x],[g,y],x^2,y^2,[x,y]-(1-g^3))$,  with $g\in\G(H)$, $x\in\Pp_{1,g}(H)$ and $y\in\Pp_{1,g^2}(H)$;
  \item[(56)] $\K\langle g,x,y\rangle/(g^4-1,[g,x]-g(1-g),[g,y],x^2-x,y^2,[x,y])$,  with $g\in\G(H)$, $x\in\Pp_{1,g}(H)$ and $y\in\Pp_{1,g^2}(H)$;
 \item[(57)] $\K\langle g,x,y\rangle/(g^4-1,[g,x]-g(1-g),[g,y],x^2-x-y,y^2,[x,y])$,  with $g\in\G(H)$, $x\in\Pp_{1,g}(H)$ and $y\in\Pp_{1,g^2}(H)$;
  \item[(58)] $\K\langle g,x,y\rangle/(g^4-1,[g,x]-g(1-g),[g,y],x^2-x,y^2,[x,y]-(1-g^3))$,
  with $g\in\G(H)$, $x\in\Pp_{1,g}(H)$ and $y\in\Pp_{1,g^2}(H)$;

  \item[(59)] $\K[g,x,y]/(g^4-1,x^2,y^2)$, with $g\in\G(H)$, $x\in\Pp_{1,g}(H)$ and $y\in\Pp_{1,g^3}(H)$;
  \item[(60)] $\K\langle g,x,y\rangle/(g^4-1,[g,x]-g(1-g),[g,y],x^2-x,y^2,[x,y]+y)$, with $g\in\G(H)$, $x\in\Pp_{1,g}(H)$ and $y\in\Pp_{1,g^3}(H)$;
  \item[(61)] $\K\langle g,x,y\rangle/(g^4-1,[g,x]-g(1-g),[g,y]-g(1-g^3),x^2-x,y^2-y,[x,y]+y-x)$,
  with $g\in\G(H)$, $x\in\Pp_{1,g}(H)$ and $y\in\Pp_{1,g^3}(H)$;

  \item[(62)] $\K[g,x,y]/(g^4-1,x^2,y^2)$,  with $g\in\G(H)$, $x,y\in\Pp_{1,g^2}(H)$;
  \item[(63)] $\K\langle g,x,y\rangle/(g^4-1,[g,x]-g(1-g^2),[g,y],x^2,y^2,[x,y])$,
  with $g\in\G(H)$, $x,y\in\Pp_{1,g^2}(H)$;

   \item[(64)] $\K\langle g,x,y\rangle/(g^4-1,[g,x],gy-(y+x)g,[x,y],x^2,y^2)$, with $g\in\G(H)$, $x,y\in\Pp(H)$;
  \item[(65)] $\K\langle g,x,y\rangle/(g^4-1,[g,x],gy-(y+x)g,[x,y],x^2-x,y^2)$, with $g\in\G(H)$, $x,y\in\Pp(H)$;
  \item[(66)] $\K\langle g,x,y\rangle/(g^4-1,[g,x],gy-(y+x)g,[x,y],x^2-y,y^2)$, with $g\in\G(H)$, $x,y\in\Pp(H)$;
  \item[(67)] $\K\langle g,x,y\rangle/(g^4-1,[g,x],gy-(y+x)g,[x,y],x^2-x,y^2-y)$, with $g\in\G(H)$, $x,y\in\Pp(H)$;
  \item[(68)] $\K\langle g,x,y\rangle/(g^4-1,[g,x],gy-(y+x)g,[x,y]-y,x^2-x,y^2)$
  with $g\in\G(H)$, $x,y\in\Pp(H)$;

  \item[(69)] $\K\langle g,x,y\rangle/(g^4-1,[g,x],gy-(y+x)g,[x,y],x^2,y^2)$, with $g\in\G(H)$, $x,y\in\Pp_{1,g^2}(H)$;
  \item[(70)] $\K\langle g,x,y\rangle/(g^4-1,[g,x]-g(1-g^2),gy-(y+x)g,[x,y],x^2,y^2)$,
  with $g\in\G(H)$, $x,y\in\Pp_{1,g^2}(H)$;

  \item[(71)] $\K[\Z_2\times \Z_2]\otimes\K[x,y]/(x^2,y^2)$,  with $x,y\in\Pp(H)$;
  \item[(72)] $\K[\Z_2\times \Z_2]\otimes\K[x,y]/(x^2-x,y^2)$,  with $x,y\in\Pp(H)$;
  \item[(73)] $\K[\Z_2\times \Z_2]\otimes\K[x,y]/(x^2-y,y^2)$,  with $x,y\in\Pp(H)$;
  \item[(74)] $\K[\Z_2\times \Z_2]\otimes\K[x,y]/(x^2-x,y^2-y)$,  with $x,y\in\Pp(H)$;
  \item[(75)] $\K[\Z_2\times \Z_2]\otimes\K\langle x,y\rangle/([x,y]-y,x^2-x,y^2)$,
  with $x,y\in\Pp(H)$;

  \item[(76)] $\K[g,h,x,y]/(g^2-1,h^2-1,x^2,y^2)$, with $g,h\in\G(H)$,  $x\in\Pp_{1,g}(H)$ and $y\in\Pp(H)$;
  \item[(77)] $\K[g,h,x,y]/(g^2-1,h^2-1,x^2-y,y^2)$, with $g,h\in\G(H)$,  $x\in\Pp_{1,g}(H)$ and $y\in\Pp(H)$;
  \item[(78)] $\K\langle g,h,x,y\rangle/(g^2-1,h^2-1,[g,h],[g,x],[h,x]-h(1-g),[g,y],[h,y],x^2,y^2,[x,y])$, with $g,h\in\G(H)$,  $x\in\Pp_{1,g}(H)$ and $y\in\Pp(H)$;
  \item[(79)] $\K\langle g,h,x,y\rangle/(g^2-1,h^2-1,[g,h],[g,x],[h,x]-h(1-g),[g,y],[h,y],x^2-y,y^2,[x,y])$,
  \item[(80)] $\K\langle g,h,x,y\rangle/(g^2-1,h^2-1,[g,h],[g,x],[h,x],[g,y],[h,y],x^2,y^2,[x,y]-(1-g))$, with $g,h\in\G(H)$,  $x\in\Pp_{1,g}(H)$ and $y\in\Pp(H)$;
  \item[(81)] $\K\langle g,h,x,y\rangle/(g^2-1,h^2-1,[g,h],[g,x],[h,x]-h(1-g),[g,y],[h,y],x^2,y^2,[x,y]-(1-g))$, with $g,h\in\G(H)$,  $x\in\Pp_{1,g}(H)$ and $y\in\Pp(H)$;
  \item[(82)] $\K[g,h,x,y]/(g^2-1,h^2-1,x^2,y^2-y)$, with $g,h\in\G(H)$,  $x\in\Pp_{1,g}(H)$ and $y\in\Pp(H)$;
  \item[(83)] $\K\langle g,h,x,y\rangle/(g^2-1,h^2-1,[g,h],[g,x],[h,x]-h(1-g),[g,y],[h,y],x^2,y^2-y,[x,y])$, with $g,h\in\G(H)$,  $x\in\Pp_{1,g}(H)$ and $y\in\Pp(H)$;
  \item[(84)] $\K\langle g,h,x,y\rangle/(g^2-1,h^2-1,[g,h],[g,x],[h,x],[g,y],[h,y],x^2-y,y^2-y,[x,y])$, with $g,h\in\G(H)$,  $x\in\Pp_{1,g}(H)$ and $y\in\Pp(H)$;
  \item[(85)] $\K\langle g,h,x,y\rangle/(g^2-1,h^2-1,[g,h],[g,x],[h,x]-h(1-g),[g,y],[h,y],x^2-y,y^2-y,[x,y])$, with $g,h\in\G(H)$,  $x\in\Pp_{1,g}(H)$ and $y\in\Pp(H)$;
  \item[(86)] $\K\langle g,h,x,y\rangle/(g^2-1,h^2-1,[g,h],[g,x],[h,x],[g,y],[h,y],x^2,y^2-y,[x,y]-x)$, with $g,h\in\G(H)$,  $x\in\Pp_{1,g}(H)$ and $y\in\Pp(H)$;
  \item[(87)] $\K\langle g,h,x,y\rangle/(g^2-1,h^2-1,[g,h],[g,x],[h,x],[g,y],[h,y],x^2-y,y^2-y,[x,y]-x)$, with $g,h\in\G(H)$,  $x\in\Pp_{1,g}(H)$ and $y\in\Pp(H)$;
  \item[(88)] $\HH_8(\lambda):= \K\langle g,h,x\rangle/(g^2-1,h^2-1,[g,h],[g,x]-g(1-g),[h,x]-\lambda h(1-g),x^2-x)\otimes\K[y]/(y^2)$, with $g,h\in\G(H)$,  $x\in\Pp_{1,g}(H)$ and $y\in\Pp(H)$;
  \item[(89)] $\HH_{9}(\lambda):=\K\langle g,h,x,y\rangle/(g^2-1,h^2-1,[g,h],[g,x]-g(1-g),[h,x]-\lambda h(1-g),[g,y],[h,y],x^2-x-y,y^2,[x,y])$, with $g,h\in\G(H)$,  $x\in\Pp_{1,g}(H)$ and $y\in\Pp(H)$;
  \item[(90)] $\HH_{10}(\lambda):=\K\langle g,h,x,y\rangle/(g^2-1,h^2-1,[g,h],[g,x]-g(1-g),[h,x]-\lambda h(1-g),[g,y],[h,y],x^2-x,y^2,[x,y]-(1-g))$, with $g,h\in\G(H)$,  $x\in\Pp_{1,g}(H)$ and $y\in\Pp(H)$;
  \item[(91)] $\HH_{11}(\lambda,\gamma):=\K\langle g,h,x,y\rangle/(g^2-1,h^2-1,[g,h],[g,x]-g(1-g),[h,x]-\lambda h(1-g),[g,y],[h,y],x^2-x-\gamma y,y^2-y,[x,y])$,
with $g,h\in\G(H)$,  $x\in\Pp_{1,g}(H)$ and $y\in\Pp(H)$;
\begin{itemize}
\item $\HH_n(\lambda)\cong\HH_n(\gamma)$ for $n\in\I_{8,10}$, if and only if, $\lambda=\gamma+i$ for $i\in\I_{0,1}$;
\item $\HH_{11}(\lambda,\mu)\cong\HH_{11}(\gamma,\nu)$ if and only if $\lambda=\gamma+i$ for $i\in\I_{0,1}$ and $\mu=\nu$;
\end{itemize}

  \item[(92)] $\K[g,h,x,y]/(g^2-1,h^2-1,x^2,y^2)$, with $g,h\in\G(H)$ and $x,y\in\Pp_{1,g}(H)$;
  \item[(93)] $\K\langle g,h,x,y\rangle/(g^2-1,h^2-1,[g,h],[g,x],[g,y],[h,x]-h(1-g),[h,y],x^2,y^2,[x,y])$,
  with $g,h\in\G(H)$ and $x,y\in\Pp_{1,g}(H)$;
  \item[(94)] $\HH_{12}(\lambda):=\K\langle g,h,x,y\rangle/(g^2-1,h^2-1,[g,h],[g,x]-g(1-g),[g,y],[h,x]-\lambda h(1-g),[h,y],x^2-x,y^2,[x,y]+y)$, with $g,h\in\G(H)$ and $x,y\in\Pp_{1,g}(H)$;
  \item[(95)] $\HH_{13}(\lambda):=\K\langle g,h,x,y\rangle/(g^2-1,h^2-1,[g,h],[g,x]-g(1-g),[g,y],[h,x]-\lambda h(1-g),[h,y]-h(1-g),x^2-x,y^2,[x,y]+y)$,
with $g,h\in\G(H)$ and $x,y\in\Pp_{1,g}(H)$; moreover,
\begin{itemize}
\item $\HH_{n}(\lambda)\cong\HH_{n}(\gamma)$ for $n\in\I_{12,13}$, if and only if, $\lambda=\gamma+i$ for $i\in\I_{0,1}$;
\end{itemize}

  \item[(96)] $\K[g,h,x,y]/(g^2-1,h^2-1,x^2,y^2)$, with $g,h\in\G(H)$, $x\in\Pp_{1,g}(H)$ and $y\in\Pp_{1,h}(H)$;
  \item[(97)] $\K\langle g,h,x,y\rangle/(g^2-1,h^2-1,[g,h],[g,x],[g,y],[h,x],[h,y],x^2,y^2,[x,y]-(1-gh))$, with $g,h\in\G(H)$, $x\in\Pp_{1,g}(H)$ and $y\in\Pp_{1,h}(H)$;
  \item[(98)] $\K\langle g,h,x,y\rangle/(g^2-1,h^2-1,[g,h],[g,x]-g(1-g),[g,y],[h,x],[h,y],x^2-x,y^2,[x,y])$, with $g,h\in\G(H)$, $x\in\Pp_{1,g}(H)$ and $y\in\Pp_{1,h}(H)$;
  \item[(99)] $\K\langle g,h,x,y\rangle/(g^2-1,h^2-1,[g,h],[g,x]-g(1-g),[g,y],[h,x]-h(1-g),[h,y],x^2-x,y^2,[x,y]+y)$, with $g,h\in\G(H)$, $x\in\Pp_{1,g}(H)$ and $y\in\Pp_{1,h}(H)$;
  \item[(100)] $\K\langle g,h,x,y\rangle/(g^2-1,h^2-1,[g,h],[g,x]-g(1-g),[g,y],[h,x],[h,y]-h(1-h),x^2-x,y^2-y,[x,y])$,  with $g,h\in\G(H)$, $x\in\Pp_{1,g}(H)$ and $y\in\Pp_{1,h}(H)$;
  \item[(101)] $\K\langle g,h,x,y\rangle/(g^2-1,h^2-1,[g,h],[g,x]-g(1-g),[g,y]-g(1-h),[h,x]-h(1-g),[h,y]-h(1-h),x^2-x,y^2-y,[x,y]-x+y)$, with $g,h\in\G(H)$, $x\in\Pp_{1,g}(H)$ and $y\in\Pp_{1,h}(H)$;

  \item[(102)] $\K\langle g,h,x,y\rangle/(g^2-1,h^2-1,[g,h],[g,x],gy-(y+x)g,[h,x],hy-(y+\lambda x)h,[x,y],x^2,y^2)$,  with $g\in\G(H)$, $x,y\in\Pp(H)$;
  \item[(103)] $\K\langle g,h,x,y\rangle/(g^2-1,h^2-1,[g,h],[g,x],gy-(y+x)g,[h,x],hy-(y+\lambda x)h,[x,y],x^2-x,y^2)$, with $g\in\G(H)$, $x,y\in\Pp(H)$;
  \item[(104)] $\K\langle g,h,x,y\rangle/(g^2-1,h^2-1,[g,h],[g,x],gy-(y+x)g,[h,x],hy-(y+\lambda x)h,[x,y],x^2-y,y^2)$, with $g\in\G(H)$, $x,y\in\Pp(H)$;
  \item[(105)] $\K\langle g,h,x,y\rangle/(g^2-1,h^2-1,[g,h],[g,x],gy-(y+x)g,[h,x],hy-(y+\lambda x)h,[x,y],x^2-x,y^2-y)$, with $g\in\G(H)$, $x,y\in\Pp(H)$;
  \item[(106)] $\K\langle g,h,x,y\rangle/(g^2-1,h^2-1,[g,h],[g,x],gy-(y+x)g,[h,x],hy-(y+\lambda x)h,[x,y]-y,x^2-x,y^2)$, $\lambda\in\K$, with $g\in\G(H)$, $x,y\in\Pp(H)$;

  \item[(107)] $\K\langle g,h,x,y\rangle/(g^2-1,h^2-1,[g,h],[g,x],[h,x],gy-(y+x)g,[h,y], [x,y],x^2,y^2)$, with $g\in\G(H)$, $x,y\in\Pp_{1,h}(H)$;
   \item[(108)] $\K\langle g,h,x,y\rangle/(g^2-1,h^2-1,[g,h],[g,x],[h,x],gy-(y+x)g,[h,y]-h(1-h), [x,y]-x,x^2,y^2-y)$,  with $g\in\G(H)$, $x,y\in\Pp_{1,h}(H)$;

  \item[(109)] $\K[\Z_2]\otimes\K[x,y,z]/(x^2,y^2,z^2)$, with $x,y,z\in\Pp(H)$;
  \item[(100)] $\K[\Z_2]\otimes\K[x,y,z]/(x^2-x,y^2-y,z^2-z)$, with $x,y,z\in\Pp(H)$;
  \item[(111)] $\K[\Z_2]\otimes\K[x,y,z]/(x^2-y,y^2-z,z^2)$, with $x,y,z\in\Pp(H)$;
  \item[(112)] $\K[\Z_2]\otimes\K[x,y,z]/(x^2,y^2-z,z^2)$, with $x,y,z\in\Pp(H)$;
  \item[(113)] $\K[\Z_2]\otimes\K[x,y,z]/(x^2,y^2,z^2-z)$, with $x,y,z\in\Pp(H)$;
  \item[(114)] $\K[\Z_2]\otimes\K[x,y,z]/(x^2,y^2-y,z^2-z)$, with $x,y,z\in\Pp(H)$;
  \item[(115)] $\K[\Z_2]\otimes\K[x,y,z]/(x^2-y,y^2,z^2-z)$, with $x,y,z\in\Pp(H)$;
  \item[(116)] $\K[\Z_2]\otimes\K\langle x,y,z\rangle/([x,y]-z,[x,z],[y,z],x^2,y^2,z^2)$, with $x,y,z\in\Pp(H)$;
  \item[(117)] $\K[\Z_2]\otimes\K\langle x,y,z\rangle/([x,y]-z,[x,z],[y,z],x^2,y^2,z^2-z)$, with $x,y,z\in\Pp(H)$;
  \item[(118)] $\K[\Z_2]\otimes\K\langle x,y,z\rangle/([x,y]-y,[x,z],[y,z],x^2-x,y^2,z^2)$, with $x,y,z\in\Pp(H)$;
  \item[(119)] $\K[\Z_2]\otimes\K\langle x,y,z\rangle/([x,y]-y,[x,z],[y,z],x^2-x,y^2-z,z^2)$, with $x,y,z\in\Pp(H)$;
  \item[(120)] $\K[\Z_2]\otimes\K\langle x,y,z\rangle/([x,y]-y,[x,z],[y,z],x^2-x,y^2,z^2-z)$, with $x,y,z\in\Pp(H)$;
  \item[(121)] $\K[\Z_2]\otimes\K\langle x,y,z\rangle/([x,y]-y,[x,z],[y,z],x^2-x,y^2-z,z^2-z)$, with $x,y,z\in\Pp(H)$;
  \item[(122)] $\K[\Z_2]\otimes\K\langle x,y,z\rangle/([x,y],[x,z]=x,[y,z]=y,x^2,y^2,z^2-z)$, with $x,y,z\in\Pp(H)$;

  \item[(123)] $\K[g,x,y,z]/(g^2-1,x^2,y^2,z^2)$, with $g\in\G(H)$ and $x,y,z\in\Pp_{1,g}(H)$;
  \item[(124)] $\K\langle g,x,y,z\rangle/(g^4-1,[g,x]-g(1-g),[g,y],[g,z],[x,y]=y,[x,z]=z,[y,z],x^2-x,y^2,z^2)$, with $g\in\G(H)$ and $x,y,z\in\Pp_{1,g}(H)$;

  \item[(125)] $\K\langle g,x,y,z\rangle/(g^2-1,[g,x],[g,y],[g,z],[x,y]-z,[x,z],[y,z],x^2,y^2,z^2)$, with $g\in\G(H)$, $x,y\in\Pp_{1,g}(H)$ and $z\in\Pp(H)$;
  \item[(126)] $\K\langle g,x,y,z\rangle/(g^2-1,[g,x],[g,y],[g,z],[x,y]-z,[x,z],[y,z],x^2,y^2,z^2-z)$,
  \item[(127)] $\K[g,x,y]/(g^2-1,x^2,y^2)\otimes\K[z]/(z^2)$, with $g\in\G(H)$, $x,y\in\Pp_{1,g}(H)$ and $z\in\Pp(H)$;
  \item[(128)] $\K\langle g,x,y,z\rangle/(g^2-1,[g,x],[g,y],[g,z],[x,y],[x,z]-(1-g),[y,z],x^2,y^2,z^2)$, with $g\in\G(H)$, $x,y\in\Pp_{1,g}(H)$ and $z\in\Pp(H)$;
  \item[(129)] $\K\langle g,x,y,z\rangle/(g^2-1,[g,x],[g,y],[g,z],[x,y],[x,z]-y,[y,z],x^2,y^2,z^2)$,
  \item[(130)] $\K[g,x,y]/(g^2-1,x^2,y^2)\otimes\K[z]/(z^2-z)$, with $g\in\G(H)$, $x,y\in\Pp_{1,g}(H)$ and $z\in\Pp(H)$;
  \item[(131)] $\K\langle g,x,y,z\rangle/(g^2-1,[g,x],[g,y],[g,z],[x,y],[x,z]-x,[y,z],x^2,y^2,z^2-z)$, with $g\in\G(H)$, $x,y\in\Pp_{1,g}(H)$ and $z\in\Pp(H)$;
  \item[(132)] $\K\langle g,x,y,z\rangle/(g^2-1,[g,x],[g,y],[g,z],[x,y],[x,z]-x,[y,z]-y,x^2,y^2,z^2-z)$, with $g\in\G(H)$, $x,y\in\Pp_{1,g}(H)$ and $z\in\Pp(H)$;
  \item[(133)] $\K[g,x,y,z]/(g^2-1,x^2-z,y^2,z^2)$, with $g\in\G(H)$, $x,y\in\Pp_{1,g}(H)$ and $z\in\Pp(H)$;
  \item[(134)] $\K[g,x,y,z]/(g^2-1,x^2-z,y^2,z^2-z)$, with $g\in\G(H)$, $x,y\in\Pp_{1,g}(H)$ and $z\in\Pp(H)$;
  \item[(135)] $\K\langle g,x,y,z\rangle/(g^2-1,[g,x],[g,y],[g,z],[x,y]-z,[y,z],[x,z],x^2-z,y^2,z^2)$, with $g\in\G(H)$, $x,y\in\Pp_{1,g}(H)$ and $z\in\Pp(H)$;
  \item[(136)]  $\K\langle g,x,y,z\rangle/(g^2-1,[g,x],[g,y],[g,z],[x,y]-z,[y,z],[x,z],x^2-z,y^2,z^2-z)$, with $g\in\G(H)$, $x,y\in\Pp_{1,g}(H)$ and $z\in\Pp(H)$;
  \item[(137)] $\K\langle g,x,y,z\rangle/(g^2-1,[g,x]-g(1-g),[g,y],[g,z],[x,y]-y-z,[x,z],[y,z],x^2-x,y^2,z^2)$, with $g\in\G(H)$, $x,y\in\Pp_{1,g}(H)$ and $z\in\Pp(H)$;
  \item[(138)] $\K\langle g,x,y,z\rangle/(g^2-1,[g,x]-g(1-g),[g,y],[g,z],[x,y]-y-z,[x,z],[y,z],x^2-x,y^2,z^2-z)$, with $g\in\G(H)$, $x,y\in\Pp_{1,g}(H)$ and $z\in\Pp(H)$;
  \item[(139)] $\K\langle g,x,y\rangle/(g^2-1,[g,x]-g(1-g),[g,y], [x,y]-y,x^2-x,y^2)\otimes\K[z]/(z^2)$, with $g\in\G(H)$, $x,y\in\Pp_{1,g}(H)$ and $z\in\Pp(H)$;
  \item[(140)] $\K\langle g,x,y,z\rangle/(g^2-1,[g,x]-g(1-g),[g,y],[g,z],[x,y]-y,[x,z],[y,z]-(1-g),x^2-x,y^2,z^2)$, with $g\in\G(H)$, $x,y\in\Pp_{1,g}(H)$ and $z\in\Pp(H)$;
  \item[(141)] $\K\langle g,x,y,z\rangle/(g^2-1,[g,x]-g(1-g),[g,y],[g,z],[x,y]-y,[x,z]-(1-g),[y,z],x^2-x,y^2,z^2)$, with $g\in\G(H)$, $x,y\in\Pp_{1,g}(H)$ and $z\in\Pp(H)$;
  \item[(142)] $\K\langle g,x,y,z\rangle/(g^2-1,[g,x]-g(1-g),[g,y],[g,z],[x,y]-y,[x,z]-y,[y,z],x^2-x,y^2,z^2)$, with $g\in\G(H)$, $x,y\in\Pp_{1,g}(H)$ and $z\in\Pp(H)$;
  \item[(143)] $\K\langle g,x,y,z\rangle/(g^2-1,[g,x]-g(1-g),[g,y],[g,z],[x,y]-y,[x,z],[y,z],x^2-x-z,y^2,z^2)$, with $g\in\G(H)$, $x,y\in\Pp_{1,g}(H)$ and $z\in\Pp(H)$;
  \item[(144)] $\K\langle g,x,y,z\rangle/(g^2-1,[g,x]-g(1-g),[g,y],[g,z],[x,y]-y,[x,z],[y,z]-y,x^2-x,y^2,z^2-z)$, with $g\in\G(H)$, $x,y\in\Pp_{1,g}(H)$ and $z\in\Pp(H)$;
  \item[(145)] $\HH_{14}(\lambda):=\K\langle g,x,y,z\rangle/(g^2-1,[g,x]-g(1-g),[g,y],[g,z],[x,y]-y,[x,z],[y,z],x^2-x-\lambda  z,y^2,z^2- z)$, with $g\in\G(H)$, $x,y\in\Pp_{1,g}(H)$ and $z\in\Pp(H)$;
  \item[(146)] $\K\langle g,x,y,z\rangle/(g^2-1,[g,x]-g(1-g),[g,y],[g,z],[x,y]-y,[x,z],[y,z],x^2-x,y^2-z,z^2)$, with $g\in\G(H)$, $x,y\in\Pp_{1,g}(H)$ and $z\in\Pp(H)$;
  \item[(147)] $\K\langle g,x,y,z\rangle/(g^2-1,[g,x]-g(1-g),[g,y],[g,z],[x,y]-y-z,[x,z],[y,z],x^2-x,y^2-z,z^2)$, with $g\in\G(H)$, $x,y\in\Pp_{1,g}(H)$ and $z\in\Pp(H)$;
  \item[(148)] $\HH_{15}(\lambda):=\K\langle g,x,y,z\rangle/(g^2-1,[g,x]-g(1-g),[g,y],[g,z],[x,y]-y-\lambda z,[x,z],[y,z],x^2-x,y^2-z,z^2-z)$,
with $g\in\G(H)$, $x,y\in\Pp_{1,g}(H)$ and $z\in\Pp(H)$; Moreover,
\begin{itemize}
\item $\HH_{14}(\lambda)\cong\HH_{14}(\gamma)$ or $\HH_{15}(\lambda)\cong\HH_{15}(\gamma)$, if and only if, $\lambda=\gamma$;
\end{itemize}

  \item[(149)] $\HH_{16}(\lambda):=\K\langle g,x,y,z\rangle/(g^2-1,[g,x],[g,y],[g,z],[x,y]-\lambda x,[x,z],[y,z]-z,x^p,y^2-y,z^2)$, with $g\in\G(H)$£¬ $x\in\Pp_{1,g}(H)$ and $y,z\in\Pp(H)$;
  \item[(150)] $\K\langle g,x,y,z\rangle/(g^2-1,[g,x],[g,y],[g,z],[x,y]-x,[x,z]-(1-g),[y,z]-z,x^2,y^2-y,z^2)$, with $g\in\G(H)$£¬ $x\in\Pp_{1,g}(H)$ and $y,z\in\Pp(H)$;
  \item[(151)] $\K\langle g,x\rangle/(g^2-1,[g,x]-g(1-g),x^2-x)\otimes \K\langle y,z\rangle/(y^2-y,z^2,[y,z]-z)$, with $g\in\G(H)$£¬ $x\in\Pp_{1,g}(H)$ and $y,z\in\Pp(H)$;
  \item[(152)] $\K[ g,x]/(g^2-1, x^2)\otimes \K[y,z]/(y^2-y,z^2-z)$, with $g\in\G(H)$£¬ $x\in\Pp_{1,g}(H)$ and $y,z\in\Pp(H)$;
  \item[(153)]  $\HH_{17}(\lambda):=\K[ g,x,y,z]/(g^2-1, x^2-y-\lambda z,y^2-y,z^2-z)$,
  \item[(154)]  $\K\langle g,x,y,z\rangle/(g^2-1,[g,x],[g,y],[g,z],[x,y]-x,[x,z],[y,z],x^2,y^2-y,z^2-z)$, with $g\in\G(H)$£¬ $x\in\Pp_{1,g}(H)$ and $y,z\in\Pp(H)$;
  \item[(155)]  $\K\langle g,x,y,z\rangle/(g^2-1,[g,x],[g,y],[g,z],[x,y]-x,[x,z],[y,z],x^2- z,y^2-y,z^2-z)$, with $g\in\G(H)$£¬ $x\in\Pp_{1,g}(H)$ and $y,z\in\Pp(H)$;
  \item[(156)]  $\HH_{18}(\lambda,\gamma):=\K\langle g,x,y,z\rangle/(g^2-1,[g,x]-g(1-g),[g,y],[g,z],[x,y],[x,z],[y,z],x^2-x-\lambda y-\gamma z,y^2-y,z^2-z)$, with $g\in\G(H)$£¬ $x\in\Pp_{1,g}(H)$ and $y,z\in\Pp(H)$;
  \item[(157)] $\K[g,x]/(g^2-1,x^2)\otimes\K[y,z]/(y^2-y,z^2)$, with $g\in\G(H)$£¬ $x\in\Pp_{1,g}(H)$ and $y,z\in\Pp(H)$;
  \item[(158] $\K[g,x,y,z]/(g^2-1,x^2-z,y^2-y,z^2)$, with $g\in\G(H)$£¬ $x\in\Pp_{1,g}(H)$ and $y,z\in\Pp(H)$;
  \item[(159)] $\K[g,x,y,z]/(g^2-1,x^2-y,y^2-y,z^2)$, with $g\in\G(H)$£¬ $x\in\Pp_{1,g}(H)$ and $y,z\in\Pp(H)$;
  \item[(160)] $\K[g,x,y,z]/(g^2-1,x^2-y-z,y^2-y,z^2)$, with $g\in\G(H)$£¬ $x\in\Pp_{1,g}(H)$ and $y,z\in\Pp(H)$;
  \item[(161)] $\K\langle g,x,y,z\rangle/(g^2-1,[g,x],[g,y],[g,z],[x,y],[x,z]-(1-g),[y,z],x^2, y^2-y,z^2)$, with $g\in\G(H)$£¬ $x\in\Pp_{1,g}(H)$ and $y,z\in\Pp(H)$;
  \item[(162)] $\K\langle g,x,y,z\rangle/(g^2-1,[g,x],[g,y],[g,z],[x,y],[x,z]-(1-g),[y,z],x^2- y, y^2-y,z^2)$, with $g\in\G(H)$£¬ $x\in\Pp_{1,g}(H)$ and $y,z\in\Pp(H)$;
  \item[(163)] $\K\langle g,x,y,z\rangle/(g^2-1,[g,x],[g,y],[g,z],[x,y]-x,[x,z],[y,z],x^2, y^2-y,z^2)$, with $g\in\G(H)$£¬ $x\in\Pp_{1,g}(H)$ and $y,z\in\Pp(H)$;
  \item[(164] $\K\langle g,x,y,z\rangle/(g^2-1,[g,x],[g,y],[g,z],[x,y]-x,[x,z],[y,z],x^2-z, y^2-y,z^2)$, with $g\in\G(H)$£¬ $x\in\Pp_{1,g}(H)$ and $y,z\in\Pp(H)$;
  \item[(165)]  $\HH_{19}(\lambda,i):=\K\langle g,x,y,z\rangle/(g^2-1,[g,x]-g(1-g),[g,y],[g,z],[x,y],[x,z],[y,z],x^2-x-\lambda y-iz,y^2-y,z^2)$, for $i\in\I_{0,1}$, with $g\in\G(H)$£¬ $x\in\Pp_{1,g}(H)$ and $y,z\in\Pp(H)$;
  \item[(166)]  $\HH_{20}(\lambda):=\K\langle g,x,y,z\rangle/(g^2-1,[g,x]-g(1-g),[g,y],[g,z],[x,y],[x,z]-(1-g),[y,z],x^2-x-\lambda y, y^2-y,z^2)$, with $g\in\G(H)$£¬ $x\in\Pp_{1,g}(H)$ and $y,z\in\Pp(H)$;
  \item[(167)] $\K[g,x]/(g^2-1,x^2)\otimes\K[y,z]/(y^2-z,z^2)$, with $g\in\G(H)$£¬ $x\in\Pp_{1,g}(H)$ and $y,z\in\Pp(H)$;
  \item[(168)] $\K[g,x,y,z]/(g^2-1,x^2-z,y^2-z,z^2)$, with $g\in\G(H)$£¬ $x\in\Pp_{1,g}(H)$ and $y,z\in\Pp(H)$;
  \item[(169)] $\K[g,x,y,z]/(g^2-1,x^2-y,y^2-z,z^2)$, with $g\in\G(H)$£¬ $x\in\Pp_{1,g}(H)$ and $y,z\in\Pp(H)$;
  \item[(170)] $\K\langle g,x,y,z\rangle/(g^2-1,[g,x],[g,y],[g,z],[x,y]-(1-g),[y,z],[x,z],x^2,y^2-z,z^2)$, with $g\in\G(H)$£¬ $x\in\Pp_{1,g}(H)$ and $y,z\in\Pp(H)$;
  \item[(171)] $\K\langle g,x,y,z\rangle/(g^2-1,[g,x],[g,y],[g,z],[x,y]-(1-g),[y,z],[x,z],x^2- z,y^2-z,z^2)$, with $g\in\G(H)$£¬ $x\in\Pp_{1,g}(H)$ and $y,z\in\Pp(H)$;
  \item[(172)] $\K\langle g,x\rangle/(g^2-1,[g,x]-g(1-g),x^2-x)\otimes\K[y,z]/(y^2-z,z^2)$, with $g\in\G(H)$£¬ $x\in\Pp_{1,g}(H)$ and $y,z\in\Pp(H)$;
  \item[(173)] $\K\langle g,x,y,z\rangle/(g^2-1,[g,x]-g(1-g),[g,y],[g,z],[x,y],[x,z],[y,z],x^2-x-z,y^2-z,z^2)$, with $g\in\G(H)$£¬ $x\in\Pp_{1,g}(H)$ and $y,z\in\Pp(H)$;
  \item[(174)] $\K\langle g,x,y,z\rangle/(g^2-1,[g,x]-g(1-g),[g,y],[g,z],[x,y],[x,z],[y,z],x^2-x-y,y^2-z,z^2)$,
  \item[(175)]  $\HH_{21}(\lambda):=\K\langle g,x,y,z\rangle/(g^2-1,gx-xg-g(1-g),[g,y],[g,z],[x,y]-(1-g),[x,z],[y,z],x^2-x-\lambda z,y^2-z,z^2)$, with $g\in\G(H)$£¬ $x\in\Pp_{1,g}(H)$ and $y,z\in\Pp(H)$;
  \item[(176)] $\K[g,x]/(g^2-1,x^2)\otimes\K[y,z]/(y^2,z^2)$, with $g\in\G(H)$£¬ $x\in\Pp_{1,g}(H)$ and $y,z\in\Pp(H)$;
  \item[(177)] $\K\langle g,x\rangle/(g^2-1,[g,x]-g(1-g),x^2-x)\otimes\K[y,z]/(y^2,z^2)$, with $g\in\G(H)$£¬ $x\in\Pp_{1,g}(H)$ and $y,z\in\Pp(H)$;
  \item[(178)] $\K[g,x,y,z]/(g^2-1,x^2-y, y^2,z^2)$, with $g\in\G(H)$£¬ $x\in\Pp_{1,g}(H)$ and $y,z\in\Pp(H)$;
  \item[(179)] $\K\langle g,x,y,z\rangle/(g^2-1,[g,x]-g(1-g),[g,y],[g,z],[x,y],[x,z],[y,z],x^2-x-y,y^2,z^2)$, with $g\in\G(H)$£¬ $x\in\Pp_{1,g}(H)$ and $y,z\in\Pp(H)$;
  \item[(180)] $\K\langle g,x,y,z\rangle/(g^2-1,[g,x],[g,y],[g,z],[x,y]-(1-g),[x,z],[y,z],x^2,y^2,z^2)$, with $g\in\G(H)$£¬ $x\in\Pp_{1,g}(H)$ and $y,z\in\Pp(H)$;
  \item[(181)] $\K\langle g,x,y,z\rangle/(g^2-1,[g,x],[g,y],[g,z],[x,y]-(1-g),[x,z],[y,z],x^2-z,y^2,z^2)$, with $g\in\G(H)$£¬ $x\in\Pp_{1,g}(H)$ and $y,z\in\Pp(H)$;
  \item[(182)] $\K\langle g,x,y,z\rangle/(g^2-1,[g,x]-g(1-g),[g,y],[g,z],[x,y]-(1-g),[x,z],[y,z],x^2-x,y^2,z^2)$, with $g\in\G(H)$£¬ $x\in\Pp_{1,g}(H)$ and $y,z\in\Pp(H)$;
  \item[(183)] $\K\langle g,x,y,z\rangle/(g^2-1,[g,x]-g(1-g),[g,y],[g,z],[x,y]-(1-g),[x,z],[y,z],x^2-x-z,y^2,z^2)$,
with $g\in\G(H)$£¬ $x\in\Pp_{1,g}(H)$ and $y,z\in\Pp(H)$; Moreover,
\begin{itemize}
\item $\HH_{16}(\lambda)\cong \HH_{16}(\gamma)$ for $\lambda,\gamma\in\K$, if and only if, $\lambda=\gamma$;
\item $\HH_{17}(\lambda)\cong \HH_{17}(\gamma)$ for $\lambda,\gamma\in\K$, if and only if, there exist $\alpha_1,\alpha_2,\beta_1,\beta_2\in\K$ satisfying $\alpha_i^2-\alpha_i=0=\beta_i^2-\beta_i$ for $i\in\I_{1,2}$ such that $(\alpha_1+\beta_1\lambda)\gamma=(\alpha_2+\beta_2\lambda)$ and $\alpha_1\beta_2-\alpha_2\beta_1\neq 0$;
\item $\HH_{18}(\lambda,\gamma)\cong \HH_{18}(\mu,\nu)$ if and only if, there exist $\alpha_i,\beta_i\in\K$ satisfying $\alpha_i^2-\alpha_i=0=\beta_i^2-\beta_i$ for $i\in\I_{1,2}$ such that $\alpha_1\beta_2-\alpha_2\beta_1\neq 0$ and $\lambda\alpha_1+\gamma\beta_1=\mu$,  $\lambda\alpha_2+\gamma\beta_2=\nu$;
\item $\HH_{19}(\lambda,i)\cong \HH_{19}(\gamma,j)$ if and only if $\lambda =\gamma$ and $i=j$;
 \item $\HH_{20}(\lambda)\cong \HH_{20}(\gamma)$ or $\HH_{21}(\lambda)= \HH_{21}(\gamma)$, if and only if, $\lambda=\gamma$;
\end{itemize}
 \item[(184)] $\K\langle g,x,y,z\rangle/(g^2-1,[g,x],[g,y],gz-(z+y)g,[x,y],[x,z],[y,z],x^2,y^2,z^2)$, with $g\in\G(H)$, $x,y\in\Pp(H)$;
  \item[(185)] $\K\langle g,x,y,z\rangle/(g^2-1,[g,x],[g,y],gz-(z+y)g,[x,y],[x,z],[y,z],x^2-x,y^2-y,z^2-z)$, with $g\in\G(H)$, $x,y\in\Pp(H)$;
  \item[(186)]$\K\langle g,x,y,z\rangle/(g^2-1,[g,x],[g,y],gz-(z+y)g,[x,y],[x,z],[y,z],x^2-y,y^2-z,z^2)$, with $g\in\G(H)$, $x,y\in\Pp(H)$;
  \item[(187)] $\K\langle g,x,y,z\rangle/(g^2-1,[g,x],[g,y],gz-(z+y)g,[x,y],[x,z],[y,z],x^2,y^2-z,z^2)$, with $g\in\G(H)$, $x,y\in\Pp(H)$;
  \item[(188)] $\K\langle g,x,y,z\rangle/(g^2-1,[g,x],[g,y],gz-(z+y)g,[x,y],[x,z],[y,z],x^2,y^2,z^2-z)$, with $g\in\G(H)$, $x,y\in\Pp(H)$;
  \item[(189)] $\K\langle g,x,y,z\rangle/(g^2-1,[g,x],[g,y],gz-(z+y)g,[x,y],[x,z],[y,z],x^2,y^2-y,z^2-z)$,
      with $g\in\G(H)$, $x,y\in\Pp(H)$;
  \item[(190)] $\K\langle g,x,y,z\rangle/(g^2-1,[g,x],[g,y],gz-(z+y)g,[x,y],[x,z],[y,z],x^2-y,y^2,z^2-z)$,
      with $g\in\G(H)$, $x,y\in\Pp(H)$;
  \item[(191)] $\K\langle g,x,y,z\rangle/(g^2-1,[g,x],[g,y],gz-(z+y)g,[x,y]-z,[x,z],[y,z],x^2,y^2,z^2)$, with $g\in\G(H)$, $x,y\in\Pp(H)$;
  \item[(192)] $\K\langle g,x,y,z\rangle/(g^2-1,[g,x],[g,y],gz-(z+y)g,[x,y]-z,[x,z],[y,z],x^2,y^2,z^2-z)$, with $g\in\G(H)$, $x,y\in\Pp(H)$;
  \item[(193)] $\K\langle g,x,y,z\rangle/(g^2-1,[g,x],[g,y],gz-(z+y)g,[x,y]-y,[x,z],[y,z],x^2-x,y^2,z^2)$, with $g\in\G(H)$, $x,y\in\Pp(H)$;
  \item[(194)] $\K\langle g,x,y,z\rangle/(g^2-1,[g,x],[g,y],gz-(z+y)g,[x,y]-y,[x,z],[y,z],x^2-x,y^2-z,z^2)$, with $g\in\G(H)$, $x,y\in\Pp(H)$;
  \item[(195)] $\K\langle g,x,y,z\rangle/(g^2-1,[g,x],[g,y],gz-(z+y)g,[x,y]-y,[x,z],[y,z],x^2-x,y^2,z^2-z)$, with $g\in\G(H)$, $x,y\in\Pp(H)$;
  \item[(196)] $\K\langle g,x,y,z\rangle/(g^2-1,[g,x],[g,y],gz-(z+y)g,[x,y]-y,[x,z],[y,z],x^2-x,y^2-z,z^2-z)$, with $g\in\G(H)$, $x,y\in\Pp(H)$;
  \item[(197)] $\K\langle g,x,y,z\rangle/(g^2-1,[g,x],[g,y],gz-(z+y)g,[x,y],[x,z]=x,[y,z]=y,x^2,y^2,z^2-z)$, with $g\in\G(H)$, $x,y\in\Pp(H)$.
\end{description}
\end{thm}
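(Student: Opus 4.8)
The plan is to run the lifting method of \cite{AS98b} group-by-group. By Lemma \ref{lem:16-group-like} the coradical is $\K[G]$ with $G=\G(H)\in\{D_4,Q_8,C_8,C_4\times C_2,C_2\times C_2\times C_2,C_4,C_2\times C_2,C_2\}$ (a nontrivial group, since $H$ is non-connected), and since the diagram is a Nichols algebra we have $\gr H\cong\BN(V)\sharp\K[G]$ with $V=R(1)\in{}_G^G\mathcal{YD}$ the infinitesimal braiding. Because $\dim H=16$ and $\dim\K[G]=|G|$, necessarily $\dim\BN(V)=16/|G|$, which equals $2,4,8$ for $|G|=8,4,2$. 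The first task is, for each $G$, to list every $V\in{}_G^G\mathcal{YD}$ with $\dim\BN(V)=16/|G|$, using Theorem \ref{thm:indecomposable-object-YD-over-groups} to enumerate indecomposable Yetter--Drinfeld modules (and Proposition \ref{pro:indecomposable-object-cyclic-p-group} in the cyclic case), together with the dimension bound of Lemma \ref{lem:R-V} and the structure results of Lemmas \ref{lem:cyclic-groups-dimV=2} and \ref{lem:cyclic-groups-dimV=3}. In characteristic $2$ this forces $\dim V=1$ when $|G|=8$ (so $\BN(V)\cong\K[x]/(x^2)$), $\dim V=2$ when $|G|=4$, and $\dim V=3$ when $|G|=2$; the different families within each group come from the grading $g^i$ carried by each generator, i.e.\ from which space $\Pp_{1,g^i}(H)$ it lies in. Note that over $C_4$ and $C_2\times C_2$ the module $M_{0,2}$ has trivial (flip) braiding yet a Jordan-block action, which is exactly what yields the families carrying a relation $gy=(y+x)g$.

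Second, for each fixed $\gr H=\BN(V)\sharp\K[G]$ I would determine all liftings $H$. Concretely I replace each homogeneous defining relation of the bosonization by the same relation plus an unknown correction of strictly lower coradical degree, compute the comultiplication of each candidate relation to show it lands in some space $\Pp_{g,h}(H)$, and then use that $\Pp_{g,h}(H)$ is controlled by the coradical $\K[G]$ to pin down the correction up to scalars; the facts that $p$-th powers of skew-primitives are again skew-primitive, and the commutator formulas of Proposition \ref{proJ} and Lemmas \ref{pqlem1}, \ref{pqlem2}, are what force relations such as $x^2-(\cdots)$. After writing the generic family of relations, the Diamond Lemma \cite{B} is applied with a fixed monomial order to show $\dim H=16$, and resolving the overlap ambiguities yields exactly the compatibility conditions among the parameters. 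For $G=C_2$ the bosonization is $\K[g,x,y,z]/(g^2-1,x^2,y^2,z^2)$ up to the gradings of $x,y,z$, so this entire step is supplied by the $p^4$ computations of Section \ref{sec:p4} specialised to $p=2$ (Lemma \ref{lem:p4-x1y1z1}, Theorem \ref{thm:p4-x1y0z0}, Lemmas \ref{lem:p4-x1y1z0}, \ref{lem:p4-x1yu}, \ref{lem:p4-xg1yhu}), which is precisely why the families with coradical $\K[C_2]$ appear.

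Third, I normalise parameters by rescaling generators and by linear translations of the form $x\mapsto ax+b(1-g)$, pushing free scalars into $\I_{0,1}$ or into the reduced one- or two-parameter families $\HH_i$. Finally I settle the isomorphism problem: any Hopf isomorphism $\phi\colon H\to H'$ induces a group isomorphism $\G(H)\to\G(H')$ and maps $\Pp_{g,h}(H)$ into $\Pp_{\phi(g),\phi(h)}(H')$, and by Proposition \ref{pro:R11-4.3.3} it is controlled on $H_1$. Writing $\phi$ on each generator as a linear combination of the matching generators of $H'$ and imposing that $\phi$ respect every defining relation produces a system whose solvability is exactly the stated isomorphism criterion for each infinite family (for instance, applying $\phi$ to $[y,z]-z=0$ and then to $[x,y]-\lambda x=0$ forces $\lambda=\gamma$ for $\HH_1$, and similarly for the remaining families). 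The main obstacle is the volume and bookkeeping rather than any single conceptual point: one must carry the lifting-plus-Diamond-Lemma analysis through every pair $(G,V)$, including the non-abelian coradicals $D_4$ and $Q_8$ where $x\in\Pp_{1,g^2}(H)$ with $g^2$ central, and then separate close-by families and compute each isomorphism criterion; the Jordan-block cases over $C_4$, $C_2\times C_2$ and $C_2$ (the relations $gz=(z+y)g$) together with the several infinite families require the most delicate normalisation.
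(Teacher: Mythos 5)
Your proposal follows essentially the same route as the paper: restrict $\G(H)$ via Nichols--Zoeller, force $\dim\BN(V)=16/|\G(H)|$ and hence $\dim V=1,2,3$ for $|\G(H)|=8,4,2$ using Lemmas \ref{lem:R-V}, \ref{lem:cyclic-groups-dimV=2}, \ref{lem:cyclic-groups-dimV=3}, compute liftings by locating each deformed relation in a skew-primitive space and checking $\dim H=16$ with the Diamond Lemma (importing the Section \ref{sec:p4} computations for coradical $\K[C_2]$), then normalise by rescalings and translations $x\mapsto ax+b(1-g)$ and settle isomorphisms via the induced maps on group-likes and skew-primitives together with Proposition \ref{pro:R11-4.3.3}. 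This is exactly the paper's strategy, including the treatment of the Jordan-block modules $M_{0,2}$ producing the $gy=(y+x)g$ families, so the plan is correct; what remains is the (very long) case-by-case execution.
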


\begin{rmk}
By Theorem \ref{thm:16-diagram-Nichols-algebra}, there are 197 types of non-connected pointed Hopf algebras of dimension $16$ with $\Char\K=2$ whose diagrams are Nichols algebras. Up to isomorphism, there are infinitely many classes of such Hopf algebras.  In particular, we obtain infinitely many new examples of non-commutative   non-cocommutative pointed Hopf algebras.
\end{rmk}

Let $H$ be a non-trivial non-connected pointed Hopf algebra of dimension $16$. By Lemma \ref{lem:16-group-like}, $\G(H)$ is isomorphic to   $D_4$,   $Q_8$, $\Z_8$, $\Z_4\times \Z_2$, $\Z_2\times \Z_2\times \Z_2$, $\Z_4$, $\Z_2\times \Z_2$ or $\Z_2$. We will subsequently  prove Theorem \ref{thm:16-diagram-Nichols-algebra} by a case by case discussion. In what follows, $R$ is the diagram of $H$ and $V:=R(1)$. By assumption, $R\cong\BN(V)$.

\subsection{Coradical of dimension $8$}
Observe that  $\dim H_0=8$. Then $\dim R=2$. By Proposition \ref{pro-P-duality}, $\dim\BN(V)>2$ if $\dim V>1$. Hence $\dim V=1$ with a basis $\{x\}$ satisfying $c(x\otimes x)=x\otimes x$. Consequently, $R\cong\K[x]/(x^2)$.

\subsubsection{$\G(H)\cong D_4$.} Observe that  $\widehat{\G(H)}=\{\epsilon\}$ and $Z(D_4)=\{1,g^2\}$. Then by Remark \ref{rmk:dimV=1}, $x\in V_{g^{2\mu}}^{\epsilon}$ for $\mu\in\I_{0,1}$. Therefore,
\begin{gather*}
\gr H=\K\langle g,h,x\mid g^4=h^2=1,hg=g^3h, gx=xg,hx=xh,x^2=0\rangle,
\end{gather*}
with $g,h\in\G(H)$ and $x\in\Pp_{1,g^{2\mu}}(H)$ for $\mu\in\I_{0,1}$. Now we determine the liftings of $\gr H$.

By similar computations as before, we have
\begin{align*}
gx-xg=\lambda_1g(1-g^{2\mu}),\quad hx-xh=\lambda_2h(1-g^{2\mu}),\quad x^2=(\mu+1)\lambda_3x,
\end{align*}
for some $\lambda_1\in\I_{0,1}$, $\lambda_2,\lambda_2\in\K$.

If $\mu=0$, then $gx-xg=0=hx-xh$ and $x^2=\lambda_3x$ in $H$. By rescaling $x$, we can take $\lambda_3\in\I_{0,1}$, which gives two classes of $H$ described in $(1)$--$(2)$.   Clearly, they are  non-isomorphic.

If $\mu=1$, then $x^2=0$ in $H$. Applying the Diamond Lemma \cite{B} to show that $\dim H=16$, it suffices to show that the following ambiguities
\begin{align*}
(g^4)x=g^3(gx),\quad (h^2)x=h(hx),\quad (gh)x=g(hx),
\end{align*}
are resolvable with the order $x<h<g$. By Lemma \ref{pqlem1}, we have $[g^4,x]=0=[h^2,x]$ and hence the first two ambiguities are resolvable. Now we show that the ambiguity $(gh)x=g(hx)$ is resolvable:
\begin{align*}
g(hx)&=g(xh+\lambda_2h(1-g^2))=(gx)h+\lambda_2gh(1-g^2)=xhg^3+(\lambda_1+\lambda_2)hg^3(1-g^2),\\
&=(xh+\lambda_2h(1-g^2))g^3+\lambda_1hg^3(1-g^2)=(hx)g^3+\lambda_1hg^3(1-g^2)=(hg^3)x=(gh)x.
\end{align*}

If $\lambda_1=0$, then by rescaling $x$, we can take $\lambda_2\in\I_{0,1}$, which gives two classes of $H$ described in $(3)$--$(4)$.
If $\lambda_1=1$, then $H\cong\HH_1(\lambda_2)$ described in $(5)$.

Now we prove that $\HH_1(\lambda)\cong\HH_1(\gamma)$ for $\lambda,\gamma\in\K$, if and only if, $\lambda=\gamma+i$ for some $i\in\I_{0,1}$.

Observe that $\Aut(D_8)\cong D_8$ with generators $\psi_1,\psi_2$, where
\begin{align*}
\psi_1(g)=g,\quad \psi_1(h)=gh;\quad \psi_2(g)=g^{-1},\quad\psi_2(h)=h.
\end{align*}

Write $g^{\prime},h^{\prime},x^{\prime}$ to distinguish the generators of $\HH_1(\gamma)$. Suppose that $\phi:\HH_1(\lambda)\rightarrow \HH_1(\gamma)$ for $\lambda,\gamma\in\K$ is a Hopf algebra isomorphism. Then by Proposition \ref{pro:R11-4.3.3}, $\phi(\Pp_{1,g^2}(\HH_1(\lambda)))=\Pp_{1,(g^{\prime})^2}(\HH_1(\gamma))$ and $\phi|_{D_8}\in\Aut(D_8)$. Note that spaces of the skew-primitive elements of $\HH_1(\gamma)$ are trivial except $\Pp_{1,(g^{\prime})^2}(\HH_1(\gamma))=\K\{x^{\prime}\}\oplus \K\{1-(g^{\prime})^2\}$. Therefore,
\begin{align}\label{eq:HHHH-1}
\phi(g)\in\{g^{\prime},(g^{\prime})^3\},\quad \phi(h)=(g^{\prime})^ih^{\prime},\quad i\in\I_{0,3}
\end{align}
\begin{align}\label{eq:HHHH-2}
 \phi(x)=a(1-(g^{\prime})^2)+bx^{\prime},\quad \text{for some}\quad a,b\neq 0\in\K.
\end{align}
Applying $\phi$ to relation $gx-xg=g(1-g^2)$, then
\begin{align*}
\phi(gx-xg-g(1-g^2))&=\phi(g)\phi(x)-\phi(x)\phi(g)-\phi(g)(1-(g^{\prime})^2)\\
&=b\phi(g)x^{\prime}-bx^{\prime}\phi(g)-\phi(g)(1-(g^{\prime})^2)=(b-1)\phi(g)(1-(g^{\prime})^2)=0.
\end{align*}
Therefore, $b=1$. Then applying $\phi$ to the relations $hx-xh=\lambda h(1-g^2)$, then we have
\begin{align*}
\phi(h)x^{\prime}-x^{\prime}\phi(h)-\lambda\phi(h)(1-(g^{\prime})^2)=0.
\end{align*}

If $\phi(h)=(g^{\prime})^{2\mu}h^{\prime}$ for $\mu\in\I_{0,1}$, then $\phi(h)x^{\prime}-x^{\prime}\phi(h)=\gamma\phi(h)(1-(g^{\prime})^2)$ and hence $\gamma=\lambda$. If $\phi(h)=(g^{\prime})^{i}h^{\prime}$ for $i\in\{1,3\}$, then $\phi(h)x^{\prime}-x^{\prime}\phi(h)=(\gamma+1)\phi(h)(1-(g^{\prime})^2)$ and hence $\gamma+1=\lambda$. Consequently, we have
 $$\gamma=\lambda+i,\quad\text{for }i\in\I_{0,1}.$$

Conversely, for any $\lambda\in\K$, $i\in\I_{0,1}$, let $\psi:\HH_1(\lambda)\rightarrow \HH_1(\lambda+i)$ be the algebra map given by
\begin{align*}
\psi(g)=g^{\prime},\quad \psi(h)=(g^{\prime})^ih^{\prime},\quad \psi(x)=x^{\prime}+b(1-(g^{\prime})^2),\quad b\in\K.
\end{align*}
Observe that $\HH_1(\lambda)_1=\K\{g^ih^j,g^ih^jx\}_{i\in\I_{0,3},j\in\I_{0,2}}$.  It is easy to see that it is an epimorphism of Hopf algebras and $\psi|_{(\HH_1(\lambda))_1}$ is injective. By Proposition \ref{pro:R11-4.3.3}, $\psi$ is a Hopf algebra isomorphism.

Similarly, the Hopf algebras from items $(3)$--$(5)$ are pairwise non-isomorphic. Indeed,
there are not  elements $a, b, i\in\K$ such that that the morphism satisfying relations
\eqref{eq:HHHH-1} and \eqref{eq:HHHH-2} is an isomorphism.

\subsubsection{$\G(H)\cong Q_8$.}   Observe that $\widehat{Q_8}=\{\epsilon\}$ and $Z(Q_8)=\{1,g^2\}$. Then by Remark \ref{rmk:dimV=1}, $x\in V_{g^{2\mu}}^{\epsilon}$ for $\mu\in\I_{0,1}$. Therefore,
\begin{align*}
\gr H=\K\langle g,h,x\mid g^4=1,hg=g^3h, g^2=h^2, gx=xg,hx=xh,x^2=0\rangle,
\end{align*}
with $g,h\in\G(H)$ and $x\in\Pp_{1,g^{2\mu}}(H)$. Similar to the case $\G(H)\cong D_4$, the defining relations of $H$ are given by
\begin{gather*}
g^4=1,\quad hg=g^3h,\quad g^2=h^2,\\
gx-xg=\lambda_1g(1-g^{2\mu}),\quad hx-xh=\lambda_2h(1-g^{2\mu}),\quad x^2-\lambda_3x=0,
\end{gather*}
for some $\lambda_1\in\I_{0,1}$, $\lambda_2\in\K$ with the conditions $\lambda_3=0$ if $\mu=1$.

If $\mu=0$, then $gx-xg=0=hx-xh$ in $H$. Observe that $H$ is the tensor product Hopf algebra between $\K[Q_8]$ and $\K[x]/(x^2-\lambda_3x)$.  By rescaling $x$, we can take $\lambda_3\in\I_{0,1}$, which gives two classes of $H$ described in $(6)$--$(7)$.

If $\mu=1$, then it follows by a direct computation that the ambiguities $(g^4)x=g^3(gx)$, $(h^4)x=h^3(hx)$, $(gh)x=g(hx)$,
are resolvable with the order $x<h<g$ and hence $\dim H=16$.  If $\lambda_1=0$, then by rescaling $x$, we can take $\lambda_2\in\I_{0,1}$, which gives two classes of $H$ described in $(8)$--$(9)$. Indeed, if $\lambda_2=1$, then $H\cong\HH_2(0)$  by swapping $g$ and $h$. If $\lambda_1=1$, then $H\cong\HH_2(\lambda_2)$ described in $(9)$.

Now we prove that $\HH_2(\lambda)\cong\HH_2(\gamma)$ for $\lambda,\gamma\in\K $, if and only if, $\lambda=\gamma+i$ or $(\lambda-j)(\gamma-i)=1$ for $i,j\in\I_{0,1}$.

Observe that $\Aut(Q_8)\cong S_4$ with generators $\psi_1,\psi_2,\psi_3$ where
\begin{gather*}
\psi_1(g)=g^{-1},\quad \psi_1(h)=gh;\quad
\psi_2(g)=h,\quad \psi_2(h)=g;\quad
\psi_3(g)=gh,\quad \psi_3(h)=g^2h.
\end{gather*}

 Suppose that $\phi:\HH_2(\lambda)\rightarrow \HH_2(\gamma)$ for $\lambda,\gamma\in\K$ is a Hopf algebra isomorphism. Then $\phi|_{Q_8}:Q_8\rightarrow Q_8$ is an automorphism. Hence $\phi(g)\in\{g,g^3,h,g^2h,gh,g^3h\}$ and $\phi(h)\in\{g,g^3,h,g^2h,gh,g^3h\}-\{\phi(g),\phi(g^{-1})\}$. Write $g^{\prime},h^{\prime},x^{\prime}$ to distinguish the generators of $\HH_2(\gamma)$.
Since spaces of skew-primitive elements of $\HH_2(\gamma)$ are trivial except $\Pp_{1,(g^{\prime})^2}(\HH_2(\gamma))=\K\{x^{\prime}\}\oplus \K\{1-(g^{\prime})^2\}$, $\phi(x)=a(1-(g^{\prime})^2)+bx^{\prime}$ for some $a,b\neq 0\in\K$.

If $\phi(g)=(g^{\prime})^{2\mu}g^{\prime}$ for $\mu\in\I_{0,1}$, then $\phi(h)=(g^{\prime})^{2\nu}(g^{\prime})^ih^{\prime}$ for $i,\nu\in\I_{0,1}$. Applying $\phi$ to  the relations $gx-xg=g(1-g^2), hx-xh=\lambda h(1-g^2)$, we have
$$a=1,\quad \lambda=\gamma+i.$$

If $\phi(g)=(g^{\prime})^{2\mu}h^{\prime}$ for $\mu\in\I_{0,1}$, then $\phi(h)=(g^{\prime})^{2\nu}g^{\prime}(h^{\prime})^i$ for $i,\nu\in\I_{0,1}$. Applying $\phi$ to the relations $gx-xg=g(1-g^2), hx-xh=\lambda h(1-g^2)$, we have
$$a\gamma=1,\  a(1+i\gamma)=\lambda\quad \Rightarrow\quad (\lambda-i)\gamma=1.$$

If $\phi(g)=(g^{\prime})^{2\mu}g^{\prime}h^{\prime}$ for $\mu\in\I_{0,1}$, then $\phi(h)=(g^{\prime})^{2\nu}(g^{\prime})^i(h^{\prime})^j$ for $i,j,\nu\in\I_{0,1}$ satisfying $i+j=1$.
Applying $\phi$ to the relations $gx-xg=g(1-g^2), hx-xh=\lambda h(1-g^2)$, we have
$$   a(1+\gamma)=1,~a(i+j\gamma)=\lambda\quad \Rightarrow (\lambda-j)(\gamma+1)=1.$$

Conversely, if $\lambda=\gamma+i$ or $(\lambda-j)(\gamma-i)=1$ for $i,j\in\I_{0,1}$, then we can build an algebra map $\psi:\HH_2(\lambda)\rightarrow\HH_2(\gamma)$ in the form of $\phi$. It is easy to see that $\psi$ is a Hopf algebra epimorphism and $\psi|_{(\HH_2(\lambda))_1}$ is injective, which implies that $\psi$ is a Hopf algebra isomorphism.

Similarly, one can show that the Hopf algebras from items $(6)$--$(9)$ are pairwise non-isomorphic.

\subsubsection {$\G(H)\cong \Z_8$}  Then $\widehat{\Z_8}=\{\epsilon\}$ and $Z(\Z_8)=\Z_8:=\langle g\rangle$. Then by Remark \ref{rmk:dimV=1}, $x\in V_{g^{\mu}}^{\epsilon}$ for $\mu\in\I_{0,7}$. By changing the generator of $\Z_{8}$, we can take $\mu\in\{0,1,2,4\}$. Therefore,
\begin{gather*}
\gr H=\K\langle g, x\mid g^8=1,  gx=xg, x^2=0\rangle,
\end{gather*}
 with $g \in\G(H)$ and $x\in\Pp_{1,g^{\mu}}(H)$ for $\mu\in\{0,1,2,4\}$.  Then by a similar computation as before, we have
\begin{align*}
gx-xg=\lambda_1g(1-g^{\mu}),\quad \lambda_1\in\I_{0,1}.
\end{align*}
By induction, we have $[g^{\mu},x]=\mu\lambda_1g^{\mu}(1-g^{\mu})$. Then
\begin{gather*}
\Delta(x^2)=x^2\otimes 1+[g^{\mu},x]\otimes x+g^{2\mu}\otimes x^2=x^2\otimes 1+\mu\lambda_1g^{\mu}(1-g^{\mu})\otimes x+g^{2\mu}\otimes x^2,\\
\Delta(x^2-\mu\lambda_1x)=(x^2-\mu\lambda_1x)\otimes 1+g^{2\mu}\otimes(x^2-\mu\lambda_1x).
\end{gather*}

If $\mu=0$, then $gx-xg=0$ in $H$  and $\Pp(H)=\K\{x\}$. Hence $x^2=\lambda_2x$ for $\lambda_2\in\K$. Observe that $H\cong\K[\Z_8]\otimes\K[x]/(x^2-\lambda_2x)$. Then $\dim H=16$. By rescaling $x$, we can take $\lambda_2\in\I_{0,1}$, which gives two classes of $H$ described in $(10)$--$(11)$. Clearly, they are non-isomorphic.

If $\mu\neq 0$, then $\Pp_{1,g^{2\mu}}=\K\{1-g^{2\mu}\}$ and hence $x^2-\mu\lambda_1x=\lambda_3(1-g^{2\mu})$ for $\lambda_3\in\K$. Then we take $\lambda_3=0$ via the linear translation $x\mapsto x-a(1-g^{\mu})$ satisfying $a^2-\mu\lambda_1a=\lambda_3$. Indeed, it is easy to see that the linear translation is a Hopf algebra isomorphism. By Lemma \ref{pqlem1}, we have $[g,x^2]=0$, which implies that the ambiguity $(g^2)x=g(gx)$ is resolvable. By Proposition \ref{proJ},
\begin{align*}
[g,x^2]=[[g,x],x]=\lambda_1g(1-g^{\mu})-\lambda_1(\mu+1)g^{\mu+1}(1-g^{\mu}).
\end{align*}
Hence the ambiguity $g(x^2)=(gx)x$ imposes the condition $\lambda_1=0$ if $\mu=2$.
Then by Diamond Lemma, $\dim H=16$ with the condition: $\lambda_1=0$ if $\mu=2$.

If $\lambda_1=0$, then $H$ is the Hopf algebra described in $(12)$. If $\lambda_1=1$, then $\mu\in\{1,4\}$ and $H$ is the Hopf algebra described in $(13)$.    Obviously, the two Hopf algebras with $\mu=1$ and $\mu=4$ are non-isomorphic since they are not isomorphic as coalgebras.

\subsubsection{$\G(H)\cong \Z_4\times \Z_2=\langle g\rangle\times \langle h\rangle$.}  Then $\widehat{\Z_4\times \Z_2}=\{\epsilon\}$ and $Z(\Z_4\times \Z_2)=\Z_4\times \Z_2 $. Then by Remark \ref{rmk:dimV=1}, $x\in V_{g^{\mu}h^{\nu}}^{\epsilon}$ for $\mu\in\I_{0,3},\nu\in\I_{0,1}$.  Therefore,
\begin{gather*}
\gr H=\K\langle g,h, x\mid g^4=1,h^2=1,gh=gh,  gx=xg, x^2=0\rangle,
\end{gather*}
with $g \in\G(H)$ and $x\in\Pp_{1,g^{\mu}h^{\nu}}(H)$.

Observe that $\Aut(\Z_4\times \Z_2)\cong D_4$ with generators $\psi_1,\psi_2$, where
\begin{gather*}
\psi_1(g)=gh,\quad \psi_1(h)=g^2h;\quad \psi_2(g)=gh,\quad \psi_2(h)=h.
\end{gather*}
Then up to isomorphism,  we can take $(\mu,\nu)\in\{(0,0),(1,0),(2,0),(0,1)\}$. By similar computations as before, we have
\begin{align*}
gx-xg=\lambda_1g(1-g^{\mu}h^{\nu}),\quad hx-xh=\lambda_2h(1-g^{\mu}h^{\nu}),
\end{align*}
for some $\lambda_1,\lambda_2\in\K$. Then
\begin{align*}
\Delta(x^2)&=(x\otimes 1+g^{\mu}h^{\nu}\otimes x)^2=x^2\otimes 1+[g^{\mu}h^{\nu},x]\otimes x+g^{2\mu}\otimes x^2\\
&=x^2\otimes 1+(\mu\lambda_1+\nu\lambda_2)(g^{\mu}h^{\nu}-g^{2\mu})\otimes x+g^{2\mu}\otimes x^2.
\end{align*}
It is easy to see that $x^2-(\mu\lambda_1+\nu\lambda_2)x\in\Pp_{1,g^{2\mu}}(H)$.

If $(\mu,\nu)=(0,0)$, then $gx=xg,hx=xh$ in $H$ and $\Pp(H)=\K\{x\}$, which implies that $x^2=\lambda_3x$ for $\lambda_3\in\I_{0,1}$. In this case, $H\cong\K[\Z_4\times \Z_2]\otimes\K[x]/(x^2-\lambda_3x)$, which are described in $(14)$--$(15)$.

If $(\mu,\nu)\in\{(1,0),(2,0)\}$, then $\Pp_{1,g^{2\mu}}(H)=\K\{1-g^{2\mu}\}$ and hence $x^2-\mu\lambda_1x=\lambda_3(1-g^{2\mu})$ for some $\lambda_3\in\K$. We can take $\lambda_3=0$ via the linear translation $x\mapsto x-a(1-g^{\mu})$ satisfying $a^2-\mu\lambda_1\mu=\lambda_3$.
Similar to the case $\G(H)\cong \Z_8$, it follows by a direct computation that the ambiguities $(g^4)x=g^3(gx)$, $(h^2)x=h(hx)$, $g(x^2)=(gx)x$, $h(x^2)=(hx)x$ and $(gh)x=g(hx)$ are resolvable. Then by Diamond lemma, $\dim H=16$. By rescaling $x$, we can take $\lambda_1\in\I_{0,1}$. If $\lambda_1=0$, then by rescaling $x$, $\lambda_2\in\I_{0,1}$, which gives two classes of $H$ described in $(16)$--$(17)$. If $\lambda_1=1$, then $H\cong\HH_{3,\mu}(\lambda_2)$ described in $(18)$.  Obviously, $\HH_{3,1}(\lambda)$ and $\HH_{3,2}(\gamma)$ for any $\lambda,\gamma\in\K$ are non-isomorphic since their coalgebra structure are not isomorphic.

We claim that $\HH_{3,1}(\lambda)\cong\HH_{3,1}(\gamma)$, if and only if, $\lambda=\gamma$; $\HH_{3,2}(\lambda)\cong\HH_{3,2}(\gamma)$, if and only if, $\lambda=\gamma$ or $\lambda\gamma=\lambda+\gamma$.

Suppose that $\phi:\HH_{3,1}(\lambda)\rightarrow \HH_{3,1}(\gamma)$ for $\lambda,\gamma\in\K$ is a Hopf algebra isomorphism. Then $\phi|_{\Z_4\times \Z_2}:\Z_4\times \Z_2\rightarrow \Z_4\times \Z_2$ is an automorphism. Therefore, $\phi(g)\in\{g,g^3,gh,g^3h\}$ and $\phi(h)\in\{h,g^2h\}$.  Write $g^{\prime},h^{\prime},x^{\prime}$ to distinguish the generators of $\HH_{3,1}(\gamma)$. Since spaces of skew-primitive elements of $\HH_{3,1}(\gamma)$ are trivial except $\Pp_{1,g^{\prime}}(\HH_{3,1}(\gamma))=\K\{x^{\prime}\}\oplus \K\{1-g^{\prime}\}$, it follows that $$\phi(g)=g^{\prime},\quad\phi(x)=a(1- g^{\prime} )+bx^{\prime}$$ for some $a,b\neq 0\in\K$.
Applying $\phi$ to the relations $gx-xg=g(1-g)$ and $x^2-x=0$, then we have $b=1$. Observe that $\phi(h)\in\{h^{\prime},(g^{\prime})^2h^{\prime}\}$. Applying $\phi$ to the relations $hx-xh=\lambda h(1-g)$, then we have $\gamma=\lambda$. Similarly, we have $\HH_{3,2}(\lambda)\cong\HH_{3,2}(\gamma)$, if and only if, $\lambda=\gamma$ or $\lambda\gamma=\lambda+\gamma$.

If $(\mu,\nu)=(0,1)$, then $\Pp(H)=0$ and hence $x^2-\lambda_2x=0$. Then by rescaling $x$, $\lambda_2\in\I_{0,1}$. Similar to the last case, it follows by a direct computation that the ambiguities $(g^4)x=g^3(gx)$, $(h^2)x=h(hx)$, $g(x^2)=(gx)x$, $h(x^2)=(hx)x$ and $(gh)x=g(hx)$ are resolvable. Then by Diamond lemma, $\dim H=16$. If $\lambda_2=0$, then we can take $\lambda_1\in\I_{0,1}$, which gives two classes of $H$ described in $(19)$--$(20)$. If $\lambda_2=1$, then $H\cong\HH_4(\lambda_1)$ described in $(21)$. Similar to the last case, $\HH_4(\lambda)\cong\HH_4(\gamma)$, if and only if, $\lambda=\gamma+i$ for $i\in\I_{0,1}$.

\subsubsection{Case $\G(H)\cong \Z_2\times \Z_2\times \Z_2$.} Then $\widehat{\Z_2\times \Z_2\times \Z_2}=\{\epsilon\}$ and $Z(\Z_2\times \Z_2\times \Z_2)=\Z_2\times \Z_2\times \Z_2:=\langle g\rangle\times\langle h\rangle\times\langle k\rangle$. Then by Remark \ref{rmk:dimV=1}, $x\in V_{g^{\mu}h^{\nu}k^{\iota}}^{\epsilon}$ for $\mu,\nu,\iota\in\I_{0,1}$. Therefore,
\begin{align*}
 \gr H=\K\langle g,h,k, x\mid g^2=1,h^2=1,k^2=1,  gx=xg,hx=xh,kx=xk, x^2=0\rangle,
\end{align*}
 with $g,h,k\in\G(H)$ and $x\in\Pp_{1,g^{\mu}h^{\nu}k^{\iota}}(H)$. Then by a similar computation as before, we have
\begin{gather*}
gx-xg=\lambda_1g(1-g^{\mu}h^{\nu}k^{\iota}),\quad hx-xh=\lambda_2h(1-g^{\mu}h^{\nu}k^{\iota}),\quad kx-xk=\lambda_3k(1-g^{\mu}h^{\nu}k^{\iota}),\\
x^2-(\mu\lambda_1+\nu\lambda_2+\iota\lambda_3)x\in\Pp(H).
\end{gather*}
for some $\lambda_1,\lambda_2,\lambda_3\in\K$.  Observe that $\Z_2\times \Z_2\times \Z_2$ is 2-torsion. Then we can take $(\mu,\nu,\iota)=(0,0,0),(1,0,0)$.

If $(\mu,\nu,\iota)=(0,0,0)$, then $gx-xg=hx-xh=kx-xk=0$ in $H$ and $\Pp(H)=\K\{x\}$, which implies that $x^2=\lambda_4x$. By rescaling $x$, $\lambda_4\in\I_{0,1}$. Then $H\cong \K[\Z_2\times \Z_2\times \Z_2]\otimes\K[x]/(x^2-\lambda_4x)$, which gives two classes of $H$ described in $(22)$--$(23)$.

If $(\mu,\nu,\iota)=(1,0,0)$, then $\Pp(H)=0$ and hence $x^2-\lambda_1x=0$ in $H$. It follows by a direct computation that the ambiguities $(a^2)b=a(ab)$ and $(ab)c=a(bc)$ for $a,b,c\in\{g,h,k,x\}$ are resolvable. By Diamond lemma, $\dim H=16$. By rescaling $x$, we can take $\lambda_1\in\I_{0,1}$.

If $\lambda_1=0$, then we can take $\lambda_2\in\I_{0,1}$ by rescaling $x$. If $\lambda_2=0$, then we can also take $\lambda_3\in\I_{0,1}$, which gives two classes of $H$ described in $(24)$ and $(25)$. In fact, if $\lambda_3=1$, then $H\cong\HH_5(0)$.  If $\lambda_2=1$, then $H\cong\HH_5(\lambda_3)$. If $\lambda_1=1$, then $H\cong\HH_{6}(\lambda_2,\lambda_3)$ described in $(26)$.

We claim that $\HH_5(\lambda)\cong\HH_5(\gamma)$, if and only if,
\begin{align*}
\lambda\gamma=\lambda+\gamma,\quad\text{or }(1+\lambda)\gamma=1, \quad\text{or } \lambda=\gamma+i,\quad \text{or } 1+i\gamma=\lambda\gamma,\quad i\in\I_{0,1}.
\end{align*}

Suppose that $\phi:\HH_5(\lambda)\rightarrow \HH_5(\gamma)$ for $\lambda,\gamma\in\K$ is a Hopf algebra isomorphism. Then $\phi|_{\Z_2\times \Z_2\times \Z_2}:\Z_2\times \Z_2\times \Z_2\rightarrow \Z_2\times \Z_2\times \Z_2$ is an automorphism.   Write $g^{\prime},h^{\prime},x^{\prime}$ to distinguish the generators of $\HH_5(\gamma)$.
Since spaces of skew-primitive elements of $\HH_5(\gamma)$ are trivial except $\Pp_{1, g^{\prime} }(\HH_5(\gamma))=\K\{x^{\prime}\}\oplus \K\{1- g^{\prime} \}$, it follows that
$$\phi(g)=g^{\prime},\quad \phi(x)=a(1- g^{\prime} )+bx^{\prime}$$ for some $a,b\neq 0\in\K$. Let $\phi(h)=(g^{\prime})^p(h^{\prime})^q(k^{\prime})^r$ for $p,q,r\in\I_{0,1}$. Then applying $\phi$ to the relation $hx-xh=h(1-g)$, we have
$$(q+r\gamma)b=1.$$

Let $\phi(k)=(g^{\prime})^{\mu}(h^{\prime})^{\nu}(k^{\prime})^{\iota}$ for $\mu,\nu,\iota\in\I_{0,1}$. Then applying $\phi$ to the relation $kx-xk=\lambda k(1-g)$, we have
$$(\nu+\gamma\iota)b=\lambda.$$
Observe that $\phi|_{\G(\HH_{5}(\lambda))}$ is an isomorphism if and only if $q\iota+r\nu=1$. Hence by a case by case discussion, we have
\begin{align*}
\lambda\gamma=\lambda+\gamma,\quad\text{or }(1+\lambda)\gamma=1, \quad\text{or } \lambda=\gamma+i,\quad \text{or } 1+i\gamma=\lambda\gamma,\quad i\in\I_{0,1}.
\end{align*}

Conversely, if $\lambda\gamma=\lambda+\gamma$, then let $\psi:\HH_5(\lambda)\rightarrow\HH_5(\gamma)$ be the algebra given by
\begin{align*}
\psi(g)=g^{\prime},\quad \psi(h)=h^{\prime}k^{\prime},\quad \psi(k)=k^{\prime},\quad \psi(x)=(1-\lambda) x^{\prime};
\end{align*}
if $(1+\gamma)\lambda=1$, then let $\psi:\HH_5(\lambda)\rightarrow\HH_5(\gamma)$ be the algebra given by
\begin{align*}
\psi(g)=g^{\prime},\quad \psi(h)=h^{\prime}k^{\prime},\quad \psi(k)=h^{\prime},\quad \psi(x)=\lambda x^{\prime};
\end{align*}
if $i+\gamma=\lambda$ for $i\in\I_{0,1}$, then let $\psi:\HH_5(\lambda)\rightarrow\HH_5(\gamma)$ be the algebra given by
\begin{align*}
\psi(g)=g^{\prime},\quad \psi(h)=h^{\prime},\quad \psi(k)=(h^{\prime})^ik^{\prime},\quad \psi(x)= x^{\prime};
\end{align*}
if $1+i\gamma=\lambda\gamma$ for $i\in\I_{0,1}$, then let $\psi:\HH_5(\lambda)\rightarrow\HH_5(\gamma)$ be the algebra given by
\begin{align*}
\psi(g)=g^{\prime},\quad \psi(h)=k^{\prime},\quad \psi(k)= h^{\prime} (k^{\prime})^i,\quad \psi(x)= \gamma^{-1}x^{\prime}.
\end{align*}
It follows by a direct computation that $\psi$ is a well-defined Hopf algebra epimorphism. Observe that $\psi|_{\Pp_{1,g}(H_5(\lambda))}$ is injective. Then $\psi$ is a Hopf algebra isomorphism.

We claim that $\HH_6(\lambda_1,\lambda_2)\cong\HH_6(\gamma_1,\gamma_2)$, if and only if, there exists $q,r,\nu,\iota\in\I_{0,1}$ such that
\begin{align}\label{eq:HH6-condition-iso}
q\iota+r\nu=1,\quad q\gamma_1+r\gamma_2 =\lambda_1,\quad \nu\gamma_1+\iota\gamma_2=\lambda_2.
\end{align}

Suppose that $\phi:\HH_6(\lambda_1,\lambda_2)\rightarrow \HH_6(\gamma_1,\gamma_2)$ for $\lambda_1,\lambda_2,\gamma_1,\gamma_2\in\K$ is a Hopf algebra isomorphism. Similar to the last case, we have
$$\phi(g)=g^{\prime},\quad \phi(x)=a(1- g^{\prime} )+bx^{\prime}$$ for some $a,b\neq 0\in\K$. Applying $\phi$ to the relations $gx-xg=g(1-g), x^2-x=0$, we have $b=1$.

Let $\phi(h)=(g^{\prime})^p(h^{\prime})^q(k^{\prime})^r$ and $\phi(k)=(g^{\prime})^{\mu}(h^{\prime})^{\nu}(k^{\prime})^{\iota}$ for $\mu,\nu,\iota\in\I_{0,1}$, $p,q,r\in\I_{0,1}$. Observe that $q\iota+r\nu=1$ since $\phi$ is an isomorphism. Then applying $\phi$ to the relations $hx-xh=\lambda_1h(1-g)$ and $kx-xk=\lambda_2k(1-g)$, we have
$$q\gamma_1+r\gamma_2 =\lambda_1,\quad \nu\gamma_1+\iota\gamma_2=\lambda_2.$$

Conversely, if there exist $q,r,\nu,\iota$ satisfying conditions \eqref{eq:HH6-condition-iso}, then let $\psi:\HH_6(\lambda_1,\lambda_2)\rightarrow\HH_6(\gamma_1,\gamma_2)$ be the algebra defined by
\begin{align*}
\psi(g)=g^{\prime},\quad \psi(h)= (h^{\prime})^q(k^{\prime})^r,\quad \phi(k)= (h^{\prime})^{\nu}(k^{\prime})^{\iota},\quad \psi(x)= x^{\prime}.
\end{align*}
It follows by a direct computation that $\psi$ is a well-defined Hopf algebra epimorphism. Observe that $\psi|_{\Pp_{1,g}(H_6(\lambda_1,\lambda_2))}$ is injective. Then $\psi$ is a Hopf algebra isomorphism.

\subsection{Coradical of dimension $4$}
In this case, $\G(H)\cong \Z_4$ or $\Z_2\times \Z_2$. Then $\dim R=4$. Then by Proposition \ref{pro-P-duality}, $\dim V\leq 2$. If $\dim V=1$, then there is an element $x\in V$ such that $c(x\otimes x)=x\otimes x$, which implies that $\dim R=\dim\BN(\K\{x\})=2$, a contradiction. Therefore, $\dim V=2$. By Remark \ref{rmk-11-1--1}, $V$ is of diagonal type and hence $R\cong\K[x,y]/(x^2,y^2)$.

\subsubsection{$\G(H)\cong \Z_4:=\langle g\rangle$.} Then by Lemma \ref{lem:cyclic-groups-dimV=2}, $V\cong M_{i,1}\oplus M_{j,1}$ for $i,j\in\I_{0,3}$ or $M_{k,2}$ for $k\in\{0,2\}$.


Assume that $V\cong M_{i,1}\oplus M_{j,1}$ for $i,j\in\I_{0,3}$, that is,  $x\in V_{g^i}^{\epsilon},~y\in V_{g^j}^{\epsilon}$. Then
\begin{align*}
\gr H:=\K\langle g,x,y\mid g^4=1,gx=xg,gy=yg,x^2=0, y^2=0,xy-yx=0\rangle,
\end{align*}
with $g\in\G(H)$, $x\in\Pp_{1,g^i}(H)$ and $y\in\Pp_{1,g^j}(H)$. Observe that $\Aut(\Z_4)\cong \Z_2$. Up to isomorphism, we can take
$$
(i,j)\in\{(0,0),(0,1),(0,2),(1,1),(1,2),(1,3),(2,2)\}.
$$
By similar computations as before, we have
\begin{gather*}
gx-xg=\lambda_1g(1-g^i),\quad gy-yg=\lambda_2g(1-g^j),\\
x^2-i\lambda_1x\in\Pp_{1,g^{2i}}(H),\quad y^2-j\lambda_2y\in\Pp_{1,g^{2j}}(H),\\
xy-yx+\lambda_1jy-\lambda_2ix\in\Pp_{1,g^{i+j}}(H).
\end{gather*}
for $\lambda_1,\lambda_2\in\I_{0,1}$.

Assume that $(i,j)=(0,0)$. Then $gx=xg$, $gy=yg$ in $H$ and $\Pp(H)=\K\{x,y\}$. Then
\begin{align*}
x^2=\mu_1x+\mu_2y,\quad y^2=\mu_3x+\mu_4y,\quad xy-yx=\mu_5x+\mu_6y,
\end{align*}
for some $\mu_1,\mu_2,\cdots,\mu_6\in\K$. In this case, $H\cong\K[\Z_4]\otimes U(\Pp(H))$, where $U(\Pp(H))$ is the restricted universal enveloping algebra. Then by \cite[Theorem 7.4]{W1}, we obtain five classes of $H$ described in $(27)$--$(31)$.

Assume that $(i,j)=(0,1)$. Then $\Pp(H)=\K\{x\}$, $\Pp_{1,g}(H)=\K\{1-g,y\}$ and $\Pp_{1,g^2}(H)=\K\{1-g^2\}$. Hence
$$x^2=\mu_1x, \quad y^2-\lambda_2y=\mu_2(1-g^2),\quad xy-yx=\mu_3y+\mu_4(1-g),$$ for some $\mu_1,\mu_2,\mu_3,\mu_4\in\K$. We can take $\mu_1\in\I_{0,1}$ and  $\mu_2=0$ by rescaling $x,y$ and via the linear translation $y:=y-a(1-g)$ satisfying $a^2-\lambda_2a=\mu_2$. Then it follows by a direct computation that
\begin{align*}
[x,[x,y]]&=\mu_3[x,y]=\mu_3^2y+\mu_3\mu_4(1-g),\quad [x^2,y]=[\mu_1x,y]=\mu_1\mu_3y+\mu_1\mu_4(1-g),\\
[[x,y],y]&=-\mu_4[g,y]=-\mu_4\lambda_2g(1-g),\quad [x,y^2]=\lambda_2[x,y]=\lambda_2\mu_3y+\lambda_2\mu_4(1-g).
\end{align*}
By Proposition \ref{proJ}, $[x,[x,y]]=[x^2,y]$ and $[[x,y],y]=[x,y^2]$, which implies that
\begin{align*}
(\mu_1-\mu_3)\mu_3=0,\quad (\mu_1-\mu_3)\mu_4=0,\quad \lambda_2\mu_3=0,\quad \lambda_2\mu_4=0.
\end{align*}
Then it is easy to verify that the ambiguities $(g^4)x=g^3(gx)$, $(g^4)y=g^3(gx)$, $(x^2)y=x(xy)$, $(xy)y=x(y^2)$, $(gx)y=g(xy)$, $(x^2)x=x(x^2)$ and $(y^2)y=y(y^2)$ are resolvable. By Diamond lemma, $\dim H=16$.

If $\lambda_2=0=\mu_1$, then $\mu_3=0$ and we can take $\mu_4\in\I_{0,1}$ by rescaling $x$, which gives two classes of $H$ described in $(32)$ and $(33)$.

If $\lambda_2=0=\mu_1-1$, then $\mu_3^2=\mu_3$ and $\mu_4=\mu_3\mu_4$ and hence we can take $\mu_3\in\I_{0,1}$ by rescaling $x$. If $\mu_3=0$, then $\mu_4=0$, which gives one class of $H$ described in $(34)$. If $\mu_3=1$, then we can take $\mu_4=0$ via the linear translation $y\mapsto y-\mu_4(1-g)$, which gives one class of $H$ described in $(35)$.

If $\lambda_2=1$, then $\mu_3=0=\mu_4$, which gives two classes of $H$ described in $(36)$--$(37)$.

Assume that $(i,j)=(0,2)$. Then $\Pp(H)=\K\{x\}$, $\Pp_{1,g^2}(H)=\K\{1-g^2,y\}$. Hence
\begin{align*}
x^2=\mu_1x,\quad y^2=\mu_2x,\quad xy-yx=\mu_3y+\mu_4(1-g^2).
\end{align*}
From $[x,[x,y]]=[x^2,y]$,  $[[x,y],y]=[x,y^2]$, $(x^2)x=x(x^2)$ and $(y^2)y=y(y^2)$, we have
\begin{align*}
(\mu_1-\mu_3)\mu_3=0,\quad (\mu_1-\mu_3)\mu_4=0,\quad \mu_2\mu_3=0=\mu_2\mu_4.
\end{align*}
Then it is easy to verify that the ambiguities $(g^4)x=g^3(gx)$, $(g^4)y=g^3(gy)$, $(x^2)y=x(xy)$, $(xy)y=x(y^2)$, $(gx)y=g(xy)$ are resolvable. By Diamond lemma, $\dim H=16$.

By rescaling $x,y$, $\lambda_2,\mu_1\in\I_{0,1}$.

If $\mu_1=0$, then $\mu_3=0$ and $\mu_2\mu_4=0$. If $\mu_2=0$, then we can take $\mu_4\in\I_{0,1}$ by rescaling $x$. If $\mu_4=0$, then we can take $\mu_2\in\I_{0,1}$. Therefore, $(\mu_2,\mu_4)$ admits three possibilities and $\lambda_2\in\I_{0,1}$, which gives six classes of $H$ described in $(38)$--$(43)$.

If $\mu_1=1$, then $\mu_3^2=\mu_3$ and $\mu_4=\mu_3\mu_4$, which implies that $\mu_3\in\I_{0,1}$ by rescaling $x$.
\begin{itemize}

\item If $\mu_3=0$, then $\mu_4=0$, which impies that $xy-yx=0$ in $H$.
If $\lambda_2=0$, then by rescaling $y$, we can take $\mu_2\in\I_{0,1}$, which gives two classes of $H$ described in $(44)$--$(45)$. If $\lambda_2=1$, then $H\cong\HH_7(\mu_2)$ described in $(46)$.

\item If $\mu_3=1$, then $\mu_2=0$, that is, $y^2=0$ in $H$. Hence we can take $\mu_4=0$ via the linear translation $y\mapsto y-\mu_4(1-g^2)$. Indeed, it is easy to see that the translation is a well-defined Hopf algebra isomorphism. Therefore, we obtain two classes of $H$ described in $(47)$--$(48)$.
\end{itemize}

Now we claim that $\HH_7(\lambda)\cong\HH_7(\gamma)$, if and only if, $\lambda=\gamma$.

Suppose that $\phi:\HH_7(\lambda)\rightarrow \HH_7(\gamma)$ for $\lambda,\gamma\in\K$ is a Hopf algebra isomorphism.  Write $g^{\prime},x^{\prime},y^{\prime}$ to distinguish the generators of $\HH_7(\gamma)$.
Observe that spaces of skew-primitive elements of $\HH_7(\gamma)$ are trivial except $\Pp_{1,(g^{\prime})^2}(\HH_7(\gamma))=\K\{y^{\prime}\}\oplus \K\{1-(g^{\prime})^2\}$ and $\Pp(\HH_7(\gamma))=\K\{x^{\prime}\}$. Then
\begin{align*}
\phi(g)=g^{\prime\,\pm 1},\quad \phi(x)=\alpha x^{\prime},\quad \phi(y)=a(1-(g^{\prime})^2)+by^{\prime}
\end{align*}
for some $\alpha\neq 0, a,b\neq 0\in\K$. Applying $\phi$ to the relation $x^2-x=0$, we have $\alpha=1$. Applying $\phi$ to the relation $gy-yg=g(1-g^2)$, we have $b=1$. Then applying $\phi$ to the relation $y^2-\lambda x=0$, we have
\begin{align*}
\phi(y^2-\lambda x)=(y^{\prime})^2-\lambda x^{\prime}=(\gamma-\lambda)x^{\prime}=0\quad \Rightarrow\quad \gamma=\lambda.
\end{align*}

Assume that $(i,j)=(1,1)$. Then $\Pp_{1,g}(H)=\K\{1-g,x,y\}$ and $\Pp_{1,g^2}=\K\{1-g^2\}$. Hence
\begin{align*}
x^2-\lambda_1x=\mu_1(1-g^2),\quad y^2-\lambda_2y=\mu_2(1-g^2),\quad xy-yx+\lambda_1y-\lambda_2x=\mu_3(1-g^2),
\end{align*}
for $\mu_1,\mu_2,\mu_3\in\K$. It follows by a direct computation that all ambiguities are resolvable and hence by the Diamond lemma, $\dim H=16$. We can take $\mu_1=0=\mu_2$ via the linear translation $x\mapsto x-a(1-g)$, $y\mapsto y-b(1-g)$ satisfying $a^2-\lambda_1a=\mu_2$ and $b^2-\lambda_2b=\mu_3$. If $\lambda_1=0$ or $\lambda_2=0$, then we can take $\mu_3\in\I_{0,1}$ by rescaling $x$ or $y$.

If $\lambda_1=0=\lambda_2$, then $\mu_3\in\I_{0,1}$, which gives two classes of $H$ described in $(49)$--$(50)$. If $\lambda_1-1=0=\lambda_2$, then $\mu_3\in\I_{0,1}$, which gives two classes of $H$ described in $(51)$--$(52)$. If $\lambda_1=0=\lambda_2-1$, then $\mu_3\in\I_{0,1}$, which gives two classes of $H$ described in $(51)$--$(52)$ by swapping $x$ and $y$. If $\lambda_1=\lambda_2=1$, then $H$ is isomorphic to one of the Hopf algebras described in $(51)$--$(52)$. Indeed, in this case, consider the translation $y\mapsto y+x+a(1-g)$ satisfying $a^2=\mu_3$, it is easy to see that $H$ is isomorphic to the Hopf algebras defined by
\begin{align*}
\K\langle g,x,y\mid g^4=1, [g,x]=g(1-g), [g,y]=0, x^2=x,y^2=0,[x,y]=y+(a+\mu_3)(1-g^2)\rangle.
\end{align*}
If $a+\mu_3=0$, then $H$ is isomorphic to the Hopf algebra described in $(51)$. If $a+\mu_3\neq 0$, then by rescaling $y$, $H$ is isomorphic to the Hopf algebra described in $(52)$.

Assume that $(i,j)=(1,2)$. Then $\Pp(H)=0$, $\Pp_{1,g}(H)=\{1-g,x\}$, $\Pp_{1,g^2}(H)=\{1-g^2,y\}$ and $\Pp_{1,g^3}(H)=\K\{1-g^3\}$. Hence,
\begin{align*}
x^2-\lambda_1x=\mu_1y+\mu_2(1-g^2),\quad y^2=0,\quad xy-yx-\lambda_2x=\mu_3(1-g^3),
\end{align*}
for some $\mu_1,\mu_2,\mu_3\in\K$. The verification of the ambiguities $(a^2)b=a(ab)$ and $(ab)b=a(b^2)$ for all $a,b\in\{g,x,y\}$ and $(gx)y=g(xy)$ amount to the conditions
\begin{align*}
\mu_1\lambda_2=0=\mu_1\mu_3,\quad \lambda_2=0.
\end{align*}
Then by Diamond lemma, $\dim H=16$. We can take $\mu_2=0$ via the linear translation $x:=x-a(1-g)$ satisfying $a^2-\lambda_1a=\mu_2$ and take $\mu_3\in\I_{0,1}$ by rescaling $y$.

If $\lambda_1=0$, then we can take $\mu_1\in\I_{0,1}$ by rescaling $x$,  which gives three classes of $H$ described in $(53)$--$(55)$. If $\lambda_1-1=0=\mu_3$, then by rescaling $y$, we can take $\mu_1\in\I_{0,1}$, which gives two classes of $H$ described in $(56)$--$(57)$. If $\lambda_1-1=0=\mu_3-1$, then $\mu_1=0$, which gives one class of $H$ described in $(58)$.

Assume that $(i,j)=(1,3)$. Then $\Pp(H)=0$, $\Pp_{1,g}(H)=\K\{1-g,x\}$, $\Pp_{1,g^2}(H)=\K\{1-g^2\}$ and $\Pp_{1,g^3}(H)=\K\{1-g^3,y\}$. Hence
\begin{align*}
x^2-\lambda_1x=\mu_1(1-g^2),\quad y^2-\lambda_2y=\mu_2(1-g^2),\quad [x,y]+\lambda_1y-\lambda_2x=0,
\end{align*}
for some $\mu_1,\mu_2\in\K$. It follows by a direct computation that all ambiguities are resolvable and hence by Diamond lemma, $\dim H=16$. Then we can take $\mu_1=0=\mu_2$ via the linear translation $x\mapsto x-a(1-g),~y\mapsto y-b(1-g^3)$ satisfying $a^2-a\lambda_1=\mu_1,b^2-b\lambda_2=\mu_2$. Therefore, the structure of $H$ depends on $\lambda_1,\lambda_2\in\I_{0,1}$, denoted by $H(\lambda_1,\lambda_2)$.

We claim that $H(0,1)\cong H(1,0)$. Indeed, consider the the algebra map $\phi: H(0,1)\rightarrow H(1,0)$ given by $\phi(g)=g^3$, $\phi(x)=y$ and $\phi(y)=x$. It follows by a direct computation that $\phi$ is a Hopf algebra morphism. Obviously, $\phi$ is an epimorphism and $\phi|_{(H(0,1))_1}$ is injective. Therefore, $\phi$ is an isomorphism. It is easy to see that $H(0,0)$, $H(1,0)$ and $H(1,1)$ are pairwise non-isomorphic. Therefore, we obtain three classes of $H$ described in $(59)$--$(61)$.

Assume that $(i,j)=(2,2)$. Then $\Pp(H)=0$. Hence
\begin{align*}
x^2=0,\quad y^2=0,\quad xy-yx=0.
\end{align*}
Then it is easy to see that all ambiguities are resolvable and hence by the Diamond lemma, $\dim H=16$. Similar to the last case, we obtain two classes of $H$ described in $(62)$--$(63)$.

Assume that $V\cong  M_{k,2}$ for $k\in\{0,2\}$. Then
\begin{gather*}
\gr H:=\K\langle g,x,y\mid g^4-1,gx=xg,gy=(y+x)g,xy-yx,x^2,y^2\rangle;
\end{gather*}
  with  $ g\in\G(\gr H), x,y\in\Pp_{1,g^{2k}}(\gr H)$ for $k\in\I_{0,1}$. By similar computations as before, we have
\begin{gather*}
gx-xg=\lambda_1(g-g^{2k+1}),\quad gy-(y+x)g=\lambda_2(g-g^{2k+1}),\quad xy-yx,x^2,y^2\in\Pp(H),
\end{gather*}
for some $\lambda_1,\lambda_2\in\K$.

If $k=0$, then $\Pp(H)=\K\{x,y\}$, which implies that
\begin{align*}
x^2=\alpha_1x+\alpha_2y,\quad y^2=\alpha_3x+\alpha_4y,\quad xy-yx=\alpha_4x+\alpha_6y;
\end{align*}
for some $\alpha_1,\cdots,\alpha_6\in\K$.
Observe that $\Pp(H)$ is a two-dimensional restricted Lie algebra and $H\cong\K[\Z_4]\sharp U(\Pp(H)$, where $U(\Pp(H))$ is the restricted universal enveloping algebra. Then by \cite[Theorem 7.4]{W1}, we obtain five classes of $H$ described in $(64)$--$(68)$.

If $k=1$, then $\Pp(H)=0$ and hence the defining relations of $H$ are
\begin{gather*}
gx-xg=\lambda_1(g-g^{3}),\quad gy-(y+x)g=\lambda_2(g-g^{3}),\quad xy-yx=x^2=y^2=0.
\end{gather*}
The verification of the ambiguities $(a^2)b=a(ab)$ and $(ab)b=a(b^2)$ for all $a,b\in\{g,x,y\}$ and $(gx)y=g(xy)$ gives no conditions.
Then by Diamond lemma, $\dim H=16$. We write $H(\lambda_1,\lambda_2):=H$ for convenience.

\textbf{Cliam:} $H(\lambda_1,\lambda_2)\cong H(\gamma_1,\gamma_2)$, if and only if, there exist $\alpha_1,\alpha_2\neq 0,\beta_2\in\K$ such that $\alpha_2\gamma_1=\lambda_1$ and $\beta_2\gamma_1-\alpha_1+\alpha_2\gamma_2-\lambda_2=0$.

Suppose that $\phi: H(\lambda_1,\lambda_2)\rightarrow H(\gamma_1,\gamma_2)$ for $\lambda_1,\lambda_2,\gamma_1,\gamma_2\in\K$ is a Hopf algebra isomorphism.  Write $g^{\prime},x^{\prime},y^{\prime}$ to distinguish the generators of $H(\gamma_1,\gamma_2)$.
  Then
\begin{align*}
\phi(g)=g^{\prime\,\pm 1},\quad \phi(x)=\alpha_1(1-(g^{\prime})^2)+\alpha_2x^{\prime}+\alpha_3y^{\prime},\quad \phi(y)=\beta_1(1-(g^{\prime})^2)+\beta_2x^{\prime}+\beta_3y^{\prime}
\end{align*}
for some $\alpha_1,\alpha_2,\alpha_3,\beta_1,\beta_2,\beta_3\in\K$. Applying $\phi$ to the relation $gx-xg=\lambda_1g(1-g^2)$, we have $\alpha_3=0=\alpha_2\gamma_1-\gamma_1$. Then applying $\phi$ to the relation $gy-(y+x)g=\lambda_2g(1-g^2)$, we have
\begin{align*}
\beta_3=\alpha_2,\quad \beta_2\gamma_1-\alpha_1+\gamma_2\beta_3-\lambda_2=0.
\end{align*}
Then it is easy to check that $\phi$ is a well-defined bialgebra map. Since $\phi$ is an isomorphism, it follows that $\alpha_2\neq 0$. Consequently, the claim follows.

By rescaling $x$, we can take $\lambda_1\in\I_{0,1}$. Then from the last claim, we have $H(\lambda_1,0)\cong H(\lambda_1,\lambda_2)$ for $\lambda_1\in\I_{0,1}$ and $H(0,0)\not\cong H(1,0)$. Consequently, we obtain two classes of $H$ described in $(69)$--$(70)$.
\subsubsection{$\G(H)\cong \Z_2\times \Z_2:=\langle g\rangle\times\langle h\rangle$.} If $V$ is a decomposable object in ${}_{\Z_2\times \Z_2}^{\Z_2\times \Z_2}\mathcal{YD}$, then $V:=\K\{x,y\}$ must be the sum of two one-dimensional objects in ${}_{\Z_2\times \Z_2}^{\Z_2\times \Z_2}\mathcal{YD}$ such that $x\in V_{g^ih^j}^{\epsilon},~y\in V_{g^{\mu}h^{\mu}}^{\epsilon}$ for $i,j,\mu,\nu\in\I_{0,1}$. If $V$ is an indecomposable object in  ${}_{\Z_2\times \Z_2}^{\Z_2\times \Z_2}\mathcal{YD}$, then by \cite{Ba} and Theorem \ref{thm:indecomposable-object-YD-over-groups}, $V:=\K\{x,y\}\in{}_{\Z_2\times \Z_2}^{\Z_2\times \Z_2}\mathcal{YD}$ by
\begin{gather*}
g\cdot x=x,\quad g\cdot y=y+x,\quad h\cdot x=x,\quad h\cdot y=y+\lambda x,\quad \lambda\in\K;\\
\delta(x)=g^kh^l\otimes x,\quad \delta(y)=g^kh^l\otimes y,\quad\text{for some }k,l\in\I_{0,1}.
\end{gather*}
We claim that $(k,l,\lambda)\in\{(0,0,\lambda),(0,1,0),(1,1,1)\}$; otherwise, $V$ is of Jordan type, a contradication.

Assume that $V$ is a decomposable object in ${}_{\Z_2\times \Z_2}^{\Z_2\times \Z_2}\mathcal{YD}$. Then $x\in V_{g^ih^j}^{\epsilon},~y\in V_{g^{\mu}h^{\mu}}^{\epsilon}$ for $i,j,\mu,\nu\in\I_{0,1}$. Without loss of generality, we may assume that $x,y\in V_{1}$, $x\in V_{g},y\in V_{g^i}$ for $i\in\I_{0,1}$ or $x\in V_{g},y\in V_{h}$.

Assume that $x,y\in V_{1}^{\epsilon}$. Then $H\cong\K[\Z_2\times \Z_2]\otimes U(\Pp(H))$, where $U(\Pp(H))$ is the restricted universal enveloping algebra of $\Pp(H)$. Then by \cite[Theorem 7.4]{W1}, we obtain five classes of $H$ described in $(71)$--$(75)$.

Assume that $x\in V_{g}^{\epsilon}, y\in V_{1}^{\epsilon}$. Then by Lemma \ref{lem:p4-x1yu}, the defining relations of $H$ are
\begin{gather*}
g^2=1,\quad h^{2}=1,\quad gx-xg=\lambda_1g(1-g), \quad gy-yg=0,\\
 hx-xh=\lambda_3h(1-g),\quad hy-yh=0,\\
x^2-\lambda_1x=\mu_1y,\quad y^2=\mu_2y,\quad xy-yx=\mu_3x+\mu_4(1-g),
\end{gather*}
for $\lambda_1\in\I_{0,1},\lambda_3,\mu_1,\cdots,\mu_4\in\K$ with the conditions
\begin{align*}
\mu_1\mu_3=0=\mu_1\mu_4,\quad \mu_2\mu_3=\mu_3^2,\quad \mu_2\mu_4=\mu_3\mu_4,\quad \lambda_1\mu_3=0=\lambda_3\mu_3.
\end{align*}
By rescaling $y$, we can take $\mu_2\in\I_{0,1}$.

If $\lambda_1=0=\mu_2$, then $\mu_3=0=\mu_1\mu_4$ and we can take $\lambda_3,\mu_4\in\I_{0,1}$ by rescaling $x,y$. If $\mu_4=0$, then by rescaling $y$, $\mu_1\in\I_{0,1}$. If $\mu_4\neq 0$, then $\mu_1=0$ and we can take $\mu_4=1$ by rescaling $y$. Therefore, we obtain six classes of $H$ described in $(76)$--$(81)$.

If $\lambda_1=0=\mu_2-1$, then $\mu_3=\mu_3^2$, $(\mu_3-1)\mu_4=0$ and $\mu_1\mu_4=0=\lambda_3\mu_3$. We can take $\mu_3\in\I_{0,1}$ by rescaling $y$. If $\mu_3=0$, then $\mu_4=0$ and we can take $\lambda_3\in\I_{0,1}$ by rescaling $x$, which gives four classes of $H$ described in $(82)$--$(85)$.  If $\mu_3=1$, then $\lambda_3=0$ and we can take $\mu_1\in\I_{0,1}$ by rescaling $x$. If $\mu_1=0$, then we can take $\mu_4=0$ via the linear translation $x:=x+\mu_4(1-g)$, which gives one class of $H$ described in $(86)$. If $\mu_1=1$, then $\mu_4=0$, which gives one class of $H$ described in $(87)$.

If $\lambda_1-1=0=\mu_2$, then $\mu_3=0$ and $\mu_1\mu_4=0$. If $\mu_1=0=\mu_4$, then $H\cong \HH_{8}(\lambda_3)$ described in $(88)$. If $\mu_1\neq 0$, then $\mu_4=0$ and we can take $\mu_1=1$ by rescaling $y$, which implies that $H\cong\HH_{9}(\lambda_3)$ described in $(89)$. If $\mu_1=0$ and $\mu_4\neq 0$, then by rescaling $y$, $\mu_4=1$, which implies that $H\cong\HH_{10}(\lambda_3)$ described in $(90)$.

If $\lambda_1=\mu_2=1$, then $\mu_3=0=\mu_4$ and hence $H\cong\HH_{11}(\lambda_3,\mu_1)$ described in $(91)$.

\textbf{Claim:} $\HH_n(\lambda)\cong\HH_n(\gamma)$ for $n\in\I_{8,10}$, if and only if, $\lambda=\gamma+i$ for $i\in\I_{0,1}$; $\HH_{11}(\lambda,\mu)\cong\HH_{11}(\gamma,\nu)$ if and only if $\lambda=\gamma+i$ for $i\in\I_{0,1}$ and $\mu=\nu$.

Suppose that $\phi:\HH_8(\lambda)\rightarrow \HH_8(\gamma)$ for $\lambda,\gamma\in\K$ is a Hopf algebra isomorphism. Then $\phi|_{\Z_2\times \Z_2}:\Z_2\times \Z_2\rightarrow \Z_2\times \Z_2$ is an automorphism.   Write $g^{\prime},h^{\prime},x^{\prime}$ to distinguish the generators of $\HH_8(\gamma)$.
Since spaces of skew-primitive elements of $\HH_8(\gamma)$ are trivial except $\Pp_{1, g^{\prime} }(\HH_8(\gamma))=\K\{x^{\prime}\}\oplus \K\{1- g^{\prime} \}$ and $\Pp(\HH_8(\gamma))=\K\{y^{\prime}\}$, it follows that
$$\phi(g)=g^{\prime},\quad \phi(h)=(g^{\prime})^ih^{\prime},\quad \phi(x)=a(1- g^{\prime} )+bx^{\prime},\quad \phi(y)=cy^{\prime}$$ for some $a,b\neq 0,c\neq 0\in\K$ and $i\in\I_{0,1}$. Then applying $\phi$ to the relations $gx-xg=g(1-g)$ and $hx-xh=\lambda h(1-g)$, we have
$$b=1\quad b(i+\gamma)=\lambda\quad \Rightarrow\quad i+\gamma =\lambda .$$

Conversely, if $\lambda=\gamma+i$ for $i\in\I_{0,1}$, then consider the algebra map $\psi:\HH_{8}(\lambda)\rightarrow\HH_8(\gamma), g\rightarrow g, h\rightarrow g^ih, x\rightarrow x,y\rightarrow y$. It is easy to see that $\psi$ is a Hopf algebra epimorphism and $\psi|_{\HH_8(\lambda)_1}$ is injective. Therefore, $\HH_8(\lambda)\cong\HH_8(\gamma)$.

Similarly, $\HH_n(\lambda)\cong\HH_n(\gamma)$ for $n\in\I_{9,10}$, if and only if, $\lambda=\gamma+i$ for $i\in\I_{0,1}$; $\HH_{11}(\lambda,\mu)\cong\HH_{11}(\gamma,\nu)$ if and only if $\lambda=\gamma+i$ for $i\in\I_{0,1}$ and $\mu=\nu$.

Assume that $x,y\in V_{g}^{\epsilon}$. Then by Lemma \ref{lem:p4-x1yu}, the defining relations of $H$ are
\begin{gather*}
g^2=1,\quad h^{2}=1,\quad  gx-xg=\lambda_1g(1-g), \quad gy-yg=\lambda_2g(1-g ),\\
hx-xh=\lambda_3h(1-g),\quad hy-yh=\lambda_4h(1-g ),\\
x^2-\lambda_1x=0,\quad y^2- \lambda_2y=0,\quad xy-yx+ \lambda_1y-\lambda_2x=0.
\end{gather*}
for $\lambda_1,\lambda_2\in\I_{0,1},\lambda_3,\cdots,\lambda_5\in\K$.

If $\lambda_1=0=\lambda_2$, then we can take $\lambda_3,\lambda_4\in\I_{0,1}$ by rescaling $x,y$, which gives two classes of $H$ described in $(92)$--$(93)$. Let $H:=H(\lambda_3,\lambda_4)$ for convenience. Indeed, $H(1,0)\cong H(0,1)$ by swapping $x$ and $y$; $H(1,0)\cong H(1,1)$ via the Hopf algebra isomorphism $\phi:H(1,0)\rightarrow H(1,1)$ defined by
$$\phi(g)=g,\quad \phi(h)=h,\quad \phi(x)=x,\quad \phi(y)=x+y.$$
Moreover, $H(0,0)$ and $H(1,0)$ are not isomorphic since $H(0,0)$ is commutative while $H(1,0)$ is not commutative.

If $\lambda_1-1=0=\lambda_2$, then we can take $\lambda_4\in\I_{0,1}$ by rescaling $y$. If $\lambda_4=0$, then $H\cong\HH_{12}(\lambda_3)$ described in $(94)$. If $\lambda_4=1$, then $H\cong\HH_{13}(\lambda_3)$ described in $(95)$.

If $\lambda_1=0=\lambda_2-1$, then $H$ is isomorphic to one of the Hopf algebras described in $(94)$--$(95)$ by swapping $x$ and $y$.

If $\lambda_1=\lambda_2=1$, then $H$ is isomorphic to one of the Hopf algebras described in $(94)$--$(95)$. Indeed, consider the translation $y\mapsto x+y$, it is easy to see that $H$ is isomorphic to the Hopf algebra defined by
\begin{gather*}
g^2=1,\quad h^{2}=1,\quad  gx-xg=g(1-g), \quad gy-yg=0,\quad
hx-xh=\lambda_3h(1-g),\\ hy-yh=(\lambda_3+\lambda_4)h(1-g ),\quad
x^2-x=0,\quad y^2=0,\quad xy-yx+y=0.
\end{gather*}
If $\lambda_3+\lambda_4=0$, then $H$ is isomorphic to the Hopf algebra described in $(94)$. If $\lambda_3+\lambda_4\neq 0$, then by rescaling $y$, $H$ is isomorphic to the Hopf algebra described in $(95)$.

\textbf{Claim:} $\HH_{n}(\lambda)\cong\HH_{n}(\gamma)$ for $n\in\I_{12,13}$, if and only if, $\lambda=\gamma+i$ for $i\in\I_{0,1}$.

Assume that $x\in V_{g}^{\epsilon}, y\in V_{h}^{\epsilon}$. Then by Lemma \ref{lem:p4-xg1yhu}, the defining relations of $H$ are
\begin{gather*}
gx-xg=\lambda_1g(1-g),\quad hx-xh=\lambda_2h(1-g), \quad x^2-\lambda_1x=0\\
gy-yg=\lambda_3g(1-h ), \quad hy-yh=\lambda_4h(1-h ),\quad y^2- \lambda_4y=0,\\
 xy-yx-\lambda_3x+ \lambda_2y=\lambda_5(1-gh ).
\end{gather*}
for some $\lambda_1,\lambda_4\in\I_{0,1}$, $\lambda_2,\lambda_3,\lambda_5\in\K$. The verifications of $(a^2)b=a(ab),a(b^2)=(ab)b$ for $a,b\in\{g,h,x,y\}$ and $a(xy)=(ax)y$ for $a\in\{g,h\}$ amounts to the conditions
\begin{align*}
(\lambda_1+\lambda_2)\lambda_3=(\lambda_1+\lambda_2)\lambda_2=(\lambda_1+\lambda_2)\lambda_5=0,\\
(\lambda_3-\lambda_4)\lambda_3=(\lambda_3-\lambda_4)\lambda_2=(\lambda_3-\lambda_4)\lambda_5=0.
\end{align*}
Then by the Diamond lemma, $\dim H=16$.

If $\lambda_1=0=\lambda_4$, then $\lambda_2=0=\lambda_3$ and hence we can take $\lambda_5\in\I_{0,1}$ by rescaling $x$, which gives two classes of $H$ described in $(96)$--$(97)$.
%

If $\lambda_1-1=0=\lambda_4$, then $\lambda_2^2=\lambda_2$, $\lambda_3=0$ and $(\lambda_2-1)\lambda_5=0$. Hence we can take $\lambda_2,\lambda_5\in\I_{0,1}$ by rescaling $x,y$. If $\lambda_2=0$, then $\lambda_5=0$, which gives one class of $H$ described in $(98)$. If $\lambda_2=1$, then we can take $\lambda_5=0$ via the linear translation $y\mapsto y-\lambda_5(1-h)$, which gives one class of $H$ described in $(99)$.
%

If $\lambda_1=0=\lambda_4-1$, then we obtain two classes of $H$ described in in $(98)$--$(99)$  via the linear translation $g\mapsto h, h\mapsto g, x\mapsto y,y\mapsto x$.

If $\lambda_1=1=\lambda_4$, then $\lambda_2=\lambda_3\in\I_{0,1}$ and $(1+\lambda_2)\lambda_5=0$. If $\lambda_2=0=\lambda_3$, then $\lambda_5=0$, which gives one class of $H$ described in $(100)$. If $\lambda_2=\lambda_3=1$, then we can take $\lambda_5=0$ via the linear translation $y\mapsto y-\lambda_5(1-h)$, which gives one class of $H$ described in $(101)$.

Assume that $V$ is an indecomposable object in  ${}_{\Z_2\times \Z_2}^{\Z_2\times \Z_2}\mathcal{YD}$. Then $ \gr H=\K\langle g,h,x,y\rangle$, subject to the relations
\begin{gather*}
  g^2=h^2=x^2=y^2=1,[g,x]=[h,x]=[g,h]=0,gy=(y+x)g,  hy=(y+\lambda x)h,
\end{gather*}
with $g,h\in\G(\gr H),x,y\in\Pp_{g^kh^l}(\gr H)$, where $(k,l,\lambda)\in\{(0,0,\lambda),(0,1,0),(1,1,1)\}$. It is easy to see that $\gr H$ with $(k,l,\lambda)\in\{(0,1,0),(1,1,1)\}$ are isomorphic. Hence we can take $(k,l,\lambda)\in\{(0,0,\lambda),(0,1,0)\}$.  By similar computations as before, we have
\begin{gather*}
gx-xg=\lambda_1g(1-g^kh^l),\quad gy-(y+x)g=\lambda_2g(1-g^kh^l);\\
hx-xh=\lambda_3h(1-g^kh^l),\quad hy-(y+\lambda x)h=\lambda_4h(1-g^kh^l).
\end{gather*}

If $(k,l,\lambda)=(0,0,\lambda)$, then $\Pp(H)=\K\{x,y\}$ and $x^2,y^2,[x,y]\in\Pp(H)$. Hence $H\cong\K[\Z_4]\sharp U(\Pp(H))$, where $U(\Pp(H))$ is the restricted universal enveloping algebra of   $\Pp(H)$. Then by \cite[Theorem 7.4]{W1}, we obtain five classes of $H$ described in $(102)$--$(106)$.

If $(k,l,\lambda)=(0,1,0)$, then it follows by a direct computation that $x^2-\lambda_3x,y^2-\lambda_4y,xy-yx-\lambda_4x+\lambda_3y\in\Pp(H)$. Therefore, the defining relations of $H$ are
\begin{gather*}
g^2=h^2=1,\quad gh=hg,\quad gx-xg=\lambda_1g(1-h),\quad gy-(y+x)g=\lambda_2g(1-h),\\
hx-xh=\lambda_3h(1-h),\quad hy-yh=\lambda_4h(1-h),\\ xy-yx-\lambda_4x+\lambda_3y=0,\quad  x^2-\lambda_3x=0,\quad y^2-\lambda_4y=0.
\end{gather*}
The verifications of $(a^2)b=a(ab),(ab)b=a(b^2)$ for $a,b\in\{g,h,x,y\}$ and $a(xy)=(ax)y$ for $a\in\{g,h\}$ amounts to the conditions
\begin{gather*}
\lambda_1=0=\lambda_3.
\end{gather*}
By Diamond Lemma, $\dim H=16$. We can take $\lambda_2=0$ via the linear translation $x\mapsto x+\lambda_2(1-h)$ and take $\lambda_4\in\I_{0,1}$ by rescaling $x,y$, which gives two classes of $H$ described in $(107)$--$(108)$.
\subsection{Coradical $\K[\Z_2]$}
Then by Lemma \ref{lem:cyclic-groups-dimV=3}, $V\cong M_{i,1}\oplus M_{j,1}\oplus M_{k,1}$ for $i,j,k\in\I_{0,p-1}$  or $ M_{0,1}\oplus M_{0,2}$ and hence $\BN(V)\cong\K[x,y,z]/(x^p,y^p,z^p)$.

 Assume that $V\cong M_{i,1}\oplus M_{j,1}\oplus M_{k,1}$ for $i,j,k\in\I_{0,1}$. Then
\begin{align*}
\gr H=\K\langle g,x,y,z\mid g^2=1,[g,x]=[g,y]=[g,z]=x^2=y^2=z^2=[x,y]=[x,z]=[y,z]=0\rangle,
\end{align*}
with $g\in\G(H)$, $x\in\Pp_{1,g^i}(H)$, $y\in\Pp_{1,g^j}(H)$ and $z\in\Pp_{1,g^k}(H)$.
Up to isomorphism, we may assume that $(i,j,k)=(0,0,0)$, $(1,1,1)$, $(1,1,0)$ and $(1,0,0)$.

Assume that $(i,j,k)=(0,0,0)$.  Then $H\cong\K[\Z_2]\otimes U(\Pp(H))$, where $U(\Pp(H))$ is the restricted universal enveloping algebra of $\Pp(H)$. Then by \cite[Theorem 1.4]{NWW1}, we obtain fourteen classes of $H$ described in $(109)$--$(122)$.

Assume that $(i,j,k)=(1,1,1)$. Then by Lemma \ref{lem:p4-x1y1z1}, the defining relations of $H$ are
\begin{gather*}
g^2=1,\quad gx-xg=\lambda_1g(1-g),\quad  gy-yg=\lambda_2g(1-g),\quad gz-zg=\lambda_3g(1-g),\\
x^2-\lambda_1x=0,\quad y^2-\lambda_2y=0,\quad z^2-\lambda_3z=0,\quad
xy-yx-\lambda_2x+\lambda_1y=0,\\ xz-zx-\lambda_3x+\lambda_1z=0,\quad  yz-zy-\lambda_3y+\lambda_2z=0.
\end{gather*}
for $\lambda_1,\lambda_2,\lambda_3\in\I_{0,2}$. Let $H(\lambda_1,\lambda_2,\lambda_3):=H$ for convenience. We claim that $H(1,0,0)\cong H(1,1,0)$. Indeed, consider the algebra map $\phi:H(1,0,0)\rightarrow H(1,1,0), g\rightarrow g,x\rightarrow x,y\rightarrow x+y, z\rightarrow z$. Then it is easy to see $\phi$ is a Hopf algebra epimorphism and $\phi|_{(H(1,0,0))_1}$ is injective, which implies that the claim follows. Similarly, $H(1,1,1)\cong H(1,1,0)\cong H(1,0,0)$. Observe that $H(0,0,0)$ is commutative and $H(1,0,0)$ is not commutative. Hence $H\cong H(0,0,0)$ or $H(1,0,0)$ described in $(123)$ or $(124)$.

Assume that $(i,j,k)=(1,1,0)$. Then by Lemma \ref{lem:p4-x1y1z0}, the defining relations of $H$ are
\begin{gather*}
g^2=1,\quad gx-xg=\lambda_1g(1-g),\quad gy-yg=\lambda_2g(1-g),\quad gz-zg=0,\\
x^2-\lambda_1x=\lambda_3z,\quad y^2-\lambda_2y=\lambda_4z,\quad z^2=\lambda_5z,\\
xz-zx=\gamma_1x+\gamma_2y+\gamma_3(1-g),\quad yz-zy=\gamma_4x+\gamma_5y+\gamma_6(1-g),\\
xy-yx-\lambda_2x+\lambda_1y=\lambda_6z.
\end{gather*}
for $\lambda_1,\lambda_2,\lambda_5\in\I_{0,1}$ and $\lambda_3,\lambda_4,\lambda_6,\gamma_1,\cdots,\gamma_6\in\K$ with the  conditions  given by \eqref{eq:x1y1z0-1}--\eqref{eq:x1y1z0-7}.

Suppose that $\lambda_1=0=\lambda_2$. Then by rescaling $x,y$, we can take $\lambda_3,\lambda_4\in\I_{0,1}$.

If $\lambda_3=0=\lambda_4$, then  $\lambda_6\gamma_i=0$ for all $i\in\I_{0,1}$ and by rescaling $x$, we can take $\lambda_6\in\I_{0,1}$.

If $\lambda_6=1$, then $\gamma_i=0$ for all $i\in\I_{1,6}$, that is, $[x,z]=0=[y,z]$ in $H$. Then $H$ depends on $\lambda_6\in\I_{0,1}$, that is, $H$ is isomorphic to one of the Hopf algebras described in $(125)$--$(126)$.

If $\lambda_6=0=\lambda_5$, then $\gamma_1^2=\gamma_2\gamma_4=\gamma_5^2$, $\gamma_5\gamma_6=\gamma_3\gamma_4$, $\gamma_1\gamma_3=\gamma_2\gamma_6$, $(\gamma_1-\gamma_5)\gamma_2=0=(\gamma_1-\gamma_5)\gamma_4$ and by rescaling $x,y$, we can take $\gamma_2,\gamma_4\in\I_{0,1}$. If $\gamma_2=0=\gamma_4$, then $\gamma_1=0=\gamma_5$ and we can take $\gamma_3,\gamma_6\in\I_{0,1}$. Let $H(\gamma_3,\gamma_6):=H$ for convenience. It is easy to see that $H(0,1)\cong H(1,0)$ by swapping $x$ and $y$ and $H(1,1)\cong H(1,0)$ via the linear translation $y\mapsto y+x$. Observe that $H(0,0)$ is commutative while $H(1,0)$ is not commutative. Therefore, $H$ is isomorphic to one of the Hopf algebras described in $(127)$--$(128)$.
If $\gamma_2-1=0=\gamma_4$, then $\gamma_1=\gamma_5=\gamma_6=0$ and hence we can take $\gamma_3=0$ via the linear translation $y\mapsto y+\gamma_3(1+g)$, which gives one class of $H$ described in $(129)$.
If $\gamma_2=0=\gamma_4-1$, then $H$ is isomorphic to the Hopf algebra described in $(129)$ by swapping $x$ and $y$. If $\gamma_2=1=\gamma_4$,  then $H$ is isomorphic to the Hopf algebra described in $(129)$ via the linear translation $y\mapsto y+x$.

If $\lambda_6=0=\lambda_5-1$, then $(1-\gamma_1)\gamma_1=\gamma_2\gamma_4=(1-\gamma_5)\gamma_5$, $(1+\gamma_1+\gamma_5)\gamma_2=0=(1+\gamma_1+\gamma_5)\gamma_4$, $(1-\gamma_1)\gamma_3=\gamma_2\gamma_6$, $(1-\gamma_5)\gamma_6=\gamma_3\gamma_4$. If $\gamma_2=0=\gamma_4$, then $\gamma_1,\gamma_5\in\I_{0,1}$, $(1-\gamma_1)\gamma_3=0=(1-\gamma_5)\gamma_6$. Moreover, we can take $\gamma_3=0=\gamma_6$. Indeed, if $\gamma_1=0$ or $\gamma_5=0$, then $\gamma_3=0$ or $\gamma_6=0$; if $\gamma_1=1$ or $\gamma_5=1$, then we can take $\gamma_3=0$ or $\gamma_6=0$ via the linear translation $x:=x+\gamma_3(1-g)$ or $y:=y+\gamma_6(1-g)$. Observe that the Hopf algebras with $\gamma_1-1=0=\gamma_5$ and $\gamma_1=0=\gamma_5-1$ are isomorphic by swapping $x$ and $y$. Then $H$ is isomorphic to one of the Hopf algebras described in $(130)$--$(132)$.
If $\gamma_2-1=0=\gamma_4$, then $\gamma_1,\gamma_5\in\I_{0,1}$, $\gamma_1+\gamma_5=1$, $(1-\gamma_1)\gamma_3=\gamma_6$, $(1-\gamma_5)\gamma_6=0$. If $\gamma_1=1$, then $\gamma_5=0=\gamma_6$ and hence $H$ is isomorphic to the Hopf algebra described in $(131)$ via the linear translation $x\mapsto x+y+\gamma_3(1-g)$. If $\gamma_1=0$, then $\gamma_5=1$, $\gamma_3=\gamma_6$ and hence $H$ is isomorphic to the Hopf algebra described in $(131)$ via the linear translation $x\mapsto y+\gamma_3(1-g),y\mapsto x+y+\gamma_3(1-g)$. Similarly, if $\gamma_2=\gamma_4-1$ or $\gamma_2=1=\gamma_4$, $H$ is isomorphic to the Hopf algebra described in $(131)$.

If $\lambda_3-1=0=\lambda_4$, then $\gamma_i=0$ for all $i\in\I_{1,6}$ and hence $H$ is isomorphic to one of the Hopf algebras described in $(133)$--$(136)$.
If $\lambda_3=0=\lambda_4-1$ or $\lambda_3=1=\lambda_4$, then similar to the last case, $H$ is isomorphic to one of the Hopf algebra described in $(133)$--$(136)$.

Suppose that $\lambda_1-1=0=\lambda_2$. Then $\gamma_1=0=\gamma_4$ and by rescaling $y$, we can take $\lambda_4\in\I_{0,1}$.

If $\lambda_4=0$, then $\lambda_6\gamma_i=0$ for all $i\in\I_{1,6}-\{3\}$ and by rescaling $y$, we can take $\lambda_6\in\I_{0,1}$. Observe that $\gamma_3=\lambda_3\gamma_6$ and $\lambda_3\gamma_3=0$.  If $\lambda_6=1$, then $\gamma_i=0$ for all $i\in\I_{1,6}$ and we can take $\lambda_3=0$ via the linear translation $x\mapsto x-\lambda_3 y$. Therefore, we obatin two classes of $H$ described in $(137)$--$(138)$.

If $\lambda_6=0=\lambda_5$, then $\lambda_3\gamma_i=0$ for all $i\in\I_{1,6}$. If $\lambda_3=0$, then $\gamma_5=0$,  $\gamma_2\gamma_6=0$ and by rescaling $y,z$, we can take $\gamma_2,\gamma_6\in\I_{0,1}$. If $\gamma_2=0$, then  we can take $\gamma_3\in\I_{0,1}$. Let $H(\gamma_3,\gamma_6):=H$ for convenience. Then it is easy to see that $H(1,1)\cong H(0,1)$ via the linear translation $x\mapsto x+y$. Therefore, $H$ is isomorphic to one of the Hopf algebras described in $(139)$--$(141)$.
If $\gamma_2=1$, then $\gamma_6=0$ and hence we can take $\gamma_3=0$ via the linear translation $y\mapsto y+\gamma_3(1-g)$, which gives one class of $H$ described in $(142)$.
If $\lambda_3\neq 0$, then $\gamma_i=0$ for all $i\in\I_{1,6}$ and we can take $\lambda_3=1$ by rescaling $z$, which gives two classes of $H$ described in $(143)$.

If $\lambda_6=0=\lambda_5-1$, then $\lambda_3\gamma_5=0$, $(1-\gamma_5)\gamma_5=0$, $(1+\gamma_5)\gamma_2=0$, $\gamma_3=\gamma_2\gamma_6$, $(1-\gamma_5)\gamma_6=0$.
If $\gamma_5=1$, then $\lambda_3=0$, $\gamma_3=\gamma_2\gamma_6$ and we can take $\gamma_6=0=\gamma_3$ via the linear translation $y\mapsto y+\gamma_6(1-g)$. Indeed, if $\gamma_2=0$, then $\gamma_3=0$; if $\gamma_2\neq 0$, then $\gamma_3=\gamma_2\gamma_6$ and hence the translation is well-defined. Then $H$ is isomorphic to the Hopf algebra described as follows:
\begin{itemize}
\item $\K\langle g,x,y,z\rangle/(g^2-1,[g,x]-g(1-g),[g,y],[g,z],[x,y]-y,[x,z]-\gamma_2y,[y,z]-y,x^2-x,y^2,z^2-z)$.
\end{itemize}
We can take $\gamma_2=0$ via the linear translation $x\mapsto x+\gamma_2y$. Indeed, it follows by a direct computation that the translation is a well-defined Hopf algebra isomorphism. Therefore, $H$ is isomorphic to the Hopf algebra described in $(144)$.
If $\gamma_5=0$, then $\gamma_2=0=\gamma_3=\gamma_6$, and hence $H\cong\HH_{14}(\lambda_3)$ described in $(145)$.

If $\lambda_4=1$, then $\gamma_i=0$ for $i\in\I_{1,6}$. If $\lambda_5=0=\lambda_6$, then  we can take $\lambda_3=0$ via the linear translation $x\mapsto x+\alpha y$ satisfying $\alpha^2=\lambda_3$, which gives one class of $H$ described in $(146)$.
If $\lambda_5=0$ and $\lambda_6\neq 0$, then by rescaling $y,z$, we can take $\lambda_6=1$. Moreover, we can take $\lambda_3=0$ via the linear translation $x\mapsto x+\alpha y$ satisfying $\alpha^2+\alpha=\lambda_3$, which gives one class of $H$ described in $(147)$.
If $\lambda_5=1$, then  we can take $\lambda_3=0$ via the linear translation $x\mapsto x+\alpha y$ satisfying $\alpha^2+\lambda_6\alpha=\lambda_3$ and hence $H\cong\HH_{15}(\lambda_6)$ described in $(148)$.

Suppose that $\lambda_1=0=\lambda_2-1$ or $\lambda_1=1=\lambda_2$. Then it can be reduced to the case $\lambda_1-1=0=\lambda_2$ by swapping $x$ and $y$ or  via the linear translation $y\mapsto x+y$, respectively.

\textbf{Claim:} $\HH_{14}(\lambda)\cong\HH_{14}(\gamma)$ or $\HH_{15}(\lambda)\cong\HH_{15}(\gamma)$, if and only if, $\lambda=\gamma$.

Suppose that $\phi:\HH_{15}(\lambda)\rightarrow \HH_{15}(\gamma)$ for $\lambda,\gamma\in\K$ is a Hopf algebra isomorphism. Then $\phi|_{\Z_2}:\Z_2 \rightarrow \Z_2$ is an automorphism.   Write $g^{\prime},x^{\prime},y^{\prime},z^{\prime}$ to distinguish the generators of $\HH_{15}(\gamma)$.
Since spaces of skew-primitive elements of $\HH_{15}(\gamma)$ are trivial except $\Pp_{1, g^{\prime} }(\HH_8(\gamma))=\K\{x^{\prime},y^{\prime}\}\oplus \K\{1- g^{\prime} \}$ and $\Pp(\HH_{15}(\gamma))=\K\{z^{\prime}\}$, it follows that
$$\phi(g)=g^{\prime}, \quad \phi(x)=\alpha_1x^{\prime}+\alpha_2y^{\prime}+\alpha_3(1-g^{\prime}),\quad\phi(y)=\beta_1x^{\prime}+\beta_2y^{\prime}+\beta_3(1-g^{\prime}),\quad \phi(z)=kz^{\prime}$$ for some $\alpha_i,\beta_i, k\in\K$ and $i\in\I_{1,3}$. Then applying $\phi$ to the relations $gx-xg=g(1-g)$, $z^2=z$, $x^2=x$ and $[g,y]=0$, we have
$$\alpha_1=0,\quad k=1,\quad \alpha_2^2+\alpha_2\gamma=0,\quad \beta_1=0.$$
Then applying $\phi$ to the relation $[x,y]-y-\lambda z$, we have
\begin{align*}
\lambda=\gamma.
\end{align*}
 Conversely, it is easy to see that   $\HH_{15}(\lambda)\cong\HH_{15}(\gamma)$ if  $\lambda=\gamma$. Similarly, $\HH_{14}(\lambda)\cong\HH_{14}(\gamma)$  if and only if  $\lambda=\gamma$.

Assume that $(i,j,k)=(1,0,0)$. Then by Theorem \ref{thm:p4-x1y0z0}, $H$ is isomorphic to one of the Hopf algebras described in $(149)$--$(183)$.

Assume that $V\cong   M_{0,1}\oplus M_{0,2}$. Then $\gr H=\K\langle g,x,y,z\rangle$, subject to the relations
\begin{gather*}
g^2=x^2=y^2=z^2=1,\quad [g,x]=[g,y]=[x,y]=[x,z]=[y,z]=0,\quad gz-(z+y)g=0,
\end{gather*}
with $g\in\G(\gr H),x,y,z\in\Pp(\gr H)$. It follows by a direct computation that
\begin{gather*}
gx-xg=0,\quad gy-yg=0,\quad gz-(z+y)g=0;\\
x^2,y^2,z^2,[x,y],[x,z],[y,z]\in\Pp(H).
\end{gather*}
Then $H\cong\K[\Z_2]\sharp U(\Pp(H))$, where $U(\Pp(H))$ is the restricted universal enveloping algebra of $\Pp(H)$. Then by \cite[Theorem 1.4]{NWW1}, we obtain fourteen classes of $H$ described in  $(184)$--$(197)$.

\vskip10pt \centerline{\bf ACKNOWLEDGMENT}

\vskip10pt The essential part of this article was written during the visit of the author to University of Padova supported by China Scholarship Council. The author is partially supported by the NSFC (Grant No. 11771142,11926353). The author would like to thank his  supervisors Profs. G. Carnovale, N. Hu and Prof. G. A. Garcia so much for the  help and  encouragement. The author thanks the referee for careful reading and helpful comments.

\end{document}